\newcommand{\righthalfcup}{\mathbin{\mathlarger{\lrcorner}}}
\newtheorem{theorem}{Theorem}[section]
\newtheorem{corollary}{Corollary}[theorem]
\newtheorem{lemma}[theorem]{Lemma}
\newtheorem{proposition}[theorem]{Proposition}
\newtheorem{definition}[theorem]{Definition}
\newtheorem{example}[theorem]{Example}
\newtheorem{remark}[theorem]{Remark}
\newtheorem{construction}[theorem]{Construction}
\theoremstyle{definition}
\crefname{thm}{Thm}{Thms}
\newcounter{examplecounter}[section]
\renewcommand{\theexamplecounter}{\arabic{section}.\arabic{examplecounter}}
\newcommand{\Rip}{\mathcal{R}}
\newcommand{\bcd}{\mathsf{bcd}}
\journal{Advances in Applied Mathematics}
\begin{document}

\begin{frontmatter}
\author[label1]{Keunsu Kim}
\ead{keunsu@postech.ac.kr}

\author[label2,label3]{Jae-Hun Jung\corref{cor1}}
\ead{jung153@postech.ac.kr}
\cortext[cor1]{Corresponding author.}

\affiliation[label1]{organization={Institute of Mathematics for Industry, Kyushu University},
            city={Fukuoka},
            postcode={819-0395},
            country={Japan}}

\affiliation[label2]{organization={Department of Mathematics, Pohang University of Science \& Technology},
            city={Pohang},
            postcode={37673},
            country={Korea}}

\affiliation[label3]{organization={POSTECH Mathematical Institute for Data Science (MINDS)},
            city={Pohang},
            postcode={37673},
            country={Korea}}

\title{Exact multi-parameter persistent homology of time-series data: Fast and variable topological inferences}


\pagenumbering{arabic}

\begin{abstract}
We propose the Exact Multi-Parameter Persistent Homology (EMPH) method for the topological analysis of time-series data based on the Liouville torus. Assuming, as in Takens' embedding, that a time-series represents observations of an underlying dynamical system, we model the system as a Hamiltonian system of uncoupled one-dimensional harmonic oscillators. Under this setting, the Liouville torus arises naturally as a dynamical object, and the persistent homology of the Vietoris–Rips complex built on this torus can be interpreted through Fourier analysis. EMPH constructs a multi-parameter filtration framework using Fourier decomposition and provides a closed-form expression for the fibered barcode, an invariant obtained by restricting multi-parameter persistent homology along a specific ray. This formulation establishes a direct correspondence between the choice of a ray and the weighting of Fourier modes, enabling variable topological inferences by exploring different rays in the filtration space. Compared with conventional sliding window based analysis of time-series data, which is computationally expensive, EMPH yields exact barcode formulas with the symmetry of the Liouville torus, achieving much lower computational cost while maintaining comparable or superior accuracy. Thus, EMPH offers both computational efficiency and interpretive flexibility, bridging Fourier analysis and multi-parameter persistent homology in time-series data analysis.
\end{abstract}

\begin{keyword}

Topological data analysis, Multi-parameter Persistent homology, Time-series data, Fourier transform, Liouville torus
\MSC[2020]{55N31, 37M10}
\end{keyword}

\end{frontmatter}

\begin{section}{Introduction}

Topological Data Analysis (TDA) is a recent development in modern data science that utilizes the topological features of the given data. Contrary to traditional approaches such as the statistical methods, TDA rather tries to understand the given data by revealing the  topological and geometrical structures of the data.  To extract topological features from the given data, we consider so-called the filtered simplicial complex and record the change of its homology with  scale. That is, instead of fixing the scale for the construction of the complex out of the given data points, TDA measures the homological invariants to each scale. This way, we see how the topological properties of the given data evolve with scale. As the changes with respect to scale are  summarized through TDA, the local and global structures of the given data can be concisely visualized and used for characterizing the given data.

Persistent homology, more generally referred to as a persistence module, concisely summarizes the evolution of topological features across scales. Its output is typically visualized as a barcode or, equivalently, a persistence diagram. The map that transforms data into a barcode is piecewise-defined and generally non-differentiable \cite{leygonie2022framework}. Even a slight perturbation of the input data can reach boundary conditions (e.g., equal edge lengths in a Vietoris–Rips complex), where insertion orders and pairings change, thereby altering the very expression of this transformation. Consequently, no closed-form formula expressed as a combination of elementary functions exists. Only in very special settings, such as a circle \cite{adamaszek2017vietoris} or a regular polygon whose number of sides is a multiple of six (Theorem \ref{6k barcode}), can restricted exact formulas be derived.

Data are, in general, defined with several parameters, so one-parameter filtration may be insufficient to analyze the structure of data \cite{carlsson2009theory}. For this reason multi-parameter persistent homology theory seems necessary and researchers have tried to develop its full theory. For one-parameter filtration, pointwise finite dimensional persistent module can be uniquely decomposed by half-open intervals, and the barcode is a complete invariant in persistence module category \cite{botnan2020decomposition}. In contrast, for  multi-parameter persistent homology, it is highly complicated to define complete invariant \cite{carlsson2009theory, lesnick2015theory, botnan2022introduction}. To resolve this problem, we may relax the condition of the completeness. Although rank invariant is not a complete invariant for  multi-parameter persistent homology, it can capture a persistence of homological class  as a practical invariant and is equivalent to barcode in one-parameter filtration \cite{carlsson2009theory}. Rank invariant is known to be equivalent to fibered barcode that is a collection of one-dimensional reduction of multi-parameter persistent homology  \cite{cerri2013betti, botnan2022introduction}. With fibered barcode, one could use the considered vectorization of (incomplete) multi-parameter persistent homology, e.g. multi-parameter persistence kernel \cite{corbet2019kernel}, multi-parameter persistence landscape \cite{vipond2020multiparameter} and multi-parameter persistence image \cite{carriere2020multiparameter}. In \cite{carriere2020multiparameter}, 
using two image data sets, i.e. the intensity images of immune cells and cancer cells, two-parameter sublevel filtration was constructed to predict the survival rate of the breast cancer patients. It was shown, for this example,  that multi-parameter persistence theory helps us to capture the interaction patterns of multiple phenotypes at once.

A common TDA method for analyzing time-series data involves translating the data into a point cloud using sliding window embedding, as proposed in \cite{skraba2012topological} and theoretically studied in \cite{perea2015sliding}. In \cite{perea2015sliding}, the authors provide several properties of sliding window embedding: (i) Sliding window embedding translates a trigonometric polynomial into a closed curve on an $N$-torus, where $N$ represents the degree of the trigonometric polynomial used. Specifically, it converts a sinusoidal function into an elliptic curve (planar curve).  (ii) With respect to the bottleneck distance (the standard metric on barcodes), the barcode of the Vietoris–Rips complex of the sliding window embedding of a periodic time-series is approximated by that of its truncated Fourier series (see Theorem \ref{truncated approximation}). And (iii) the minimum embedding dimension required to preserve geometric information is demonstrated. Based on these results, the authors propose the periodicity score, a metric for measuring the periodicity of the given time-series data. This approach includes the following processes and utilizes the information from the barcode:

\[
\begin{array}{ccccc}
f & \longrightarrow & \Rip_{\epsilon}(SW_{M,\tau}f) & \longrightarrow & \mathsf{bcd}_{n}^{\Rip}(SW_{M,\tau}f) \\
\text{(Time-series data)} &&  \text{(Vietoris-Rips complex of the sliding window embedding)} && \text{(Barcode)}
\end{array}
\]

The Liouville torus is an object in a complete integrable Hamiltonian system. Loosely speaking, a complete integrable Hamiltonian system is a Hamiltonian system that has as many independent invariants as possible. In such a system, a particle’s trajectory should be confined to an $n$-torus, where $n$ is the number of maximal independent invariants and such a torus is known as the Liouville torus. To provide a rationale for analyzing the Liouville torus in TDA, we review Takens' embedding theorem in Section \ref{Continuosization section}.

Our main idea is to transform time-series data into a barcode, through the Liouville torus without utilizing sliding window embedding. Given time-series data $f$, denote its Liouville torus as $\Psi_f$ (Definition \ref{def:Liouville torus}). We view $\Psi_f$ as a product of $N$ circles (Corollary~\ref{cor:Liouville isomorphic}) and write $\pi_i$ for the $i$-th coordinate projection onto the $i$-th circle. We then proceed as follows, in analogy with the sliding window embedding:
\[
\begin{array}{ccccc}
f & \longrightarrow & \Rip_{\epsilon}(\Psi_f) =  \Rip_{\epsilon}(\pi_1 \Psi_f) \times \cdots \times \Rip_{\epsilon}(\pi_N \Psi_f) & \longrightarrow & \mathsf{bcd}_{n}^{\Rip}(\Psi_{f}) \\
\text{(Time-series data)} &&  \text{(Vietoris-Rips complex of the Liouville torus)} && \text{(Barcode)}
\end{array}
\]
\noindent

Furthermore, in Section~\ref{Sec : Application of multi-parameter theory and its interpretation}, 
we extend the one-parameter filtration 
$\Rip_{\epsilon}(\pi_1 \Psi_f)\times \cdots \times \Rip_{\epsilon}(\pi_N \Psi_f)$ 
to an $N$-parameter filtration 
$\Rip_{\epsilon_1}(\pi_1 \Psi_f)\times \cdots \times \Rip_{\epsilon_N}(\pi_N \Psi_f)$. 
Instead of working on the entire parameter space $\mathbb{R}^N$, 
we fix a ray $\ell \subset \mathbb{R}^N$ and consider the one-parameter filtration induced along $\ell$, 
for which the barcode $\mathsf{bcd}_{n}^{\Rip, \ell}(\Psi_{f})$ is well-defined. 
This procedure can be summarized as follows:
\[
\begin{array}{ccccc}
f & \longrightarrow & 
\Rip_{\epsilon_1}(\pi_1 \Psi_f) \times \cdots \times \Rip_{\epsilon_N}(\pi_N \Psi_f) 
& \longrightarrow & 
\mathsf{bcd}_{n}^{\Rip, \ell}(\Psi_{f}) \\
\text{(Time-series data)} && \text{(Multi-parameter filtration of the Liouville torus)} && \text{(Barcode)}
\end{array}
\]

From this approach, we can naturally raise the following questions:
\begin{enumerate}
\item What information from time-series data is encoded in the barcode?
\item What are the benefits of this approach?
\end{enumerate}

The main results of our study presented in this paper are the followings:

\begin{enumerate}
    \item The exact formula of the barcode of the Liouville torus $\Psi_f$, which contains the sliding window embedding of the given time-series data $f$ is obtainable and interpretable. The results are provided in Section \ref{sec : Exact formula and interpretation of barcode}. 
    
    \begin{enumerate}
    \item Sliding window embedding of periodic time-series data can be formulated by the trajectory of uncoupled one-dimensional harmonic oscillators. (Theorem \ref{thm:sliding comparison})
   \item The barcode of $\Psi_f$ is given by the following formula $$\mathsf{bcd}_{n}^{\Rip}(\Psi_{f}) = 	\left\{ J_1^{n_1} \bigcap \cdots \bigcap J_N^{n_N} : J_L^{n_{L}} \in \mathsf{bcd}_{n_L}^{\Rip}	\left(\pi_{L} \Psi_{f}\right) \ \text{and} \ \sum\limits_{L=1}^{N} n_L = n \right\},$$ \\
    i.e. $J_{L}^{n} = \begin{cases} (0,\infty), & \mbox{if }n=0 \\ \left(2r_L^f \sin\left(\pi {k \over 2k+1}\right), 2r_L^f \sin\left(\pi {k+1 \over 2k+3}\right)\right], & \mbox{if }n=2k+1 \\ \ \emptyset, & \mbox{otherwise}
\end{cases}$.  (Theorem \ref{cor:barcode})
    
 \item Each bar in $\mathsf{bcd}_{n}^{\Rip}(\Psi_{f})$ corresponds to a bar of the projected point cloud onto 
$P_{i_1} \oplus \cdots \oplus P_{i_k}$ for $k = 1, \dots, n$, where each $P_L$ is the two-dimensional subspace onto which the sinusoidal functions of frequency $L$ are mapped under the sliding window embedding, and $\sum\limits_{\substack{L=1 \\ n_{i_L} \in \mathbb{N}}}^{k} n_{i_L} = n$. 
That is,
\(
\mathsf{bcd}_{n}^{\Rip}(\Psi_{f}) 
= 
\bigcup\limits_{1 \le i_1 < \cdots < i_k \le N} 
\bigcup\limits_{1 \le k \le n} 
\mathsf{bcd}_{n}^{\Rip}\left( \pi_{i_1 \cdots i_k} \Psi_{f} \right).
\)
(Theorem~\ref{barcode meaning})

    \item 
In machine learning frameworks, combinatorial properties are often needed. For example, Deep Sets \cite{zaheer2017deep} and RipsNet \cite{de2022ripsnet} are such examples. Since a barcode is a combinatoric object, we can provide a combinatorial perspective on time-series data from the barcode (Proposition \ref{prop:permutation symmetric}).
    
    \end{enumerate} 
    
    \item We propose a multi-parameter persistent homology method based on the filtration with Fourier bases with the exact barcode (Section \ref{Sec : Application of multi-parameter theory and its interpretation}).
    The Fourier bases constitute the multi-parameter filtration space. The exact barcode to each Fourier mode is precomputed and the actual barcode is then calculated with the Fourier coefficient of the corresponding Fourier mode. 

    \begin{enumerate}
    \item If a ray $\ell$ in the filtration space has the direction vector $\mathbf{a} = (a_{1},\cdots , a_{N})$ with each component $a_i > 0$, and the endpoint $\mathbf{b} = (b_1, \cdots , b_N)$, then the barcode is given by the following: $$\mathsf{bcd}_{n}^{\Rip,\ell}(\Psi_{f}) = \left\{ J_{1}^{n_1, \ell}\bigcap \cdots \bigcap J_{N}^{n_N, \ell} :  J_{L}^{n_L, \ell} \in  \mathsf{bcd}_{n_L}^{\Rip,\ell} 	\left( \pi_L \Psi_{f}\right)  \ \text{and} \ \sum\limits_{L=1}^{N} n_L = n \right\},$$ i.e. \[
J_{L}^{n, \ell} =
\begin{cases}
\left(
    \dfrac{-b_L}{\sqrt{N}\,a_L / \lVert \mathbf{a} \rVert}, \infty
\right), & \text{if } n = 0, \\[2ex]
\left(
    \dfrac{2r_L^f \sin\!\left(\pi \tfrac{k}{2k+1}\right) - b_L}
          {\sqrt{N}\,a_L / \lVert \mathbf{a} \rVert},\ 
    \dfrac{2r_L^f \sin\!\left(\pi \tfrac{k+1}{2k+3}\right) - b_L}
          {\sqrt{N}\,a_L / \lVert \mathbf{a} \rVert}
\right], & \text{if } n = 2k+1, \\[3ex]
\emptyset, & \text{otherwise}. 
\end{cases} \qquad
\text{(Theorem \ref{direction barcode})} \]

The diagonal ray is the ray with $\mathbf{a}$ of $a_1 = a_2 = \cdots = a_N$ and $\mathbf{b}$ of $(0, 0, \cdots, 0)$.

    \item Usual persistent homology is equivalent to choosing  the  diagonal ray in the multi-parameter filtration space. That is,  $\mathsf{bcd}_{n}^{\Rip,\ell}(\Psi_{f}) = \mathsf{bcd}_{n}^{\Rip}(\Psi_{f})$. (Corollary \ref{general one-parameter})

    \item {\color{black}One of the advantages of considering $\mathsf{bcd}_{n}^{\Rip,\ell}(\Psi_{f})$ is that it can provide  variable topological viewpoints. (Example \ref{ex:benefit} and Example \ref{example 2})}

    \item We construct our method on a collection of rays in multi-parameter space. With this consideration, we derive Theorem \ref{thm:collection of ray}, that is a counterpart of Example \ref{onedimension persistent}.
    
    \end{enumerate}
    \item The proposed method has several practical advantages. First, the computational complexity of our proposed method is very low. Further, variable topological inferences are possible with low computational cost in a machine learning workflow such as for the classification and clustering problems. (Section \ref{Experiment})
\end{enumerate}

The Liouville torus has more symmetry compared to sliding window embedding. It enables us to obtain the exact formula of the barcode for the given time-series data and interpret the results more intuitively. As a result, we can understand the meaning of the barcode obtained by the Liouville torus and interpret its relation with the given time-series data. Exploiting the advantages of the Liouville torus, we utilize the rank invariant of multi-parameter persistent homology, which is equivalent to restricting the multi-parameter persistent homology to a ray in the multi-parameter space.

Our method is comparable with the usual sliding window embedding method and the computational complexity is very low. For the time-series data with the length of $T$\label{not:length}, the computational complexity of calculating the barcodes of Vietoris-Rips complex through sliding window embedding is known as $O(T^{3n+3})$ \cite{chung2019brain} where $n$ is the dimension of the barcode. For the exact barcode,  however, the computational complexity of $\mathsf{bcd}_{n}^{\Rip,\ell}(\Psi_{f})$ is $O(T\log T) + O\left(N \times \binom{N+n-1}{n}\right)$, where $N(\le T)$ is the degree of the truncated trigonometric polynomial (Remark \ref{remark:computational complexity}). Due to the very low computational complexity of the proposed method, various rays can be tested almost simultaneously and variable inferences are obtainable, highly efficient when implemented in a machine learning workflow. 

This paper is composed of the following sections. In Section \ref{sec:Defintions} we provide all the definitions necessary for the analysis presented in this paper. Also, previous results that the current paper relies on are presented. \textcolor{black}{In Section \ref{sec : Exact formula and interpretation of barcode}, \textcolor{black}{we review Takens' embedding theorem and the motivation of introducing the Liouville torus. We, then, provide the exact barcode formula and its interpretation.} In Section \ref{Sec : Application of multi-parameter theory and its interpretation}, we construct multi-parameter persistent homology based on the Fourier decomposition and derive the exact barcode formula for the one-dimensional reduction of multi-parameter persistent homology, referred to as the Exact Multi-parameter Persistent Homology (EMPH).} In Section \ref{Experiment}, we present numerical examples for the classification and clustering problems. Particularly we compare several methods in terms of computational complexity and show that the proposed method is highly efficient. 
In Section \ref{conclusion}, we provide a concluding remark and future research subjects.
\end{section}

\begin{section}{Definitions and theorems}
\label{sec:Defintions}
We construct a filtration of simplicial complexes over a metric space. A popular method of such a filtration is the Vietoris-Rips complex and we are mainly interested in the Vietoris-Rips complex in this work. The following provides the definition of the Vietoris-Rips complex. 

\begin{definition}[Vietoris-Rips complex]
\label{Vietoris-Rips complex}
Let $(X,d_X)$ be a metric space. Vietoris-Rips complex $\Rip(X) = \left\{\Rip_{\epsilon}(X)\right\}_{\epsilon \in \mathbb{R}}$ is a one-parameter collection of simplicial complexes, where 
$\Rip_{\epsilon}(X) := \bigl\{ \left\{ x_{0}, \cdots, x_{n} \right\} \subset X : \bigr.$ $\bigl. \max\limits_{0 \le i,j \le n} d_{X}(x_{i},x_{j}) < \epsilon \bigr\}.$
\end{definition}
Persistent homology is a tool from TDA that tracks the birth and death of homological features across different scales in a filtration.
This information is summarized in a barcode, which records each homology class by its birth and death times.

\begin{definition}[Persistent homology and barcode]
\label{not:barcode}
Let $\mathcal{K} = \{\mathcal{K}_{\epsilon}\}_{\epsilon \in \mathbb{R}}$ be a one-parameter filtration of simplicial complexes, 
that is, $\mathcal{K}_{\epsilon} \subseteq \mathcal{K}_{\epsilon'}$ whenever $\epsilon \le \epsilon'$. 
Then the pair 
\[
\Bigl(\{H_{n}(\mathcal{K}_{\epsilon})\}_{\epsilon \in \mathbb{R}}, \;
\{\iota_{*}^{\epsilon, \epsilon'} : H_{n}(\mathcal{K}_{\epsilon}) 
\to H_{n}(\mathcal{K}_{\epsilon'})\}_{\epsilon \le \epsilon'}\Bigr)
\]
is called the $n$-dimensional persistent homology, 
where $\iota_{*}^{\epsilon, \epsilon'}$ is the map on homology induced by the inclusion 
$\iota^{\epsilon, \epsilon'}: \mathcal{K}_{\epsilon} \hookrightarrow \mathcal{K}_{\epsilon'}$. 
The $n$-dimensional barcode $\mathsf{bcd}_{n}(\mathcal{K})$ 
is a multiset of intervals $(b,d]$, called bars, where each bar represents a homology class born at $\epsilon=b$ 
and dying at $\epsilon=d$. In particular, when $\mathcal{K}=\Rip(X)$ is the Vietoris–Rips complex of a metric space $X$, 
we write the barcode as $\mathsf{bcd}_{n}^{\Rip}(X)$.
Note that we adopt the convention $(b,d]$ for bars, since in the Vietoris–Rips complex defined by $d(x,y)<\epsilon$, the resulting barcode naturally takes this form. Using $d(x,y)\le\epsilon$ yields bars of the form $[b,d)$.
\end{definition}

Note that persistent homology is uniquely represented by its barcode. 
Since the barcode characterizes the given data, the distance between two barcodes 
is used to measure their similarity. 
One standard choice is the bottleneck distance, defined as follows.

\begin{definition}[Bottleneck distance]
\label{bottleneck distance}
Let $\mathsf{bcd}_{n}(\mathcal{K}_1)$ and $\mathsf{bcd}_{n}(\mathcal{K}_2)$ be two barcodes.  
Since these barcodes may have different cardinalities, we extend each by adding the diagonal set 
$\Delta := \{(a,a] \mid a \in \mathbb{R}\}$ with infinite multiplicity.  
For $I=(b,d]$, $J=(b',d']$, and $\Delta$, set  
$d_\infty(I,J) := \max\{|b-b'|,\,|d-d'|\}$,  
$d_\infty(I,\Delta) := \tfrac{d-b}{2}$,  
$d_\infty(\Delta,J) := \tfrac{d'-b'}{2}$,  
$d_\infty(\Delta,\Delta) := 0$. The Bottleneck distance between the two barcodes is
\[
d_B\!\left(\mathsf{bcd}_{n}(\mathcal{K}_1),\, \mathsf{bcd}_{n}(\mathcal{K}_2)\right)
:= \inf_{\phi} \; \sup_{I \in \mathsf{bcd}_{n}(\mathcal{K}_1) \cup \Delta} 
             d_\infty\!\bigl(I,\phi(I)\bigr),
\]
where $\phi$ ranges over all bijections 
$\mathsf{bcd}_{n}(\mathcal{K}_1) \cup \Delta \to \mathsf{bcd}_{n}(\mathcal{K}_2) \cup \Delta$.
\end{definition}

The Vietoris-Rips complex can be used to infer a population manifold from a sampling point cloud. For example, if $M$ is a Riemannian manifold and $X$ is sufficiently close to $M$ in terms of Gromov-Hausdorff distance, then for a sufficiently small $\epsilon>0$, the Vietoris-Rips complex $\Rip_{\epsilon}(X)$ is homotopic to $M$ \cite{latschev2001vietoris}. However, in practice, since we often lack prior information about $M$, the Vietoris-Rips complex with a larger scale is frequently considered, leading to studies on the homotopy type of the Vietoris-Rips complex with a large scale of circle, ellipse, $n$-sphere, etc. \cite{adamaszek2017vietoris, adamaszek2019vietoris, lim2024vietoris}. In the case of the circle, the homotopy type of the Vietoris-Rips complex has been fully studied, but other cases have only been partially studied.

Theorem \ref{barcode of circle} is the result for the Vietoris-Rips complex of a unit circle equipped with the Euclidean metric (denoted by $\mathbb{S}^{1}$). In \cite{adamaszek2017vietoris}, cyclic graph $\vec{G}$ and its invariant winding fraction $wf(\vec{G})$ are introduced. It was proven that $wf\left(\Rip_{\epsilon}\left(S^{1}\right)\right) = \epsilon$ where $S^1$ is a circle equipped with arc-length metric whose circumference is $1$. Using the fact that the Vietoris-Rips complex is a clique complex and $wf\left(\Rip_{\epsilon}\left(S^{1}\right)\right) = \epsilon$, the authors of \cite{adamaszek2017vietoris}  applied the previous results \cite{adamaszek2013clique} of homotopy classification of clique complex and obtained the exact formula of the Vietoris-Rips complex of $S^1$. The barcode formula was also given  for the Vietoris-Rips complex of  $\mathbb{S}^{1}$ in Proposition 10.1 in \cite{adamaszek2017vietoris} via arc-length results. In this paper, we will deal with $\mathbb{S}^{1}$ rather than $S^1$. But we note that it is also meaningful to deal with $S^1$, even if $S^1$ is not isometric embedded into $\mathbb{R}^{M+1}$. As mentioned earlier in the Introduction,
we are more interested in the topological properties of data rather than the metric properties.

\begin{theorem}[Proposition 10.1, \cite{adamaszek2017vietoris}, Sec 6.2, \cite{gakhar2019k}]
\label{barcode of circle}
Let $\mathbb{S}^{1}$ be a unit circle equipped with the Euclidean metric. Then 

$\mathsf{bcd}_{n}^{\Rip}	\left( \mathbb{S}^{1} \right)=
\begin{cases}
\left\{ \left( 0, \infty \right) \right\}, & \mbox{if }n=0 \\
 \left\{ \left( 2 \sin\left(\pi{k \over 2k +1}\right), 2 \sin\left(\pi {k+1 \over 2k+3}\right) \right] \right\}, & \mbox{if } n=2k+1, k\in \mathbb{Z}_{\ge 0} \\
 \ \emptyset \ , & \mbox{otherwise}.
\end{cases}$
\end{theorem}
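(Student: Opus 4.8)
The plan is to obtain the formula by transporting Proposition~10.1 of \cite{adamaszek2017vietoris} from the geodesic circle to the Euclidean circle. First I would set up the geodesic model: let $S^1$ be the circle of circumference $1$ with the arc-length metric, so that $\mathcal{R}_r(S^1)$ joins two points exactly when their arc-length distance is $<r$. The structural input I would quote as a black box is the Adamaszek--Adams homotopy classification: combining the clique-complex homotopy classification of \cite{adamaszek2013clique} with the identity $wf(\mathcal{R}_r(S^1))=r$ for the winding fraction of the associated cyclic graph, one obtains that $\mathcal{R}_r(S^1)$ is path-connected for every $r>0$, homotopy equivalent to the odd sphere $S^{2k+1}$ for $r\in\left(\tfrac{k}{2k+1},\tfrac{k+1}{2k+3}\right]$, and contractible for $r\ge\tfrac12$.

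Next I would read off the geodesic barcode from the persistence module $\left\{H_n\!\left(\mathcal{R}_r(S^1)\right)\right\}_r$. Path-connectedness for all $r>0$ yields the single degree-$0$ bar $(0,\infty)$. In degree $n=2k+1$, the vector space $H_{2k+1}\!\left(\mathcal{R}_r(S^1)\right)$ is one-dimensional exactly for $r\in\left(\tfrac{k}{2k+1},\tfrac{k+1}{2k+3}\right]$ and zero otherwise, since it vanishes at and below $\tfrac{k}{2k+1}$ (where the complex still looks like a lower odd sphere) and is superseded by a higher odd sphere above $\tfrac{k+1}{2k+3}$; hence there is exactly one bar $\left(\tfrac{k}{2k+1},\tfrac{k+1}{2k+3}\right]$. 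In every even positive degree the module is identically zero, so the barcode there is empty. This is precisely Proposition~10.1 of \cite{adamaszek2017vietoris}, also recorded in Section~6.2 of \cite{gakhar2019k}.

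Finally I would change metrics. Two points of the unit circle whose arc-length distance equals $r\in[0,\tfrac12]$ (a fraction of the circumference) subtend central angle $2\pi r$ and therefore lie at Euclidean distance $2\sin(\pi r)$; since $r\mapsto 2\sin(\pi r)$ is strictly increasing on $[0,\tfrac12]$, one has $\mathcal{R}_{2\sin(\pi r)}(\mathbb{S}^1)=\mathcal{R}_r(S^1)$, i.e. the two filtrations agree after the order-preserving reparametrization $\epsilon=2\sin(\pi r)$ of the filtration axis. Their persistence modules are thus isomorphic up to this relabelling, and applying it to the bars above sends each endpoint $\tfrac{k}{2k+1}$ to $2\sin\!\left(\pi\tfrac{k}{2k+1}\right)$ and $\tfrac{k+1}{2k+3}$ to $2\sin\!\left(\pi\tfrac{k+1}{2k+3}\right)$, while leaving the degree-$0$ bar $(0,\infty)$ and the emptiness in even degrees unchanged; this is the claimed formula.

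The one genuinely deep ingredient is the homotopy classification of $\mathcal{R}_r(S^1)$ quoted in the first step, which I would borrow wholesale from \cite{adamaszek2013clique,adamaszek2017vietoris}. The remaining steps are routine, the only point needing care being the bookkeeping at the exceptional radii $r=\tfrac{k}{2k+1}$ — equivalently, the distinction between the strict and the closed Vietoris-Rips conventions, which here is fixed by Definition~\ref{Vietoris-Rips complex}.
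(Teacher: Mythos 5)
Your argument is correct, and it is essentially the derivation that the paper itself defers to: Theorem \ref{barcode of circle} is stated as a quotation of Proposition 10.1 of \cite{adamaszek2017vietoris} (see also Sec.~6.2 of \cite{gakhar2019k}), and the route you describe --- the winding-fraction/clique-complex homotopy classification of $\mathcal{R}_r(S^1)$ for the geodesic circle from \cite{adamaszek2013clique, adamaszek2017vietoris}, followed by the order-preserving reparametrization $\epsilon = 2\sin(\pi r)$ of the filtration axis --- is exactly how that reference obtains the Euclidean formula, as the discussion preceding the theorem in Section \ref{sec:Defintions} indicates. The only place the paper supplies its own argument is the final appendix, and there it takes a genuinely different, more elementary route covering only $n=0,1$: it computes the barcode of the regular $6k$-gon directly by exhibiting explicit cycles and bounding chains (Lemma \ref{hexagon}, Theorem \ref{6k barcode}) and then passes to $\mathbb{S}^1$ by Gromov--Hausdorff stability (Proposition \ref{app:stability theorem}, Corollary \ref{circle complete}). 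That route avoids the deep homotopy classification entirely, at the price of not reaching the higher odd-dimensional bars; your route gives the full statement but must import the classification wholesale.

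One point to tighten: knowing that $H_{2k+1}\left(\mathcal{R}_r(S^1)\right)$ is pointwise one-dimensional on $\left(\tfrac{k}{2k+1},\tfrac{k+1}{2k+3}\right]$ and zero elsewhere does not by itself force a single bar --- a priori the interval could split into several bars if an internal structure map vanished --- so you also need that the inclusions induce isomorphisms on homology within each regime. This is part of what Proposition 10.1 of \cite{adamaszek2017vietoris} establishes, and since you quote that proposition anyway the issue is presentational rather than substantive, but the word ``hence'' in your second paragraph is doing more work than the pointwise statement alone can support.
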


Sliding window embedding is a popular method for time-series data analysis using TDA. Through sliding window embedding a point cloud is formed and simplicial complex is constructed toward TDA. The following provides the definition of sliding window embedding. We refer the reader to \cite{perea2015sliding} for detailed explanation of the application of persistent homology to time-series data with sliding window embedding. 
\begin{definition}[\cite{perea2015sliding}, Sliding window embedding]
\label{slidingwindow}
Let $\mathbb{T} = \mathbb{R} \big/ 2\pi\mathbb{Z}$ and $f : \mathbb{T} \rightarrow \mathbb{R}$. Choose $M \in \mathbb{N}$ and $\tau \in \mathbb{R}$. Then sliding window embedding of $f$ is defined by 
$$SW_{M,\tau}f(t) = \begin{bmatrix} f(t) \\ f(t+\tau) \\ \vdots \\ f(t+M\tau) \end{bmatrix} \in \mathbb{R}^{M+1}.$$
\end{definition}
Sliding window embedding translates a sinusoidal function into an ellipse (planar curve). If we set $\tau = {2\pi \over M+1}$, then a sinusoidal function is translated into a circle. Here $M$ is a hyperparameter that determines the dimension of the embedding space. The value of $\tau$ is the sampling resolution of the given time-series data. The given data is represented as a point cloud in the embedding space of dimension $M+1$.

{\color{black} In Theorem \ref{barcode of circle}, we have the barcode formula for the Vietoris-Rips complex of a circle. By setting $\tau = {2\pi \over M+1}$, we can deduce the exact barcode formula for the Liouville torus of time-series data as described in Theorem \ref{cor:barcode} and Theorem \ref{direction barcode}. The consideration of different $\tau$ values motivates us to determine the homotopy type of the Vietoris-Rips complex of an ellipse, as partially demonstrated in \cite{adamaszek2019vietoris}. From now on, unless otherwise specified, we set $\tau = {2\pi \over M+1}$. This condition is useful to calculate the barcode and clarify our theory.}

The following theorem provides the justification of using the truncated Fourier approximation of the given time-series data for TDA. 
\begin{theorem}[Proposition 4.2, \cite{perea2015sliding}]
\label{truncated approximation}
Let $f \in C^{l}(\mathbb{T},\mathbb{R})$ and $S_N f$ be the $N$th truncated Fourier series of $f$.\label{meaning of N} 
If $\mathsf{bcd}_{n}^{\Rip}(f)$ and $\mathsf{bcd}_{n}^{\Rip}(S_N f)$ are the $n$-dimensional barcodes of $SW_{M,\tau}f$ and $SW_{M,\tau}S_{N}f$, then 
\[
d_{B}(\mathsf{bcd}_{n}^{\Rip}(f),\mathsf{bcd}_{n}^{\Rip}(S_N f)) 
\le 2 \sqrt{{2 \over 2l-1}} \lVert f^{(l)} - S_{N} f^{(l)} \rVert_{2} {\sqrt{M+1} \over (N+1)^{l-{1 \over 2}}}.
\]
\end{theorem}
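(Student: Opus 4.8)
The plan is to chain three estimates: (i) stability of Vietoris--Rips persistence, which bounds $d_B$ by twice the Gromov--Hausdorff distance between the two point clouds $SW_{M,\tau}f(T)$ and $SW_{M,\tau}(S_N f)(T)$; (ii) an elementary coordinatewise comparison showing this distance is at most $\sqrt{M+1}\,\lVert f - S_N f\rVert_\infty$; and (iii) a Sobolev-type bound on $\lVert f - S_N f\rVert_\infty$ in terms of $\lVert f^{(l)} - (S_N f)^{(l)}\rVert_2$. Write $g = S_N f$ throughout for brevity.

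For (i), recall the persistence stability theorem for Vietoris--Rips filtrations (Chazal--de Silva--Oudot): for bounded metric spaces $A,B$ one has $d_B\bigl(dgm_{n}^{\mathcal{R}}(A), dgm_{n}^{\mathcal{R}}(B)\bigr) \le 2\, d_{GH}(A,B)$, where $d_{GH}$ is the Gromov--Hausdorff distance. Apply this with $A = SW_{M,\tau}f(T)$ and $B = SW_{M,\tau}g(T)$. Since both sets are parametrized by the same index set $T$, the set $R = \{(SW_{M,\tau}f(t),\, SW_{M,\tau}g(t)) : t\in T\}$ is a correspondence, and by the triangle inequality its distortion satisfies $\mathrm{dis}(R) \le 2\sup_{t\in T}\lVert SW_{M,\tau}f(t) - SW_{M,\tau}g(t)\rVert$. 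Combining with $d_{GH}(A,B) \le \tfrac12\,\mathrm{dis}(R)$ gives $d_B \le 2\, d_{GH}(A,B) \le 2\sup_{t\in T}\lVert SW_{M,\tau}f(t) - SW_{M,\tau}g(t)\rVert$.

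For (ii), the $j$th coordinate of $SW_{M,\tau}f(t) - SW_{M,\tau}g(t)$ is $(f-g)(t+j\tau)$, so
\[
\lVert SW_{M,\tau}f(t) - SW_{M,\tau}g(t)\rVert^2 = \sum_{j=0}^{M}\bigl|(f-g)(t+j\tau)\bigr|^2 \le (M+1)\,\lVert f-g\rVert_\infty^2,
\]
whence $d_B \le 2\sqrt{M+1}\,\lVert f-g\rVert_\infty$. For (iii), using the Fourier expansion $f=\sum_k c_k e^{ikt}$ one has $f-g = \sum_{|k|>N} c_k e^{ikt}$, so $\lVert f-g\rVert_\infty \le \sum_{|k|>N}|c_k|$; writing $|c_k| = |k|^{-l}\cdot|k|^l|c_k|$ and applying the Cauchy--Schwarz inequality yields $\sum_{|k|>N}|c_k| \le \bigl(\sum_{|k|>N}|k|^{-2l}\bigr)^{1/2}\bigl(\sum_{|k|>N}|k|^{2l}|c_k|^2\bigr)^{1/2}$. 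Since $\widehat{f^{(l)}}(k)=(ik)^l c_k$, the second factor equals $\lVert f^{(l)}-g^{(l)}\rVert_2$ in the $L^2$ normalization for which Plancherel holds with no extra constant, while the first factor is controlled by comparing the tail series with an integral, $\sum_{|k|>N}|k|^{-2l} \le \tfrac{2}{2l-1}(N+1)^{-(2l-1)}$. Substituting both bounds into $d_B \le 2\sqrt{M+1}\,\lVert f-g\rVert_\infty$ gives the claimed inequality.

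The conceptual content lies entirely in step (i); everything else is routine. The delicate point is the constants: one must invoke the correct form of the Vietoris--Rips stability bound (the factor $2$ relating $d_B$ and $d_{GH}$ under the diameter convention for the Rips parameter), verify that the $\tfrac12$ in the definition of $d_{GH}$ cancels the $2$ in $\mathrm{dis}(R)$, and pin down the Fourier normalization together with the indexing of $S_N f$ so that the integral comparison delivers $(N+1)^{l-\frac12}$ in the denominator rather than $N^{l-\frac12}$. I expect this constant-chasing --- and no genuine difficulty --- to be the only obstacle.
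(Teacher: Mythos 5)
The paper does not prove this statement at all: it is imported verbatim as Proposition 4.2 of \cite{perea2015sliding} and used as a black box, so there is no in-paper argument to compare against. Judged on its own terms, your reconstruction follows the standard (and, as far as the structure goes, the original) route: Vietoris--Rips stability to pass from $d_B$ to the Gromov--Hausdorff (or Hausdorff) distance between the two sliding-window point clouds, the coordinatewise computation giving the factor $\sqrt{M+1}\,\lVert f-S_Nf\rVert_\infty$, and Cauchy--Schwarz on the Fourier tail with weights $|k|^{-l}$ to convert $\lVert f-S_Nf\rVert_\infty$ into $\lVert f^{(l)}-S_Nf^{(l)}\rVert_2$. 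Steps (i) and (ii) are correct, including the bookkeeping of the factor $2$ between the distortion and $d_{GH}$.

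The gap is exactly at the point you flagged as ``constant-chasing.'' The inequality you assert, $\sum_{|k|>N}|k|^{-2l}\le\tfrac{2}{2l-1}(N+1)^{-(2l-1)}$, is false --- not just for small $N$ but for every $N$ and $l$. Since $x\mapsto x^{-2l}$ is decreasing, the tail sum is bounded \emph{below} by the integral starting at $N+1$:
\begin{equation*}
\sum_{k=N+1}^{\infty}k^{-2l}\;\ge\;\int_{N+1}^{\infty}x^{-2l}\,dx\;=\;\frac{(N+1)^{1-2l}}{2l-1},
\end{equation*}
so the quantity you need to dominate the sum is in fact strictly smaller than the sum. The integral comparison that is actually available, $\sum_{k=N+1}^{\infty}k^{-2l}\le\int_{N}^{\infty}x^{-2l}\,dx=\tfrac{N^{1-2l}}{2l-1}$, delivers the theorem with $N^{l-\frac12}$ in the denominator rather than $(N+1)^{l-\frac12}$ --- a strictly weaker conclusion than the one stated. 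To recover the stated constant you would need a different splitting of the weights (or to accept the $N$-version, which suffices for the way the theorem is used in this paper, namely that the right-hand side tends to $0$ as $N\to\infty$). A secondary loose end: you should fix the Fourier normalization on $\mathbb{T}=\mathbb{R}/2\pi\mathbb{Z}$ explicitly, since with the unnormalized $L^2(0,2\pi)$ norm Parseval introduces a factor of $\sqrt{2\pi}$ that would contaminate the constant in the second Cauchy--Schwarz factor.
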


Note that for fixed $n$ and $l$, the right-hand side  vanishes as $N$ goes to infinity. This theorem tells us that the barcode of the truncated Fourier series is an approximation of the barcode of the given time-series data with respect to the bottleneck distance. 

\begin{proposition}[Proposition 5.1, \cite{perea2015sliding}] \label{embedding dimension}
Let $u_{L} = \bigl( 1, \cos(L\tau), \cdots , \cos(LM\tau) \bigr)$ and $v_{L} =  \bigl( 0, \sin(L\tau),\cdots , \sin(LM$ $\tau) \bigr), L = 0, 1, \cdots, N$.  If $M\tau < 2\pi$, then $u_{0},u_{1},v_{1},\cdots,u_{N},v_{N}$ 
are linearly independent if and only if $M \ge 2N$. 
\end{proposition}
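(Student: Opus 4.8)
The plan is to translate this linear-algebra statement into a statement about the zeros of trigonometric polynomials. Suppose $a_0 u_0 + \sum_{L=1}^{N}\bigl(a_L u_L + b_L v_L\bigr) = \mathbf{0}$ in $\mathbb{R}^{M+1}$. Reading this vector equation coordinate by coordinate, with the coordinates indexed by $j = 0, 1, \ldots, M$, it is exactly equivalent to the assertion that the real trigonometric polynomial
$$p(\theta) \;=\; a_0 + \sum_{L=1}^{N}\bigl(a_L \cos(L\theta) + b_L \sin(L\theta)\bigr)$$
vanishes at each of the $M+1$ sample points $\theta_j = j\tau$, $j = 0, \ldots, M$. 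Since $\tau > 0$ and $M\tau < 2\pi$, these points satisfy $0 = \theta_0 < \theta_1 < \cdots < \theta_M < 2\pi$, so they are pairwise distinct modulo $2\pi$; thus any such nonzero dependence forces $p$ to have at least $M+1$ distinct zeros on $\mathbb{T}$.

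For sufficiency, assume $M \ge 2N$. I would invoke the standard fact that a trigonometric polynomial of degree at most $N$ that is not identically zero has at most $2N$ zeros in one period: writing $p(\theta) = \sum_{L=-N}^{N} c_L e^{iL\theta}$ and setting $q(z) = \sum_{m=0}^{2N} c_{m-N} z^{m}$, one has $e^{iN\theta} p(\theta) = q(e^{i\theta})$, where $q$ is an ordinary polynomial of degree at most $2N$ that is nonzero whenever $p \not\equiv 0$, and whose roots on the unit circle correspond to the zeros of $p$ in $[0, 2\pi)$. Since $p$ vanishes at $M + 1 \ge 2N + 1 > 2N$ distinct points, we conclude $p \equiv 0$. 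Because $1, \cos\theta, \sin\theta, \ldots, \cos(N\theta), \sin(N\theta)$ are linearly independent as functions on $\mathbb{T}$, all the $a_L$ and $b_L$ vanish, so $u_0, u_1, v_1, \ldots, u_N, v_N$ are linearly independent.

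For necessity, suppose $M < 2N$, i.e. $M + 1 \le 2N$. Then $u_0, u_1, v_1, \ldots, u_N, v_N$ is a list of $2N + 1$ vectors lying in the $(M+1)$-dimensional space $\mathbb{R}^{M+1}$ with $M + 1 \le 2N < 2N + 1$, hence they are necessarily linearly dependent. This establishes the equivalence.

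The only genuinely nontrivial ingredient is the bound on the number of zeros of a degree-$N$ trigonometric polynomial, together with the observation that the hypothesis $M\tau < 2\pi$ is exactly what makes the $M+1$ sample nodes distinct on the circle; without it, coincident nodes would not force $p \equiv 0$. An alternative route would be to argue directly that the $(M+1)\times(2N+1)$ trigonometric Vandermonde matrix with rows $\bigl(1, \cos(j\tau), \sin(j\tau), \ldots, \cos(jN\tau), \sin(jN\tau)\bigr)$ has full column rank, by passing to the complex exponentials $e^{ijL\tau}$ and appealing to the nonvanishing of a classical Vandermonde determinant in the distinct nodes $e^{ij\tau}$; I expect the zero-counting argument above to be the cleanest to carry out.
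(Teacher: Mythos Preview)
Your argument is correct. The paper does not supply a proof of this proposition at all; it merely cites it from \cite{perea2015sliding} as background. Your reduction to counting zeros of a degree-$N$ trigonometric polynomial is the standard way to prove this, and is essentially the argument in the original reference: the vectors $u_0, u_1, v_1, \ldots, u_N, v_N$ form the columns of the evaluation matrix of the basis $1, \cos\theta, \sin\theta, \ldots, \cos N\theta, \sin N\theta$ at the nodes $0, \tau, \ldots, M\tau$, so linear independence of the columns is exactly injectivity of this evaluation, which holds precisely when the number of distinct nodes exceeds $2N$. Both the sufficiency step (via the $2N$-zero bound, or equivalently the Vandermonde argument you sketch at the end) and the necessity step (pure dimension count) are clean and complete; the hypothesis $M\tau < 2\pi$ is used exactly where you say, to guarantee the nodes are distinct on $\mathbb{T}$.
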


\begin{proposition}[Sec 5, \cite{perea2015sliding}] \label{roundness}
$SW_{M,\tau}\cos(Lt) = \cos(Lt)u_{L} - \sin(Lt)v_{L}$ and $SW_{M,\tau}\sin(Lt)=\sin(Lt)u_{L}$ $+ \cos(Lt)v_{L}$ are the images of the sliding window embedding of a sinusoidal function on $P_{L} := span\left\{ u_{L} , v_{L} \right\}$. We call $P_L$ $L$-plane.
\end{proposition}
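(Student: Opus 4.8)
The plan is to compute the sliding window embedding one coordinate at a time and to recognize the resulting vector as a time‑dependent linear combination of the two fixed vectors $u_L$ and $v_L$. First I would take $f(t) = \cos(Lt)$ and write, for each index $j = 0, 1, \dots, M$, the $j$‑th coordinate of $SW_{M,\tau}f(t)$ as $f(t + j\tau) = \cos\bigl(L(t + j\tau)\bigr) = \cos(Lt + jL\tau)$, keeping the indexing convention (coordinates numbered $0$ through $M$) aligned with the definitions of $u_L$ and $v_L$ so that the constant entry $1$ and the zero entry line up in the $j = 0$ slot.

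Next I would apply the angle‑addition identity $\cos(\alpha + \beta) = \cos\alpha\cos\beta - \sin\alpha\sin\beta$ with $\alpha = Lt$ and $\beta = jL\tau$, giving $\cos(Lt + jL\tau) = \cos(Lt)\cos(jL\tau) - \sin(Lt)\sin(jL\tau)$. Assembling these $M+1$ scalars into a vector, the $\cos(Lt)$‑terms collect into $\cos(Lt)\,(1, \cos(L\tau), \dots, \cos(LM\tau)) = \cos(Lt)\,u_L$ and the $\sin(Lt)$‑terms into $-\sin(Lt)\,(0, \sin(L\tau), \dots, \sin(LM\tau)) = -\sin(Lt)\,v_L$, yielding $SW_{M,\tau}\cos(Lt) = \cos(Lt)\,u_L - \sin(Lt)\,v_L$. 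The same computation with $\sin(\alpha + \beta) = \sin\alpha\cos\beta + \cos\alpha\sin\beta$ gives the companion identity $SW_{M,\tau}\sin(Lt) = \sin(Lt)\,u_L + \cos(Lt)\,v_L$.

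Finally, since each of these two expressions is, for every $t$, a linear combination of the fixed vectors $u_L$ and $v_L$, the entire image of the embedded curve lies in $P_L = \operatorname{span}\{u_L, v_L\}$, which is the asserted planarity. I do not expect any genuine obstacle: the argument is a direct application of the trigonometric addition formulas followed by regrouping of coordinates, and the only point requiring mild care is the bookkeeping of the index $j$ so that the constant and vanishing entries of $u_L$ and $v_L$ are matched correctly.
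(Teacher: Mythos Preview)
Your argument is correct and is exactly the standard derivation: expand each coordinate via the angle-addition formula and regroup. The paper itself does not supply a proof of this proposition but merely cites it from Perea--Harer, so there is nothing further to compare; your computation is the natural (and essentially only) way to establish the identity.
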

Proposition \ref{embedding dimension} tells us that a sufficiently large embedding dimension is important to preserve geometric information. For example, suppose that $M =1$ and $\tau = {2 \pi \over M+1}$. Then $u_{1} = (1,-1)$,  $v_{1} = (0,0)$ and $SW_{M,\tau}\cos(t) = \cos(t)(1,-1)$, so $SW_{M,\tau}\cos(Lt)$ loses the circle information. {\color{black}On the other hand, if $M \ge 2$, for any $\tau$ that satisfies $M\tau < 2\pi$, the previous situation does not occur.} From now on, unless otherwise specified, we set $M = 2N$.

\begin{theorem}
[Theorem 5.6, \cite{perea2015sliding}] 
\label{N-torus} Let $C : \mathbb{R}^{M+1} \rightarrow \mathbb{R}^{M+1}$ be the centering map
\begin{equation*}
C(\mathbf{x}) = \mathbf{x} - {\left\langle \mathbf{x},\mathbf{1} \right\rangle \over \lVert \mathbf{1} \rVert^{2}} \mathbf{1} \mbox{ where } \mathbf{1} = \begin{bmatrix}
1 \\ \vdots \\ 1
\end{bmatrix} \in \mathbb{R}^{M+1}.  
\end{equation*}
If $S_{N}f(t) = \sum\limits_{n=0}^N a_{n} \cos(nt) + b_{n} \sin(nt)$, then 
$$C(SW_{M,\tau}S_N f (t)) = \sum\limits_{L=1}^{N} \sqrt{{M+1 \over 2}}r^{f}_{L}(\cos(Lt) \tilde{x}_{L} + \sin(Lt)\tilde{y}_{L}),$$ 
where $\ r_{L}^{f} = 2 \left| \hat{f}(L) \right|$ and orthonormal vectors $\tilde{x}_{L} = {\sqrt{{2 \over M+1}}} {a_{L}u_{L} + b_{L}v_{L} \over r_L^f}$ and $\tilde{y}_{L} = {\sqrt{{2 \over M+1}}} {b_{L}u_{L} - a_{L}v_{L} \over r_L^f}$.
Here $\hat{f}(L)$ is the $L$th Fourier coefficient. 
\end{theorem}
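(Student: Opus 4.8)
The plan is to reduce everything to three ingredients: linearity of the sliding window map $SW_{M,\tau}$, the explicit description of $SW_{M,\tau}$ on pure sinusoids from Proposition~\ref{roundness}, and the discrete orthogonality relations satisfied by the vectors $u_L,v_L$ under the standing hypotheses $\tau = 2\pi/(M+1)$ and $M = 2N$. First I would expand $SW_{M,\tau}S_N f$ by linearity over the sum $S_N f(t)=\sum_{n=0}^N a_n\cos(nt)+b_n\sin(nt)$ and substitute $SW_{M,\tau}\cos(Lt)=\cos(Lt)u_L-\sin(Lt)v_L$ and $SW_{M,\tau}\sin(Lt)=\sin(Lt)u_L+\cos(Lt)v_L$; the $L=0$ terms contribute only $a_0 u_0 = a_0\mathbf{1}$. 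Collecting the $\cos(Lt)$ and $\sin(Lt)$ coefficients gives
\[
SW_{M,\tau}S_N f(t) = a_0\mathbf{1} + \sum_{L=1}^N\bigl(\cos(Lt)(a_L u_L + b_L v_L) + \sin(Lt)(b_L u_L - a_L v_L)\bigr).
\]

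Next I would apply the centering map $C$, which is linear and is the orthogonal projection onto $\mathbf{1}^{\perp}$. Since $C(\mathbf{1})=0$, the constant term disappears, and it remains to understand $C$ on $u_L,v_L$ for $1\le L\le N$. This is exactly where the choices $\tau=2\pi/(M+1)$ and $M=2N$ are used: for these values the geometric sum $\sum_{j=0}^M e^{\mathrm{i}kj\tau}$ vanishes whenever $k$ is not a multiple of $M+1=2N+1$, and this covers every index $k\in\{L,\,2L,\,L\pm L'\}$ arising for $1\le L,L'\le N$. Taking real and imaginary parts of these identities yields $\langle u_L,\mathbf{1}\rangle=\langle v_L,\mathbf{1}\rangle=0$ (so $C$ fixes both $u_L$ and $v_L$), together with $\lVert u_L\rVert^2=\lVert v_L\rVert^2=(M+1)/2$, $\langle u_L,v_L\rangle=0$, and the cross-relations $\langle u_L,u_{L'}\rangle=\langle u_L,v_{L'}\rangle=\langle v_L,v_{L'}\rangle=0$ for $L\ne L'$.

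Then I would normalize. From $\lVert u_L\rVert^2=\lVert v_L\rVert^2=(M+1)/2$ and $\langle u_L,v_L\rangle=0$ one obtains $\lVert a_L u_L+b_L v_L\rVert^2=\lVert b_L u_L-a_L v_L\rVert^2=(a_L^2+b_L^2)(M+1)/2$ and $\langle a_L u_L+b_L v_L,\,b_L u_L-a_L v_L\rangle=0$. Combined with $r_L^f=2|\hat f(L)|=\sqrt{a_L^2+b_L^2}$ (standard Fourier convention), this says precisely that $\tilde x_L$ and $\tilde y_L$ as defined are unit vectors spanning the plane $P_L$, and the cross-orthogonalities from the previous step make the whole family $\{\tilde x_L,\tilde y_L\}_{L=1}^N$ orthonormal. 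Substituting $a_L u_L+b_L v_L=\sqrt{(M+1)/2}\,r_L^f\,\tilde x_L$ and $b_L u_L-a_L v_L=\sqrt{(M+1)/2}\,r_L^f\,\tilde y_L$ into the centered expression produces the claimed formula.

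The only genuinely delicate point is the bookkeeping behind the vanishing of these discrete exponential sums, namely verifying that none of the indices $L$, $2L$, $L\pm L'$ (for $1\le L,L'\le N$, $L\ne L'$) is a multiple of $M+1$; this is exactly what the embedding-dimension hypothesis $M\ge 2N$ guarantees (cf.\ Proposition~\ref{embedding dimension}), and once it is in place everything else is routine trigonometric algebra.
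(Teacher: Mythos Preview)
Your argument is correct and complete: the linearity of $SW_{M,\tau}$, the explicit action on pure sinusoids from Proposition~\ref{roundness}, and the discrete orthogonality of the $u_L,v_L$ under $\tau=2\pi/(M+1)$, $M=2N$ are exactly the three ingredients needed, and you carry them through cleanly. Note that the paper itself does not supply a proof of this theorem; it is quoted as Theorem~5.6 of \cite{perea2015sliding} and used as a black box. Your write-up is essentially the proof given in that reference, so there is nothing to compare against within the present paper.
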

Let us define $\psi_{f,N}(t)$ as $\sqrt{{2 \over M+1}} C(SW_{M,\tau}S_N f (t))= {\sqrt{{2 \over M+1}}}\left( SW_{M,\tau}S_N f (t)  -\hat{f}(0) \cdot \mathbf{1} \right)$. That is, $\psi_{f,N}(t)$  is simply given by the following 
\begin{equation}
\label{eq:N-torus}
\psi_{f,N}(t) = \sum\limits_{L=1}^{N} r^{f}_{L}(\cos(Lt) \tilde{x}_{L} + \sin(Lt)\tilde{y}_{L}). 
\end{equation}
The above procedure helps our argument to become more concise. Here note that this procedure  does not change the topology of the given point cloud since we only apply an expansion and a translation to $S_Nf(t)$. From now on, unless otherwise specified, we abbreviate $\psi_{f,N}$ as $\psi_f$.

Computing persistent homology over the Vietoris–Rips complex of the sliding window embedding of time-series data becomes computationally expensive when the time-series length is large. This motivates the need for exact barcode formulas, which provide closed-form expressions that bypass costly computations. To this end, we next recall a standard fact about product metric spaces that will be useful in our analysis.

For metric spaces $(X_i,d_{X_i})$ $(i=1,\cdots,k)$, we equip the product space 
$X_1 \times \cdots \times X_k$ with the maximum metric, defined for 
$\mathbf{x}=(x_1,\cdots,x_k)$ and $\mathbf{y}=(y_1,\cdots,y_k)$ by
\[
d_{\max}(\mathbf{x},\mathbf{y})
= \max_{1 \le i \le k} d_{X_i}(x_i,y_i).
\]

The maximum metric is adopted because it makes the diameter of a set in the product equal to the largest diameter of its coordinate projections, which guarantees that the Vietoris–Rips complex of the product decomposes as the product of the Vietoris–Rips complexes of the factors.

\begin{proposition}[{Proposition 10.2, \cite{adamaszek2017vietoris}}]
\label{product rips}
Let $(X_1,d_{X_1}), \cdots ,(X_k,d_{X_k})$ be metric spaces and let $(X_1 \times \cdots \times X_k, d_{\max})$ be the product space  equipped with maximum metric. For $\epsilon \in \mathbb{R}$, 
$$\Rip_{\epsilon}(X_1 \times \cdots \times X_k) =  \Rip_{\epsilon}(X_1) \times \cdots \times \Rip_{\epsilon}(X_k).$$ 
\end{proposition}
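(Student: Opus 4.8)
The plan is a direct unwinding of definitions, the one genuine observation being that a maximum over a doubly indexed family can be reorganized. First I would recall what the product $K_1 \times \cdots \times K_k$ of simplicial complexes means here: it is the simplicial complex whose vertex set is $V(K_1) \times \cdots \times V(K_k)$ and in which a finite subset $S$ of this vertex set is declared a simplex if and only if each coordinate projection $\pi_j(S) \subseteq V(K_j)$ is a simplex of $K_j$. With this convention, both $\mathcal{R}_{\epsilon}(X_1 \times \cdots \times X_k)$ and $\mathcal{R}_{\epsilon}(X_1) \times \cdots \times \mathcal{R}_{\epsilon}(X_k)$ are simplicial complexes on the common vertex set $X_1 \times \cdots \times X_k$ (the vertices of $\mathcal{R}_{\epsilon}(X_j)$ being all of $X_j$), so it suffices to check that the two complexes have the same simplices.

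Next I would fix a finite subset $S = \{p_0, \dots, p_n\} \subseteq X_1 \times \cdots \times X_k$ and write $p_i = (x_i^{(1)}, \dots, x_i^{(k)})$. By Definition \ref{Vietoris-Rips complex}, $S \in \mathcal{R}_{\epsilon}(X_1 \times \cdots \times X_k)$ iff $\max_{0 \le i, i' \le n} d_{\max}(p_i, p_{i'}) < \epsilon$. Unfolding the maximum metric, $d_{\max}(p_i, p_{i'}) = \max_{1 \le j \le k} d_{X_j}(x_i^{(j)}, x_{i'}^{(j)})$, so the left-hand quantity equals $\max_{i,i'} \max_{j} d_{X_j}(x_i^{(j)}, x_{i'}^{(j)})$. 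Interchanging the two maxima, which is the key and essentially only step, this equals $\max_{j} \bigl( \max_{i,i'} d_{X_j}(x_i^{(j)}, x_{i'}^{(j)}) \bigr)$, and this is $< \epsilon$ precisely when $\max_{i,i'} d_{X_j}(x_i^{(j)}, x_{i'}^{(j)}) < \epsilon$ holds for every $j$, i.e. precisely when $\pi_j(S) \in \mathcal{R}_{\epsilon}(X_j)$ for every $j$. By the definition of the product recalled above, this last condition is exactly the statement that $S$ is a simplex of $\mathcal{R}_{\epsilon}(X_1) \times \cdots \times \mathcal{R}_{\epsilon}(X_k)$, which closes the argument.

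The only point requiring a little care, and the closest thing here to an obstacle, is that $\pi_j(S)$ may have strictly fewer elements than $S$ when two points of $S$ share a $j$-th coordinate; but since Vietoris-Rips simplices are determined by their vertex \emph{set} and the diameter condition is insensitive to repeated points, this collapsing causes no problem, and one may alternatively treat only the case $k = 2$ and induct on $k$ to avoid multi-index bookkeeping. I do not anticipate any further difficulty: the statement is purely combinatorial, with no homology or homotopy involved, and the proof is complete as soon as the maxima are interchanged.
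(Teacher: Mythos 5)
Your proof is correct and complete. The paper itself states this result as an imported fact (Proposition 10.2 of \cite{adamaszek2017vietoris}) without supplying a proof, and your argument --- unfolding the definition of the categorical product of simplicial complexes on the vertex set $X_1 \times \cdots \times X_k$ and interchanging the two maxima in $\max_{i,i'}\max_j d_{X_j}\bigl(x_i^{(j)}, x_{i'}^{(j)}\bigr)$ --- is exactly the standard one; your remark that the projections $\pi_j(S)$ may collapse points but that this is harmless for the diameter condition correctly disposes of the only subtlety.
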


To analyze the given topological space, we usually consider its subspace. For example, Seifert-Van Kampen theorem implies that to calculate the fundamental group of figure eight, it is enough to know the fundamental group of circle \cite{Hatcher:478079}. K\"unneth formula helps us to calculate homology group of the product space. Similarly persistent K\"unneth formula is a useful tool to calculate persistent homology of the product space from persistent homology of each space. 
In \cite{gakhar2019k}, it was shown that by using geometric realization and the equivalence between simplicial and singular homology, the classical Künneth formula also applies to simplicial complexes. Based on this result, one can derive a persistent version of the Künneth formula for Vietoris–Rips complexes. The following theorem presents this persistent Künneth formula, which serves as a key tool in our work.

\begin{theorem}[Persistent K\"unneth formula, Corollary 4.5, \cite{gakhar2019k}]
\label{thm:persistent-kunneth}
Let $\mathcal{K}^1 , \cdots, 
    \mathcal{K}^k $ 
be one-parameter filtrations of finite simplicial complexes.  
Then for all $n \ge 0$,
\begin{equation}
\label{eq:persistent-kunneth}
\mathsf{bcd}_{n}(\mathcal{K}^1 \times \cdots \times \mathcal{K}^k)
= \left\{ I_1^{n_1} \cap \cdots \cap I_k^{n_k} :
I_j^{n_j} \in \mathsf{bcd}_{n_j}(\mathcal{K}^j), \;
\sum_{j=1}^{k} n_j = n \right\},
\end{equation}
where $\mathcal{K}^1 \times \cdots \times \mathcal{K}^k$ denotes the product filtration, i.e., $(\mathcal{K}^1 \times \cdots \times \mathcal{K}^k)_\epsilon = \mathcal{K}^1_\epsilon \times \cdots \times \mathcal{K}^k_\epsilon$ for each $\epsilon \in \mathbb{R}$. In particular, when $\mathcal{K}^j = \Rip(X_j)$ for finite metric spaces $(X_j,d_{X_j})$, 
the product filtration is the Vietoris–Rips complex of the product space $(X_1 \times \cdots \times X_k, d_{\max})$.
\end{theorem}

Now finally the following definitions and proposition state about  multi-parameter persistent homology and rank invariant, one of the invariants of multi-parameter persistent homology.

\begin{definition}[Multi-parameter persistent homology]
\label{def:multiPH}
Let $\mathcal{K} = \{ \mathcal{K}_{\boldsymbol{\epsilon}} \}_{\boldsymbol{\epsilon} \in \mathbb{R}^d}$ be a $d$-parameter filtration of simplicial complexes. 
For $\boldsymbol{\epsilon}=(\epsilon_{1},\cdots,\epsilon_{d}), \boldsymbol{\epsilon}'=(\epsilon'_{1},\cdots,\epsilon'_{d}) \in \mathbb{R}^d$, 
we write $\boldsymbol{\epsilon} \le \boldsymbol{\epsilon}'$ if $\epsilon_{i} \le \epsilon'_{i}$ for all $i=1,\cdots,d$.
Then the pair
\[
\Bigl(\{ H_{n}(\mathcal{K}_{\boldsymbol{\epsilon}}) \}_{\boldsymbol{\epsilon}\in\mathbb{R}^d}, \;
\{ \iota_{*}^{\boldsymbol{\epsilon}, \boldsymbol{\epsilon}'} : H_{n}(\mathcal{K}_{\boldsymbol{\epsilon}}) 
\to H_{n}(\mathcal{K}_{\boldsymbol{\epsilon}'}) \}_{\boldsymbol{\epsilon} \le \boldsymbol{\epsilon}'} \Bigr)
\]
is called the $n$-dimensional $d$-parameter persistent homology, 
where $\iota_{*}^{\boldsymbol{\epsilon}, \boldsymbol{\epsilon}'}$ is the homomorphism induced by the inclusion 
$\iota^{\boldsymbol{\epsilon}, \boldsymbol{\epsilon}'} : \mathcal{K}_{\boldsymbol{\epsilon}} \hookrightarrow \mathcal{K}_{\boldsymbol{\epsilon}'}$.
\end{definition}

\begin{definition}[Rank invariant, \cite{carlsson2007theory}, \cite{lesnick2015interactive}]
\label{rank invariant}
Let $
\mathcal{H} := \{ (\boldsymbol{\epsilon},\boldsymbol{\epsilon}') \in \mathbb{R}^{d} \times \mathbb{R}^{d} : \boldsymbol{\epsilon} \le \boldsymbol{\epsilon}' \}$.
For a fixed dimension $n$, the rank invariant of a $d$-parameter persistent homology $\{H_n(\mathcal{K}_{\boldsymbol{\epsilon}})\}_{\boldsymbol{\epsilon} \in\mathbb{R}^d}$ is the function
\[
\begin{aligned}
\operatorname{rank}_n(\mathcal{K}) &: \mathcal{H} \to \mathbb{N}, \\
(\boldsymbol{\epsilon},\boldsymbol{\epsilon}') &\mapsto \operatorname{rank}\!\big(H_n(\mathcal{K}_{\boldsymbol{\epsilon}})
\xrightarrow{\;\iota_{*}^{\boldsymbol{\epsilon},\boldsymbol{\epsilon}'}\;} H_n(\mathcal{K}_{\boldsymbol{\epsilon}'})\big).
\end{aligned}
\]

\end{definition}

\begin{definition}[Fibered barcode, Sec 1.5, \cite{lesnick2015interactive}]
Let $\mathcal{L}$ be a collection of affine lines in $\mathbb{R}^{d}$ with nonnegative slope. 
For each $L \in \mathcal{L}$, restricting the $d$-parameter filtration 
$\mathcal{K} = \{ \mathcal{K}_{\boldsymbol{\epsilon}} \}_{\boldsymbol{\epsilon} \in \mathbb{R}^d}$ to $L$ 
yields a one-parameter persistent homology 
$\{H_n(\mathcal{K}_\epsilon)\}_{\epsilon\in L}$. 
Its barcode, denoted $\mathsf{bcd}_{n}(\mathcal{K}|_{L})$, is called the  fibered barcode of $\mathcal{K}$ along $L$. 
The collection $\{ \mathsf{bcd}_{n}(\mathcal{K}|_{L}) \}_{L \in \mathcal{L}}$ 
is called the the fibered barcode of $\mathcal{K}$.
\end{definition}

\begin{proposition}
[Sec 4.2, \cite{botnan2022introduction}]
\label{rank,fibered}
The rank invariant and fibered barcode are equivalent.
\end{proposition}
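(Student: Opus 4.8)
\emph{Proof proposal.} The plan is to show that the rank invariant and the fibered barcode each determine the other, which is the precise meaning of ``equivalent'' here; throughout I treat $M$ as pointwise finite dimensional, as is needed even for $\mathsf{bcd}_{*}(M^L)$ to be defined, so that for every affine line $L$ the restriction $M^L$ is a pointwise finite dimensional one-parameter module and hence interval-decomposable with a well-defined barcode $\mathsf{bcd}_{*}(M^L)$ \cite{botnan2020decomposition}. The one ingredient I would isolate first is the classical fact that the barcode of a pointwise finite dimensional one-parameter persistence module $V$ is recovered from its rank function $(a,b) \mapsto \mathrm{rank}(V_a \to V_b)$: since $V$ decomposes into interval modules and the structure map $(k_I)_a \to (k_I)_b$ of an interval module is the identity when $[a,b] \subseteq I$ and zero otherwise, one has $\mathrm{rank}(V_a \to V_b) = \#\{I : [a,b] \subseteq I\}$, and a Möbius/inclusion--exclusion expression in four such ranks over the corners of an arbitrarily small rectangle (with the obvious conventions at $\pm\infty$) returns the multiplicity of each interval in $\mathsf{bcd}_{*}(V)$. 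I would state this as a lemma and give the one-line derivation just sketched.

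For the implication ``rank invariant $\Rightarrow$ fibered barcode'': fix $L \in \mathcal{L}$ and parametrize it by $\mathbb{R}$ via an order-preserving affine map $\gamma_L$, where order-preservation is exactly the statement that a nonnegative slope means the direction vector of $L$ has all coordinates $\ge 0$, so $a \le b$ in $\mathbb{R}$ forces $\gamma_L(a) \le \gamma_L(b)$ in the product order on $\mathbb{R}^n$. Consequently the rank function of $M^L$ is precisely the restriction of $\mathrm{rank}(M)$ to the pairs $(\gamma_L(a), \gamma_L(b))$ with $a \le b$, all of which lie in $\mathcal{H}$. Applying the lemma to $V = M^L$ reconstructs $\mathsf{bcd}_{*}(M^L)$ from $\mathrm{rank}(M)$ alone, and letting $L$ range over $\mathcal{L}$ recovers the whole fibered barcode.

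For the converse ``fibered barcode $\Rightarrow$ rank invariant'': take $(s,t) \in \mathcal{H}$. If $s \ne t$, then $t - s$ has all coordinates $\ge 0$ and at least one $> 0$, so the line $L$ through $s$ with direction $t - s$ lies in $\mathcal{L}$ and contains $s$ and $t$; then $\mathrm{rank}(M_s \to M_t) = \mathrm{rank}(M^L_{\gamma_L^{-1}(s)} \to M^L_{\gamma_L^{-1}(t)})$, which equals the number of bars of $\mathsf{bcd}_{*}(M^L)$ whose interval contains $[\gamma_L^{-1}(s), \gamma_L^{-1}(t)]$ --- a quantity read directly off the fibered barcode. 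If $s = t$, then $\mathrm{rank}(M_s \to M_s) = \dim M_s$ equals the number of bars of $\mathsf{bcd}_{*}(M^L)$ through $s$ for any single $L \in \mathcal{L}$ with $s \in L$ (e.g. the diagonal line), again determined by the fibered barcode. This closes the loop and proves equivalence.

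\textbf{Main obstacle.} The only step requiring genuine care is the lemma: making the inclusion--exclusion recovery of a one-parameter barcode from the rank function rigorous with the correct half-open endpoint conventions, handling intervals that are unbounded or have endpoints at $\pm\infty$, and passing to the $\mathbb{R}$-indexed (non-discrete) setting where small-rectangle ranks must be taken as one-sided limits. I would also flag, but dismiss quickly, the ``degenerate'' lines in $\mathcal{L}$ whose direction vector has some zero coordinates: these are legitimate elements of $\mathcal{L}$, and every argument above applies verbatim because $a \le b$ on such an $L$ still forces $\gamma_L(a) \le \gamma_L(b)$ componentwise. Everything else is bookkeeping once pointwise finite dimensionality is assumed.
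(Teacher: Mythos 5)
Your proof is correct and is the standard argument for this equivalence; note that the paper itself gives no proof of Proposition \ref{rank,fibered}, importing it directly from Sec.~4.2 of the cited reference \cite{botnan2022introduction}, where essentially the same two-directional argument (restriction of the rank function to a line plus interval-decomposability in one direction, and reading ranks off the barcode of the line through $s$ and $t$ in the other) is used. The points you flag as needing care --- the inclusion--exclusion recovery of a one-parameter barcode from its rank function with the correct endpoint conventions, and the inclusion of lines whose direction vector has zero coordinates so that every pair $(s,t)\in\mathcal{H}$ lies on some $L\in\mathcal{L}$ --- are exactly the right ones, and both are handled as you describe.
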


\end{section}

\begin{section}{Exact formula and interpretation of barcode}
\label{sec : Exact formula and interpretation of barcode}
In this section, we explain the Liouville torus in Hamiltonian dynamical systems. To justify its significance in TDA, we review Takens' embedding theorem and
explain sliding window embedding with the theorem. Analyzing time-series data with TDA through sliding window embedding involves inferring the trajectory of particles in the phase space (or state space). Similarly, the analysis of the Liouville torus of time-series data with TDA involves inferring the Liouville torus of particles in the phase space. Then we provide the exact barcode formula of the Liouville torus of time-series data based on the Fourier transform. We examine the properties of the barcode obtained from the Liouville torus.

\subsection{Takens' embedding theorem and Liouville torus}
\label{Continuosization section}
TDA of time-series data often involves converting the given time-series data into a point cloud using sliding window embedding according to Takens' embedding theorem. 
In general, however, it is hard to interpret the barcode constructed through sliding window embedding. 
For example, consider a time-series data given by $f(t) = \cos t + \cos 3t$. 
For this case, the Fourier coefficients in the cosine series are simply $(0, 1, 0, 1, 0, \cdots, 0)$. 
Further consider the case where such data is sampled with a length of $15$ and embedded using the sliding window method with $M=6$ and $\tau = \tfrac{2\pi}{M+1}$. 
We can easily show that its $1$-dimensional barcode has 11 intervals, as shown in Figure~\ref{cosinelength}.

\begin{figure}[h]
    \centering
    \includegraphics[scale=0.4]{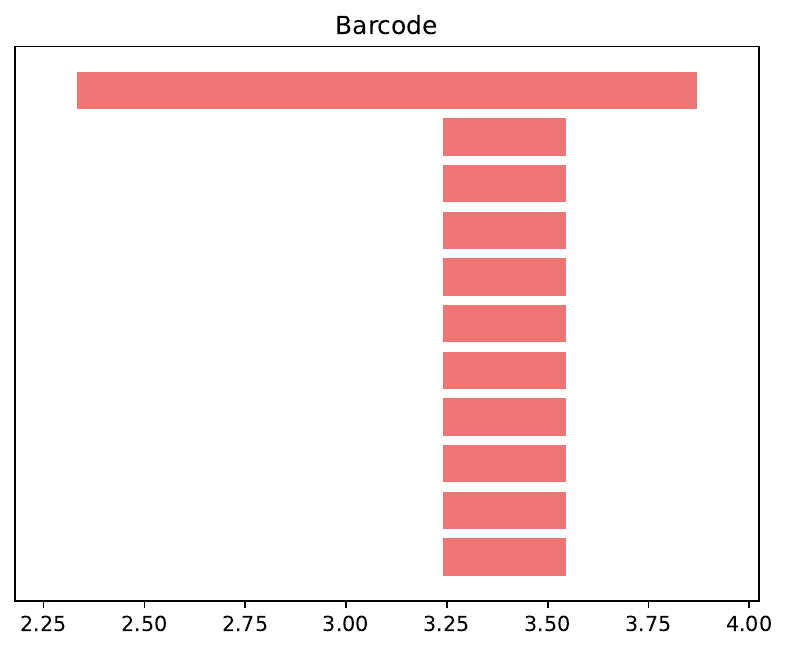}
    \caption {Barcode of $\cos t + \cos 3t$ with length $15$ ($M=6$, $\tau = \tfrac{2\pi}{M+1}$)}
    \label{cosinelength}
\end{figure}

Although the barcode consists of only a small number of intervals, it is not straightforward to understand the meaning of each bar. 
That is, the geometric interpretation of each bar is not easy to make. 
It is also difficult to guess the exact formula of the barcode of the data $f(t)$ from the viewpoint of each mode, i.e., $\cos t$ or $\cos 3t$.

Regarding sliding window embedding, notice that 
Takens' theorem is not limited to sliding window embedding. Thus, 
it is not necessary to exclusively rely on sliding window embedding for the TDA of time-series data. 
The main motivation for utilizing the Liouville torus is that it provides exact formulas with interpretability. This approach enables us to comprehend the information within the barcode unlike the sliding window embedding approach.

\begin{theorem}[Takens' embedding theorem \cite{takens2006detecting}]
Let $M$ be a compact manifold of dimension $m$. For pairs $(\phi, y)$ with $\phi \in Diff^2(M)$, $y \in C^2(M,\mathbb{R})$, it is a generic property that the map $\Phi_{(\phi, y)} : M \rightarrow \mathbb{R}^{2m+1}$, defined by
$$\Phi_{(\phi , y)}(x) = (y(x), y(\phi(x)), \cdots,y(\phi^{2m}(x)))$$
is an embedding. Here `generic' means that such $(\phi,y)$ consists of both an open subset and is dense in $Diff^2(M) \times C^2(M,\mathbb{R})$, and each space is equipped with the $C^2$-topology. We refer to functions $y \in C^2(M,\mathbb{R})$ as measurement functions.
\end{theorem}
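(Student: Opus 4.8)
The plan is to follow the classical transversality argument underlying Whitney-type embedding theorems, adapted to the constrained family of delay maps $\Phi_{(\phi,y)}$. First I would reduce the claim to two assertions: that the set $\mathcal{E}$ of pairs $(\phi,y)$ for which $\Phi_{(\phi,y)}$ is an embedding is (i) open and (ii) dense in $Diff^2(M)\times C^2(M,\mathbb{R})$ with the $C^2$-topology. Openness is the easy half: since $M$ is compact, an injective immersion is automatically an embedding, and both the immersion condition (injectivity of $d\Phi_x$ for every $x$) and global injectivity persist under small $C^1$-perturbations of the map $\Phi$, while $(\phi,y)\mapsto\Phi_{(\phi,y)}$ is continuous into the $C^1$ (indeed $C^2$) topology, so openness transfers back to parameter space.

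For density I would split \emph{embedding} into \emph{immersion} and \emph{injective} and attack each by a parametric version of the Thom transversality theorem. The essential subtlety is that $\Phi_{(\phi,y)}$ is \emph{not} an arbitrary map $M\to\mathbb{R}^{2m+1}$: its coordinates are the iterates $y\circ\phi^{j}$, $j=0,\dots,2m$, which coincide along periodic orbits of $\phi$ of period $\le 2m$, so the naive dimension count can fail exactly there. The first step is therefore to perturb $\phi$ within $Diff^2(M)$ to a diffeomorphism whose periodic points of period $\le 2m$ form a finite set of nondegenerate (hyperbolic) orbits; this is a Kupka--Smale-type genericity statement, which I would simply invoke. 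With such a $\phi$ fixed, I would then vary $y$: away from the finitely many short orbits the coordinates $y(x),y(\phi x),\dots,y(\phi^{2m}x)$ are independent enough that $y\mapsto\Phi_{(\phi,y)}$ is submersive onto the relevant jet space, so transversality yields a residual set of $y$ for which $d\Phi_x$ is injective for all such $x$ and for which the difference map $(x,x')\mapsto\Phi_{(\phi,y)}(x)-\Phi_{(\phi,y)}(x')$ on $(M\times M)\setminus\Delta$ avoids the origin (the count $\dim(M\times M)=2m<2m+1$ does the work, with a neighborhood of the diagonal handled instead by the immersion property just secured). Near each short periodic orbit one checks by an explicit finite-dimensional computation on jets of $y$ at the orbit that generic choices make $\Phi$ an immersion and locally injective there; here the $C^2$ hypothesis gives enough room in the jet spaces.

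The part needing genuine care, rather than routine transversality, is the coupling of the two perturbations at the low-period orbits: after the Kupka--Smale adjustment of $\phi$ one must repair each of the finitely many bad orbits by an \emph{admissible} perturbation of $y$ without undoing the embedding property already obtained elsewhere, and then patch these local repairs together. I would do this with a partition of unity localizing the perturbations of $y$ to disjoint neighborhoods of the short orbits, together with a one-orbit-at-a-time bookkeeping: since each relevant condition is open and the needed perturbations can be taken arbitrarily small, processing the finitely many orbits successively keeps us inside an open dense subset at every stage, and a Baire argument then delivers density of $\mathcal{E}$. I would close by noting that $2m+1$ is precisely the Whitney threshold making both dimension counts strict ($2m-1<2m+1$ for the immersion condition on the unit tangent bundle, $2m<2m+1$ for injectivity off the diagonal), which is why the delay length is taken to be $2m+1$.
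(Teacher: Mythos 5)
The paper offers no proof of this statement at all: it is quoted as Takens' embedding theorem directly from the cited reference \cite{takens2006detecting} and used as a black box to motivate sliding window embedding, so there is no internal argument to compare yours against. Judged against the standard proof in the literature (Takens' original argument), your sketch is a faithful outline: openness from compactness of $M$ plus the stability of the injective-immersion condition under $C^1$-perturbation; density by first putting $\phi$ in general position so that the periodic points of low period form a finite set, then running parametric transversality in $y$ for the immersion condition (count $2m-1<2m+1$ on the projectivized tangent bundle) and for injectivity of the difference map off the diagonal (count $2m<2m+1$); and a localized repair of $y$ near the finitely many short periodic orbits, patched with a partition of unity. This is exactly the architecture of the actual proof, and the identification of the short periodic orbits as the place where the delay map fails to be a generic map into $\mathbb{R}^{2m+1}$ is the right key observation.

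Two details deserve correction if you were to write this out in full. First, the genericity condition you need at the short periodic orbits is not hyperbolicity but simplicity of the spectrum: Takens arranges that the periodic points of period at most $2m+1$ are finite in number and that the eigenvalues of $d\phi^k$ at each such point are \emph{distinct}; it is this distinctness (not hyperbolicity, which is neither necessary nor sufficient here) that lets you choose the jet of $y$ along the orbit so that the delay vector is an immersion and locally injective there. Second, the relevant period bound is $2m+1$ rather than $2m$, since the coordinates $y(x),y(\phi x),\dots,y(\phi^{2m}x)$ involve $2m+1$ iterates and degeneracies among them occur along orbits of period up to that length. With those adjustments your plan matches the known proof; as a blind reconstruction of a deep cited theorem it is sound in structure, but it is a proof sketch of an external result rather than of anything the paper itself establishes.
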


Sliding window embedding is a method used to extract information about a dynamical system $\phi$ and its phase space (or state space) $M$ from measurements. If $M$ is non-compact and we restrict our measurement functions to be proper maps, then we can extend Takens’ embedding theorem to non-compact manifolds \cite{takens2006detecting}. In a nutshell, according to differential topology theory, any smooth function $f: M \rightarrow \mathbb{R}^{2m+1}$ can be approximated by an injective immersion. If $y$ is a proper map, then we can perturb $\Phi_{(\phi, y)}$ to be a proper injective immersion. Finally, we can apply the proposition that a proper injective immersion is an embedding \cite{mukherjee2015differential}. Instead of considering a proper measurement function, we can also focus on the compact subset of $M$. Since our experimental data is finite, it contains dynamical information within a certain compact subset of the phase space. Therefore, we can analyze this compact subset of the phase space \cite{huke2006embedding}. Figure \ref{Takens} shows the schematic illustration of Takens' embedding theorem. 

\begin{figure}[hbt!]
    \centering
    \includegraphics[scale=0.8]{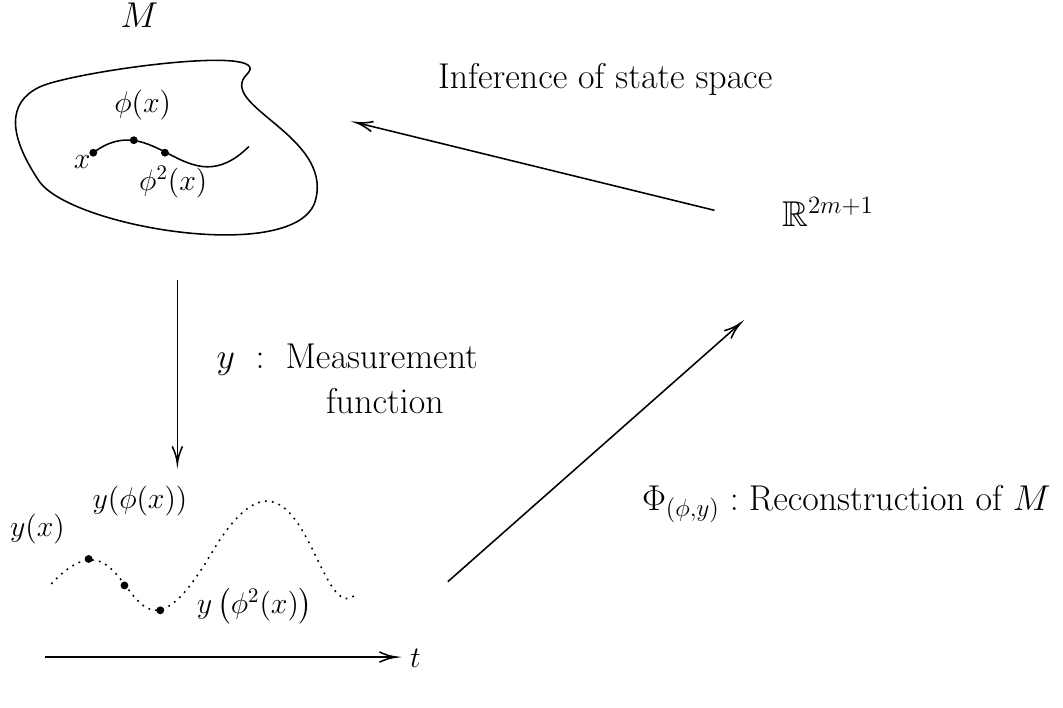}
    \caption{The schematic illustration of Takens' theorem. The figure shows how to infer information of $M$ using experimental data from the unknown dynamical system $(M,\phi)$.}
    \label{Takens}
\end{figure}

\begin{example}[Sliding window embedding of a discrete time-series data] 
Consider a discrete time-series $\left\{ z_t \right\}_{t \in \mathbb{Z}_{\ge 0}}$ generated by a dynamical system $(M, \phi)$, that is, $z_t = y(\phi^t(x))$.
The embedding map is given by $\Phi_{(\phi, y)}(x) = (y(x), y(\phi(x)), \cdots, y(\phi^{2m}(x))) = (z_0, z_1, \cdots, z_{2m})$, and after one iteration of $\phi$, we have $\Phi_{(\phi, y)}(\phi(x)) = (y(\phi(x)), \allowbreak y(\phi^2(x)),  \cdots, y(\phi^{2m+1}(x))) = (z_1, z_2, \cdots, z_{2m+1})$.
In general, $\Phi_{(\phi, y)}(\phi^k(x)) = (z_k, z_{k+1}, \cdots, z_{2m+k})$.
This construction corresponds to the standard sliding window (or time-delay) embedding of discrete time-series, which allows us to recover the topology of the trajectory on $M$ from the observed sequence $\left\{ z_t \right\}_{t \in \mathbb{Z}_{\ge 0}}$.
\end{example}

To understand the Liouville torus, we need to cover the basics of symplectic manifold theory. In brief, a Hamiltonian system is a description of a particle's trajectory using the Hamiltonian function $H : M \rightarrow \mathbb{R}$ and the Hamiltonian differential equation, where $M$ is the phase space of the particle. We have summarized the essential concepts in Appendix \ref{appendix:symplectic} to facilitate our theoretical development.

\begin{definition}[\cite{da2008lectures}]
Smooth functions $f_1, \cdots, f_n \in \mathcal{C}^{\infty}(M, \mathbb{R})$ are said to be independent if $(df_1)_p, \cdots, (df_n)_p$ are linearly independent at all $p$ in some open dense subset of $M$.    
\end{definition}

\begin{definition}[Integrable Hamiltonian system \cite{da2008lectures}]
A Hamiltonian system $(M,\omega,H)$ is called (completely) integrable if for $n = {1 \over 2}\dim M$, there are independent smooth functions $f_1 =H, f_2, \cdots, f_n \in C^{\infty}(M,\mathbb{R})$ such that $\left\{ f_i, f_j \right\} = 0$ for all $i,j$, where $(M,\omega)$ is a symplectic manifold, $H : M \rightarrow \mathbb{R}$ is a smooth map and $\left\{ \cdot, \cdot \right\}$ is the Poisson bracket.    
\end{definition}

Integrable Hamiltonian systems are known to have a maximal invariant set along the integral curves of the Hamiltonian vector field $X_H$. This deduction can be made using Theorem \ref{thm:conservity} and basic symplectic linear algebra (cf. p.8, \cite{da2008lectures}).

\begin{theorem}[Liouville–Arnold theorem, \cite{da2008lectures}]
\label{thm:Liouville-Arnold}
Let $(M,\omega, H)$ be an integrable Hamiltonian system and $n = {1 \over 2}\dim M$. Suppose $\mathbf{c} = (c_1,\cdots,c_n) \in \mathbb{R}^n$ is a regular value of $F=(f_1=H,\cdots,f_n)$ and denote the level set by $L_{\mathbf{c}} = F^{-1}(\mathbf{c})$. Then 
\begin{enumerate}
    \item $L_{\mathbf{c}}$ is a (Lagrangian) submanifold.
    \item If $L_{\mathbf{c}}$ is furthermore compact and connected, it is diffeomorphic to the $n$-torus $T^{n}$. 
    \item There exist (local) coordinates $(\theta _{1},\cdots ,\theta _{n}, I_{1},\cdots ,I_{n})$  on $M$ such that $\dot{\theta }_{i} = \omega_i$, where $\omega_i$ is a constant and $\dot{I}_i = 0$ on $L_{\mathbf{c}}$, i.e. $L_{\mathbf{c}} = L_{\mathbf{c}}(\theta _{1},\cdots ,\theta _{n})$. These coordinates are called angle-action coordinates.
\end{enumerate}    
\end{theorem}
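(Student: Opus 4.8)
The plan is to prove the three conclusions in turn, deriving everything from two structural observations about a regular level set: the Hamiltonian vector fields $X_{f_1},\dots,X_{f_n}$ are tangent to $L_{\mathbf c}$, and they pairwise commute. For \textbf{(1)}, I would first invoke the regular value theorem: since $\mathbf c$ is a regular value of $F=(f_1,\dots,f_n)$, the set $L_{\mathbf c}=F^{-1}(\mathbf c)$ is an embedded submanifold of dimension $2n-n=n$, with $T_pL_{\mathbf c}=\bigcap_j\ker(df_j)_p$. Writing $\iota_{X_{f_i}}\omega=df_i$, the hypothesis $\{f_i,f_j\}=0$ forces each $X_{f_i}$ to annihilate every $df_j$, so $X_{f_i}$ is tangent to $L_{\mathbf c}$; and since $\omega$ is non-degenerate, independence of the $df_i$ along $L_{\mathbf c}$ is equivalent to independence of the $X_{f_i}$, which therefore span the $n$-dimensional space $T_pL_{\mathbf c}$ at every $p$. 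Then $\omega(X_{f_i},X_{f_j})=\{f_i,f_j\}=0$ shows $T_pL_{\mathbf c}$ is isotropic; being of dimension $\tfrac12\dim M$, it is Lagrangian.

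For \textbf{(2)}, I would use that $[X_{f_i},X_{f_j}]=\pm X_{\{f_i,f_j\}}=0$, so the $X_{f_i}$ commute, and that compactness of $L_{\mathbf c}$ makes their flows complete; hence $(t_1,\dots,t_n)\mapsto\phi_1^{t_1}\circ\cdots\circ\phi_n^{t_n}$ defines a smooth action of $\mathbb R^n$ on $L_{\mathbf c}$. Linear independence of the generators makes the action locally free, so every orbit is open, and connectedness of $L_{\mathbf c}$ then forces a single orbit; fixing a base point identifies $L_{\mathbf c}\cong\mathbb R^n/\Gamma$ where $\Gamma$ is the discrete isotropy subgroup, and compactness forces $\Gamma$ to be a rank-$n$ lattice, so $L_{\mathbf c}\cong\mathbb R^n/\mathbb Z^n=T^n$.

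For \textbf{(3)}, I would carry out the construction of \textbf{(2)} over a neighborhood $U$ of $\mathbf c$ in the base: the period lattice $\Gamma_{\mathbf c'}$ varies smoothly (implicit function theorem applied to the period map), so one can choose a smooth basis $e_1(\mathbf c'),\dots,e_n(\mathbf c')$ of it, and rescaling the $\mathbb R^n$-coordinate to this basis yields angle variables $\theta_1,\dots,\theta_n\in\mathbb R/2\pi\mathbb Z$ on each fiber. For the actions, shrink $U$ so that $F^{-1}(U)$ admits a primitive $\alpha$ with $d\alpha=\omega$, take $1$-cycles $\gamma_1,\dots,\gamma_n\subset L_{\mathbf c'}$ dual to the $e_i$, and set $I_i=\tfrac1{2\pi}\oint_{\gamma_i}\alpha$; since each $L_{\mathbf c'}$ is Lagrangian, $\alpha|_{L_{\mathbf c'}}$ is closed, so $I_i$ depends only on $\mathbf c'$, and a Stokes-type computation shows $(\theta_1,\dots,\theta_n,I_1,\dots,I_n)$ is a symplectic chart, i.e.\ $\omega=\sum_i d\theta_i\wedge dI_i$. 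Finally, since $H=f_1$ and each $f_j$ is a function of $(I_1,\dots,I_n)$ on the fibers, $H$ depends on $I$ alone, so Hamilton's equations read $\dot\theta_i=\partial H/\partial I_i=:\omega_i(I)$ and $\dot I_i=-\partial H/\partial\theta_i=0$, with $\omega_i$ constant along $L_{\mathbf c}$ because $I$ is.

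The hard part will be \textbf{(3)}: constructing the action variables and verifying that $(\theta,I)$ is genuinely a canonical coordinate system. One must check that the period map is a local diffeomorphism (so that the $I_i$ are transverse to the $\theta_i$), that the $I_i$ Poisson-commute and pair with the $\theta_j$ correctly, and that the cycle basis can be chosen consistently over a whole neighborhood of $\mathbf c$ --- all of which rely on the Lagrangian structure established in \textbf{(1)}. By comparison, \textbf{(1)} and the torus identification in \textbf{(2)} are routine once the commuting, everywhere-independent vector fields $X_{f_i}$ are in hand.
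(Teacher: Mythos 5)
The paper does not prove this statement: it is quoted verbatim as a classical result from the cited lecture notes of Cannas da Silva, and the only remark the paper makes in its vicinity is that the invariance of the level sets follows from Theorem \ref{thm:conservity} together with basic symplectic linear algebra. So there is no in-paper proof to compare against, and your proposal should be judged on its own. As such, it is the standard textbook argument and the outline is sound: part (1) correctly combines the regular value theorem with $df_j(X_{f_i})=\omega(X_{f_j},X_{f_i})=\{f_j,f_i\}=0$ and the isomorphism $X\mapsto \iota_X\omega$ to get an isotropic $n$-dimensional tangent space, hence Lagrangian; part (2) is the usual locally-free commuting $\mathbb{R}^n$-action argument, with compactness forcing the isotropy lattice to have full rank. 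Two points in part (3) deserve to be made explicit rather than asserted. First, the existence of the primitive $\alpha$ with $d\alpha=\omega$ on $F^{-1}(U)$ is not automatic from shrinking $U$: since $F$ is a proper submersion near $L_{\mathbf c}$, Ehresmann's theorem gives $F^{-1}(U)\cong U\times T^n$, which retracts onto the torus fiber, and $H^2(T^n)\neq 0$ for $n\ge 2$; you need the Lagrangian condition from (1) to conclude $[\omega]=0$ on $F^{-1}(U)$ and hence exactness. Second, the claim that the period map (equivalently $\mathbf c'\mapsto I(\mathbf c')$) is a local diffeomorphism is exactly what makes $(\theta,I)$ a coordinate system and $f_j=f_j(I)$; you flag it as something to check, which is honest, but it is the crux of (3) and the step where the Stokes computation pairing $\theta_i$ with $I_j$ actually lives. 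With those two items filled in, the proof is complete and agrees with the source the paper cites.
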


{\color{black}
\begin{definition}[Liouville torus]
In (2) of Theorem \ref{thm:Liouville-Arnold}, we call $L_{\mathbf{c}}$ the Liouville torus in the integrable Hamiltonian system.
\end{definition}
}

If we assume time-series data $f : \mathbb{T} \rightarrow \mathbb{R}$ is obtained from a measurement $y : M \rightarrow \mathbb{R}$ of an integrable Hamiltonian system $(M, \omega, H)$, then $f$ can be expressed by $f(t) = y\left(\phi_H^t(x_0)\right)$, where $\phi_H^t$ is the Hamiltonian flow and $x_0 \in M$ is the initial point.

\begin{example}[Example 2.1.2 \cite{benatti2009dynamics}]
\label{ex:two harmonic}
Consider two uncoupled one-dimensional harmonic oscillators described by $(\mathbf{q},\mathbf{p}) = (q_1,q_2,p_1,p_2) \in M = \mathbb{R}^4$ and the Hamiltonian
$$H(\mathbf{q},\mathbf{p}) = \underbrace{{p_1^2 \over 2m_1} + {m_1\omega_1^2 \over 2} q_1^2}_{H_1(\mathbf{q},\mathbf{p})} + \underbrace{{p_2^2 \over 2m_2} + {m_2\omega_2^2 \over 2} q_2^2}_{H_2(\mathbf{q},\mathbf{p})}.$$
The trajectories that conserve the energy $H$ and $H_1$ (or equivalently, $H_1$ and $H_2$) for each harmonic oscillator are confined to the 2-torus $\mathbb{T}^2 := \left\{ \boldsymbol\theta = (\theta_1, \theta_2) : \theta_L \in [0, 2\pi) \right\}$, where $q_L = \sqrt{{2I_L \over m_L \omega_L}} \cos \theta_L, p_L = \sqrt{2m_L\omega_LI_L} \sin \theta_L$ and $I_L(\mathbf{q},\mathbf{p}) := H_L(\mathbf{q},\mathbf{p})/ \omega_L = {p^2_L \over 2m_L\omega_L} + {m_L\omega_L \over 2} q_L^2$ for $L=1,2$. This 2-torus is the Liouville torus in this Hamiltonian system. Note that $H(\mathbf{q},\mathbf{p}) = \omega_1 I_1 + \omega_2 I_2$. Let $f_1 = H, f_2 = H_1$ and $\mathbf{I} = (I_1,I_2), \boldsymbol\omega = (\omega_1, \omega_2)$, then we get the angle-action coordinate $(\boldsymbol\theta, \mathbf{I})$. Using the Hamiltonian equation, we can check $\dot{\boldsymbol\theta}  = {\partial H \over \partial \mathbf{I}} = \boldsymbol\omega$ and $\dot{\mathbf{I}} = -{\partial H \over \partial \boldsymbol\theta} = 0$. This means that the trajectory of a particle starting at $\boldsymbol\theta_0$ is governed by the Hamiltonian flow $\boldsymbol\theta(t) = \boldsymbol\theta_0 + \boldsymbol\omega t$ and preserves the action $\mathbf{I}$. In other words, in the motion, the energy of each harmonic oscillator is conserved. It is worth noting that if the ratio of $\omega_1$ and $\omega_2$ is a rational number, then the trajectory is a closed curve on the Liouville torus; otherwise, the trajectory fills the Liouville torus. This is related to the sliding window embedding of a quasi-periodic time-series data \cite{gakhar2023sliding}. However, in this paper, we only focus on periodic time-series data (trajectory is closed curve).
\end{example}

Now, we define the Liouville torus of time-series data, which is the main tool of this research.
\begin{definition}[Liouville torus of time-series data]
\label{def:Liouville torus}
Given a time-series data $f : \mathbb{T} \rightarrow \mathbb{R}$, we define the ($N$-truncated) Liouville torus $\Psi_{f,N}$ of $f$ as the Liouville torus of $H(\mathbf{q},\mathbf{p}) = \sum\limits_{L=1}^N {L \over 2}(p_L^2 + q_L^2)$ with each $L$th harmonic oscillator preserving $I_L = {(r_L^f)^2 \over 2}$, where $r_L^f = 2|\hat f(L)|$. From now on, unless otherwise specified, we abbreviate $\Psi_{f,N}$ as $\Psi_f$.
\end{definition}

\begin{theorem}
\label{thm:sliding comparison}
Sliding window embedding of time-series data can be formulated by the trajectory of uncoupled one-dimensional harmonic oscillators.
\end{theorem}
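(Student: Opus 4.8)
The plan is to exhibit, for a periodic time-series $f$, an explicit uncoupled Hamiltonian system whose trajectory, under a suitable measurement function, reproduces the (centered, rescaled) sliding window embedding $\psi_f(t)$ of Equation~\eqref{eq:N-torus}. The starting point is the decomposition $\psi_{f}(t) = \sum_{L=1}^{N} r_L^f(\cos(Lt)\tilde x_L + \sin(Lt)\tilde y_L)$ from Theorem~\ref{N-torus}, which already expresses the embedding as a sum of $N$ independent planar circular motions, one in each $L$-plane $P_L = \operatorname{span}\{\tilde x_L, \tilde y_L\}$, each traversed at angular frequency $L$ and radius $r_L^f$. This is exactly the shape of the trajectory data produced by $N$ uncoupled one-dimensional harmonic oscillators.

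First I would recall from Example~\ref{ex:two harmonic} (in its $N$-oscillator form, matching Definition~\ref{def:Liouville torus}) the Hamiltonian $H(\mathbf q,\mathbf p) = \sum_{L=1}^N \tfrac{L}{2}(p_L^2 + q_L^2)$ on $M = \mathbb{R}^{2N}$, whose $L$th component has unit mass and angular frequency $\omega_L = L$. In the angle-action coordinates $(\boldsymbol\theta,\mathbf I)$ with $q_L = \sqrt{2I_L}\cos\theta_L$, $p_L = \sqrt{2I_L}\sin\theta_L$, the Hamiltonian flow is $\theta_L(t) = \theta_{L,0} + Lt$, $I_L(t) = I_L(0)$, so prescribing the conserved action $I_L = (r_L^f)^2/2$ forces $q_L(t) = r_L^f\cos(Lt + \theta_{L,0})$. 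Second, I would define a measurement-type map (really an embedding of the level set into $\mathbb{R}^{M+1}$) that records the block $(q_L, p_L)$ in the $L$-plane coordinates of $P_L$; because the $P_L$ are mutually orthogonal (Proposition~\ref{embedding dimension} with $M = 2N$ guarantees $u_0,u_1,v_1,\dots,u_N,v_N$ independent, hence the $\tilde x_L,\tilde y_L$ form an orthonormal family), assembling these blocks and choosing initial phases $\theta_{L,0}$ compatible with the Fourier phases of $f$ reproduces $\psi_f(t)$ up to the rigid expansion-and-translation already noted after Equation~\eqref{eq:N-torus}, which does not change the topology of the point cloud. Thus the image of the Hamiltonian trajectory on the Liouville torus $\Psi_f$ coincides, as a subset of $\mathbb{R}^{M+1}$ up to isometry, with $SW_{M,\tau}S_N f$.

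The main obstacle, and the point that needs care rather than a routine check, is the bookkeeping of phases and the precise sense of ``can be formulated by'': I must match not merely the shape but the parametrization, reconciling the cosine/sine combination $a_L\cos(Lt) + b_L\sin(Lt) = r_L^f\cos(Lt - \varphi_L)$ with the oscillator output $r_L^f\cos(Lt + \theta_{L,0})$, i.e.\ setting $\theta_{L,0} = -\varphi_L$, and then verifying that the orthonormal frames $\{\tilde x_L,\tilde y_L\}$ from Theorem~\ref{N-torus} can legitimately be taken as the coordinate planes of the $N$ oscillators. Since the claim is stated qualitatively, I would phrase the conclusion as: the (truncated) sliding window embedding of $f$ is, up to an isometry of $\mathbb{R}^{M+1}$, the image of a trajectory on the Liouville torus $\Psi_f$ of the uncoupled harmonic-oscillator Hamiltonian of Definition~\ref{def:Liouville torus}, with the measurement function the inclusion recording each oscillator's planar coordinates; equivalently, each Fourier mode $\cos(Lt),\sin(Lt)$ of $f$ corresponds to one uncoupled oscillator of frequency $L$, amplitude $r_L^f$, and the topology of $SW_{M,\tau}S_Nf$ is that of the torus $\prod_{L=1}^N \pi_L\Psi_f$. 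This also sets up Theorem~\ref{cor:barcode}, where the product structure is exploited via the persistent Künneth formula (Corollary~\ref{Persistent Kunneth formula}).
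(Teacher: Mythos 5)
Your proposal is correct and follows essentially the same route as the paper's proof: decompose $\psi_f(t)=\sum_{L=1}^N r_L^f(\cos(Lt)\tilde x_L+\sin(Lt)\tilde y_L)$ into planar circular motions, realize each as the trajectory of a harmonic oscillator with frequency $\omega_L=L$ and conserved action $I_L=(r_L^f)^2/2$, and observe that the linear map sending the standard phase-space basis to the orthonormal frames $\{\tilde x_L,\tilde y_L\}$ is an isometry. The phase bookkeeping you flag as the delicate point is handled in the paper exactly as you suspect: the Fourier phases are already absorbed into the definition of $\tilde x_L,\tilde y_L$ in Theorem \ref{N-torus}, so the initial condition $\boldsymbol\theta_0=\mathbf{0}$ suffices.
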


\begin{proof}
Note that the sliding window embedding of time-series data is given by $\psi_{f}(t) = \sum\limits_{L=1}^{N} r^{f}_{L}(\cos(Lt) \tilde{x}_{L} + \sin(Lt)\tilde{y}_{L})$. Consider a Hamiltonian system composed of uncoupled one-dimensional harmonic oscillators $H(\mathbf{q},\mathbf{p}) = \sum\limits_{L=1}^N{p_L^2 \over 2m_L} + {m_L\omega_L^2 \over 2} q_L^2$. Let the initial condition be $\boldsymbol\theta_0 = (0,\cdots,0)$, the frequency vector $\boldsymbol\omega = (1,\cdots,N)$, and $m_L = {1 \over \omega_L}$ for $L=1, \cdots, N$. Then, similar to Example \ref{ex:two harmonic}, its trajectory is $q_L(t) = \sqrt{2I_L}\cos Lt$ and $p_L(t) = \sqrt{2I_L}\sin Lt$ in the phase space $(\mathbf{q}, \mathbf{p})$. Therefore, $\psi_f$ can be formulated by the trajectory of such Hamiltonian system that preserves the condition $I_L = {(r_L^f)^2 \over 2}$ for each harmonic oscillator. In the sliding window embedding space, we choose the orthonormal basis $\left\{ \tilde{x}_{L}, \tilde{y}_{L} \right\}_{L=1}^{N}$. The linear map from this orthonormal set to the standard basis on $(\mathbf{q}, \mathbf{p})$ is an isometry, meaning that $\psi_f$ and the trajectory are isometric.
\end{proof}

{\color{black}From this theorem, we know $\psi_{f,N} \subseteq \Psi_{f,N}$ and when $N=1$, $\psi_{f,N} = \Psi_{f,N}$ holds. Since the Vietoris-Rips complex of two metric spaces that are isometric to each other is the same, the following corollary makes it convenient to handle the Liouville torus of time-series data.

\begin{corollary}
\label{cor:Liouville isomorphic}
The Liouville torus $\Psi_f$ is isometric to $r_1^f \cdot \mathbb{S}^1 \times \cdots \times r_N^f \cdot \mathbb{S}^1$. Therefore, we can identify both of them.  
\end{corollary}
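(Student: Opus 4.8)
The plan is to make the Liouville torus $\Psi_f$ completely explicit as a subset of phase space, recognize its product structure, and then read off the claimed isometry one circle factor at a time. First I would unwind Definition \ref{def:Liouville torus}: $\Psi_f$ is the Liouville torus of $H(\mathbf{q},\mathbf{p}) = \sum_{L=1}^N \frac{L}{2}(p_L^2+q_L^2)$ on $M=\mathbb{R}^{2N}$, where the $L$th oscillator preserves $I_L = (r_L^f)^2/2$. Matching $H=\sum_L H_L$ with $H_L = \frac{L}{2}(p_L^2+q_L^2) = \frac{p_L^2}{2m_L} + \frac{m_L\omega_L^2}{2}q_L^2$ forces $m_L = 1/L$ and $\omega_L = L$, exactly as in the proof of Theorem \ref{thm:sliding comparison}. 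Following the recipe of Example \ref{ex:two harmonic}, the action variable is $I_L = H_L/\omega_L = \frac{1}{2}(p_L^2+q_L^2)$, so the conserved condition $I_L = (r_L^f)^2/2$ is simply $q_L^2 + p_L^2 = (r_L^f)^2$.

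Hence $\Psi_f = \{(\mathbf{q},\mathbf{p})\in\mathbb{R}^{2N} : q_L^2+p_L^2 = (r_L^f)^2,\ L=1,\dots,N\} = C_1 \times \cdots \times C_N$, where $C_L := \{(q_L,p_L)\in\mathbb{R}^2 : q_L^2+p_L^2 = (r_L^f)^2\}$. Assuming each $r_L^f>0$ (a degenerate factor with $r_L^f=0$ collapses to a point and may be discarded), the value $\mathbf{c} = ((r_1^f)^2/2,\dots,(r_N^f)^2/2)$ is a regular value of $(I_1,\dots,I_N)$ and the level set is compact and connected, so Theorem \ref{thm:Liouville-Arnold} confirms that $\Psi_f$ is an $N$-torus — though this is already transparent from the product description above. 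Each circle $C_L$ is the image of the unit circle $\mathbb{S}^1\subset\mathbb{R}^2$ under the dilation $x\mapsto r_L^f x$, which scales the Euclidean distance on $\mathbb{R}^2$ by the factor $r_L^f$; therefore $C_L$, with the metric induced from $\mathbb{R}^2$, is isometric to $r_L^f\cdot\mathbb{S}^1$.

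Finally I would assemble the pieces: the coordinate projection $\pi_L:\Psi_f\to\mathbb{R}^2$ onto the $L$th $(q,p)$-plane has image exactly $\pi_L\Psi_f = C_L$, and $\Psi_f$ is the Cartesian product of the $C_L$. Equipping products with the maximum metric throughout — the convention under which Proposition \ref{product rips} and Corollary \ref{Persistent Kunneth formula} are stated, and the one we use for $\Psi_f$ so that $\mathcal{R}_\epsilon(\Psi_f) = \mathcal{R}_\epsilon(\pi_1\Psi_f)\times\cdots\times\mathcal{R}_\epsilon(\pi_N\Psi_f)$ — the identity map $\Psi_f\to C_1\times\cdots\times C_N$ is an isometry, and composing with the factorwise isometries $C_L\cong r_L^f\cdot\mathbb{S}^1$ yields $\Psi_f\cong r_1^f\cdot\mathbb{S}^1\times\cdots\times r_N^f\cdot\mathbb{S}^1$. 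The only points needing care are bookkeeping-level rather than analytic: verifying that $\mathbf{c}$ is a regular value so the torus interpretation is legitimate (equivalently, that no $r_L^f$ vanishes), and being explicit that ``product'' here means the product with the maximum metric, since the $\ell^2$ metric inherited from the ambient $\mathbb{R}^{2N}$ would give a bi-Lipschitz but genuinely different metric space.
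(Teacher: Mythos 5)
Your proposal is correct and follows exactly the route the paper (implicitly) intends: the corollary is stated without proof because, after unwinding Definition \ref{def:Liouville torus} via Example \ref{ex:two harmonic} and Theorem \ref{thm:sliding comparison}, the level set $I_L=(r_L^f)^2/2$ is literally the circle $q_L^2+p_L^2=(r_L^f)^2$ in the $L$th plane, and the product structure gives the isometry factorwise. Your added care about the degenerate case $r_L^f=0$ and about which product metric ($d_{max}$, per Section \ref{sec:choice of metric}) realizes the isometry is consistent with the paper's conventions and fills in the only details the authors leave unsaid.
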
}

Table \ref{table:comparison} summarizes the proof of Theorem \ref{thm:sliding comparison}. The degree of the truncated Fourier series corresponds to the number of uncoupled harmonic oscillators, each with a frequency of a multiple of the fundamental frequency. We note that we adjust $\tau$ to ensure the circular shape of the data in  the sliding window embedding space.  This can be done by controlling the mass of each harmonic oscillator. The radius of the circle 
associated with the $L$th Fourier mode corresponds to the conserved energy of the harmonic oscillator.
\begin{table}[h]
    \centering
\begin{tabular}{lll}
Sliding window embedding      & $\longleftrightarrow$ & Uncoupled one-dimensional harmonic oscillators \\ \hline
$N$th truncated Fourier series & $\longleftrightarrow$ & $\omega = (1, \cdots, N )$                            \\ 
Control $\tau$                     & $\longleftrightarrow$ & Control $\mathbf{m} = (m_1,\cdots, m_N )$                  \\ 
$r_L^f$                            & $\longleftrightarrow$ & $I_L$                                             \\ \hline
\end{tabular}
    \caption{Summary of the relationship between sliding window embedding and uncoupled one-dimensional harmonic oscillators.}
    \label{table:comparison}
\end{table}

Figure \ref{fig:Liouville_torus} illustrates the difference between the sliding window embedding of the data and the Liouville torus, which contains the sliding window embedding. According to Takens' embedding theorem, sliding window embedding focuses on the shape of the trajectory (or the orbit) within the phase space. On the other hand, the Liouville torus focuses on the invariance of the trajectory (or the orbit) within the phase space.  

Since we have the Vietoris-Rips barcode formula of $\mathbb{S}^1$ in Theorem \ref{barcode of circle}, it enables us to find the exact formula of barcode of the Liouville torus, unlike the sliding window embedding, and provides a more concise interpretation of the barcode. It is important to note that the two different time-series data on the same Liouville torus are identical topologically when their conserved quantities are equal even though the initial conditions are different.

\begin{figure}
    \centering
    \includegraphics{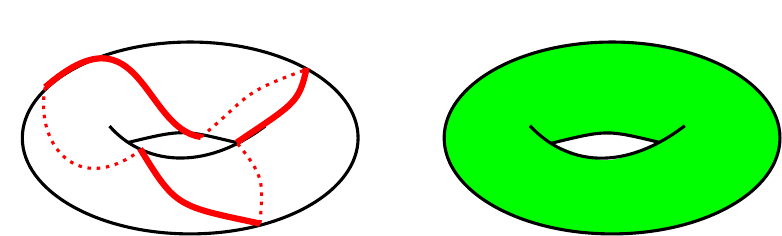}
    \caption{Left: Sliding window embedding of time-series data $\psi_f$, Right: Corresponding Liouville torus $\Psi_f$.}
    \label{fig:Liouville_torus}
\end{figure}

\subsection{Barcode in different metric spaces} 
\label{sec:choice of metric}

Note that in Theorem \ref{thm:persistent-kunneth} the product space is equipped with maximum metric. Thus, to apply Theorem \ref{thm:persistent-kunneth} to $\Psi_{f}$ for arbitrary $f$, we should consider maximum metric on $\Psi_{f}$. But we  note that the main characteristic of TDA is not metric but topology. Two different but topologically equivalent metric functions induce different hidden structures (e.g. Vietoris-Rips complex) and manifold inferences while those two metrics induce same topology. We explain this by the following example. 

\begin{example} 
\label{Different choice of metric}
Consider a  point cloud, as shown in the left in Figure \ref{differentmetric},  that is composed of four points in $\mathbb{R}^{2}$. In the figure, we consider two metrics, namely the Euclidean  and  maximum distances, denoted by  $d_2$ and $d_{\max}$, respectively. The right in Figure \ref{differentmetric} shows the generated complexes with filtration. As shown in the figure, with $d_2$ there is an intermediate complex of square while there is no such square with $d_{\max}$. This example shows that different choices of metric induce different Vietoris-Rips complex and barcode, {\color{black}while the topology of the space ($\mathbb{R}^2$) containing the point cloud is unchanged.}
\begin{figure}[h]
    \centering
    \includegraphics{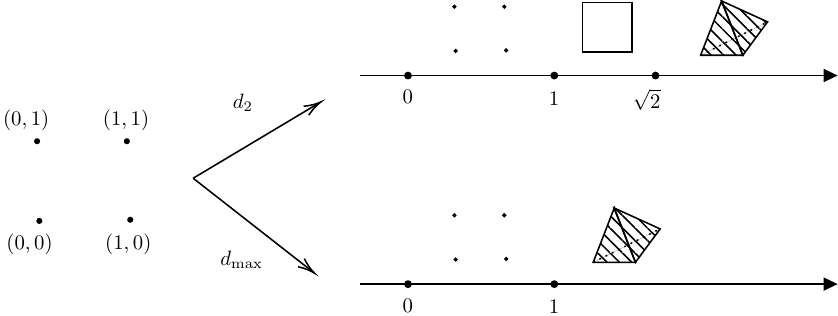}
    \caption{The given point cloud (left) and the corresponding Vietoris-Rips complexes with filtration for $d_2$ and $d_{\max}$ where $d_2$ and $d_{\max}$ denote the Euclidean and maximum metrics, respectively.}
    \label{differentmetric}
\end{figure}
\end{example}

Thus, for homological analysis it is not necessary to use the Euclidean metric $d_{2}$. 
Different metrics, however, induce different hidden structures and different topological inferences. 
If a persistent K\"unneth formula is available for the $p$-norm, it yields additional topological inferences for the given point cloud.

Note that our point cloud $\Psi_{f}$ forms an $N$-torus lying in the direct sum 
$P_1 \oplus \cdots \oplus P_N \subset \mathbb{R}^{M+1}$. 
Here $P_L$ denotes the $L$-plane, and since in Section~\ref{sec:Defintions} we assumed $\tau = \tfrac{2\pi}{M+1}$ with $M=2N$, 
the hypothesis of Proposition~\ref{roundness} is satisfied, ensuring that these planes are mutually disjoint except at the origin. 
The two metric spaces $(P_1 \oplus \cdots \oplus P_N, d_{2})$ and $(P_1 \oplus \cdots \oplus P_N, d_{\max})$ are topologically equivalent. Choosing $d_{\max}$ instead of $d_{2}$ does not change the topology of the space but only modifies the filtration, and can therefore be regarded as one of the possible topological inferences mentioned earlier.
In order to apply Theorem~\ref{thm:persistent-kunneth}, 
we analyze $\Psi_f$ within the direct sum space $(P_1 \oplus \cdots \oplus P_N, d_{\max})$.

\subsection{Exact formula and interpretation of barcode} 
\label{sec:Exact formula and interpretation}
In this section, we derive the exact formula of the barcode and provide its interpretation. To do this, first we derive the exact barcode formula of sinusoidal time-series data. First define a projection 
\begin{equation}
\pi_{i_1 i_2 \cdots i_{N}} : \mathbb{R}^{M+1} \rightarrow P_{i_1} \oplus \cdots \oplus P_{i_{N}}
\label{projection map}
\end{equation} 
by 
\begin{equation}
\pi_{i_1 i_2 \cdots i_{N}}(x) = \sum\limits_{j=1}^{N} c_{i_j} \tilde{x}_{i_j} + d_{i_j} \tilde{y}_{i_j}
\end{equation}
where $x = c_{1} \tilde{x}_{1} + d_{1} \tilde{y}_{1} + \cdots + c_{N} \tilde{x}_{N} + d_{N} \tilde{y}_{N}$ and $\tilde{x}_{i_j}$ and $\tilde{y}_{i_j}$ are defined in Theorem \ref{N-torus}. Our concerned point cloud is the Liouville torus $\Psi_{f}$ of the $N$th Fourier truncated time-series data \textcolor{black}{$f$}. But note that  Theorem \ref{thm:persistent-kunneth}
holds for
finite metric spaces. Therefore, we here generalize Theorem \ref{thm:persistent-kunneth}
for totally bounded metric spaces.

\begin{definition}[Correspondence, \cite{chazal2014persistence}]
We state that $C \subseteq X \times Y$ is a correspondence if for every $x \in X$, there exists $y_x \in Y$ such that $(x,y_x) \in C$ and for every $y \in Y$, there exists $x_y \in X$ such that $(x_y, y) \in C$.
\end{definition}

\begin{definition}[Gromov-Hausdorff distance, \cite{chazal2014persistence}] \label{Gromov-Hausdorff distance} Let $(X,d_X)$ and $(Y,d_Y)$ be metric spaces. Distortion of a correspondence $C \subseteq X \times Y$ is defined by $$dis(C) = \sup \left\{\left| d_{X}(x,\tilde{x}) - d_{Y}(y,\tilde{y}) \right| : (x,y),(\tilde{x},\tilde{y}) \in C  \right\}.$$
The Gromov-Hausdorff distance between $(X,d_X)$ and $(Y,d_Y)$ is defined by
$$d_{GH}(X,Y) = {1 \over 2}\inf	\left\{ dis(C) : C \subseteq X \times Y \mbox{ is a correspodence} \right\}.$$
\end{definition}

\begin{definition}[Totally boundedness]
A metric space $(X,d)$ is called totally bounded metric space if for every $r>0$, there are finite numbers of elements $x_1,\cdots,x_n$ such that $X = \bigcup\limits_{i=1}^{n} B_r(x_{i})$, where $B_r(x) := 	\bigl\{ y \in X $: $d(x,y)<r \bigr\}$.
\end{definition}

\begin{proposition}[Theorem 5.2, \cite{chazal2014persistence}]
\label{app:stability theorem}
Let $X,Y$ be totally bounded metric spaces. Then
$$d_B\left(\mathsf{bcd}_n^{\Rip}(X),\mathsf{bcd}_n^{\Rip}(Y) \right) \le 2 d_{GH}(X,Y).
$$
\end{proposition}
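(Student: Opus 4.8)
The plan is to convert a correspondence of small distortion into an additive interleaving of the Vietoris--Rips persistence modules, and then appeal to the algebraic stability (isometry) theorem. Fix $\eta > d_{GH}(X,Y)$; by Definition \ref{Gromov-Hausdorff distance} there is a correspondence $C \subseteq X \times Y$ with $dis(C) < 2\eta =: \delta$. Choosing, for each $x \in X$, some $y_x$ with $(x,y_x) \in C$ defines a (set) map $\phi : X \to Y$, and symmetrically a map $\psi : Y \to X$; continuity is not needed.

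First I would check that $\phi$ and $\psi$ are simplicial maps between shifted Rips complexes. If $\{x_0,\dots,x_k\} \in \mathcal{R}_\epsilon(X)$, then $d_Y(\phi(x_i),\phi(x_j)) \le d_X(x_i,x_j) + dis(C) < \epsilon + \delta$, so $\phi$ carries simplices of $\mathcal{R}_\epsilon(X)$ to simplices of $\mathcal{R}_{\epsilon+\delta}(Y)$ (repeated vertices being harmless, as the image spans a genuine face), and likewise $\psi : \mathcal{R}_\epsilon(Y) \to \mathcal{R}_{\epsilon+\delta}(X)$. Next, the composite $\psi\circ\phi : \mathcal{R}_\epsilon(X) \to \mathcal{R}_{\epsilon+2\delta}(X)$ is \emph{contiguous} to the inclusion: for a simplex $\sigma = \{x_0,\dots,x_k\}$ of $\mathcal{R}_\epsilon(X)$, every pairwise distance within $\sigma \cup \psi\phi(\sigma)$ is less than $\epsilon+2\delta$ --- the within-$\sigma$ terms are $<\epsilon$, the within-$\psi\phi(\sigma)$ terms are $<\epsilon+2\delta$, and the cross terms satisfy $d_X(x_i,\psi\phi(x_j)) \le d_Y(\phi(x_i),\phi(x_j)) + dis(C) < \epsilon+2\delta$ using that $(x_i,\phi(x_i))$ and $(\psi\phi(x_j),\phi(x_j))$ lie in $C$. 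Hence $\sigma\cup\psi\phi(\sigma) \in \mathcal{R}_{\epsilon+2\delta}(X)$, so $\psi\phi$ and the inclusion induce the same map on $H_n$; the symmetric statement holds for $\phi\psi$. Passing to $n$-th homology therefore yields a $\delta$-interleaving between the persistence modules $\{H_n(\mathcal{R}_\epsilon(X))\}$ and $\{H_n(\mathcal{R}_\epsilon(Y))\}$.

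Then I would invoke the algebraic stability theorem: a $\delta$-interleaving of persistence modules forces $d_B \le \delta$ between their diagrams, \emph{provided} the diagrams are well-defined. For finite $X,Y$ this is immediate. For merely totally bounded $X,Y$ one first needs that the Vietoris--Rips modules are q-tame, i.e.\ every structure map $H_n(\mathcal{R}_\epsilon) \to H_n(\mathcal{R}_{\epsilon'})$ with $\epsilon<\epsilon'$ has finite rank; this follows from total boundedness by choosing a finite $r$-net and factoring the structure map through the Rips complex of the net. q-tameness guarantees that $dgm_n^{\mathcal{R}}(X)$ exists and that the bottleneck form of the stability theorem applies. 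Combining, $d_B\!\left(dgm_n^{\mathcal{R}}(X), dgm_n^{\mathcal{R}}(Y)\right) \le \delta = 2\eta$, and letting $\eta \to d_{GH}(X,Y)$ gives the claim.

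The two distance estimates and the contiguity check are routine. The genuine obstacle is the passage to the non-finite (totally bounded) setting: one must establish q-tameness of the Vietoris--Rips module and use the version of the algebraic stability theorem valid for q-tame modules, since only then are the objects $dgm_n^{\mathcal{R}}(X)$ meaningful. I expect to cite the q-tame persistence framework for that last step rather than reprove it, which is exactly why the statement is attributed to \cite{chazal2014persistence}.
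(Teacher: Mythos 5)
Your argument is correct, but note that the paper itself offers no proof of this proposition: it is imported verbatim as Theorem 5.2 of \cite{chazal2014persistence}, so there is nothing internal to compare against. What you have written is a faithful reconstruction of the proof in that reference --- correspondence of distortion $<2\eta$, induced simplicial maps into $\delta$-shifted Rips complexes, contiguity of the composites with the inclusions, hence a $\delta$-interleaving, and then algebraic stability --- and you correctly identify the one non-routine point, namely that for merely totally bounded spaces one must first establish q-tameness of the Rips persistence module (via finite nets) so that the diagrams exist and the q-tame version of the isometry theorem applies; citing the q-tame framework for that step, as you propose, is exactly what the original source does.
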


\begin{proposition}
\label{product totally}
If $(X_1, d_{X_1}),\cdots, (X_k, d_{X_k})$ are totally bounded metric spaces, then $(X_1 \times \cdots \times X_k, d_{\max})$ is also a totally bounded metric space. 
\end{proposition}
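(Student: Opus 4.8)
The plan is to reduce the statement to the one‑dimensional covering property of each factor and then take the product of the finite covers. First I would fix $r>0$. For each index $i \in \{1,\ldots,k\}$, totally boundedness of $(X_i,d_{X_i})$ gives a finite set of centers $x_1^{(i)},\ldots,x_{n_i}^{(i)} \in X_i$ with $X_i = \bigcup_{j=1}^{n_i} B_r^{X_i}(x_j^{(i)})$, where $B_r^{X_i}$ denotes the open $r$‑ball in $X_i$. The candidate finite cover of the product is the collection of product cells
\[
\prod_{i=1}^{k} B_r^{X_i}\bigl(x_{j_i}^{(i)}\bigr), \qquad (j_1,\ldots,j_k) \in \prod_{i=1}^{k}\{1,\ldots,n_i\},
\]
of which there are $\prod_{i=1}^{k} n_i < \infty$ many, centered at the points $\bigl(x_{j_1}^{(1)},\ldots,x_{j_k}^{(k)}\bigr)$.

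The key step is to check that each such product cell is contained in a single $d_{\max}$‑ball of radius $r$ about its center, and conversely that these cells cover $X_1\times\cdots\times X_k$. For the covering direction: given $(y_1,\ldots,y_k)$ in the product, choose for each $i$ an index $j_i$ with $y_i \in B_r^{X_i}(x_{j_i}^{(i)})$, i.e. $d_{X_i}(y_i,x_{j_i}^{(i)}) < r$; then $d_{\max}\bigl((y_1,\ldots,y_k),(x_{j_1}^{(1)},\ldots,x_{j_k}^{(k)})\bigr) = \max_i d_{X_i}(y_i,x_{j_i}^{(i)}) < r$, so $(y_1,\ldots,y_k)$ lies in the $d_{\max}$‑ball of radius $r$ about that product center. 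This shows $X_1\times\cdots\times X_k = \bigcup B_r^{d_{\max}}\bigl(x_{j_1}^{(1)},\ldots,x_{j_k}^{(k)}\bigr)$ over the finite index set above, which is exactly the definition of totally boundedness for $(X_1\times\cdots\times X_k,d_{\max})$.

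I would finish by remarking that it suffices to treat $k=2$ and then iterate, since a finite product can be bracketed as $(X_1\times\cdots\times X_{k-1})\times X_k$ and $d_{\max}$ on the whole product agrees with the $d_{\max}$ built from $d_{\max}$ on the first $k-1$ factors and $d_{X_k}$ on the last — so an induction on $k$ reduces everything to the two‑factor case handled above. Honestly there is no real obstacle here: the only point requiring a moment's care is the compatibility of the maximum metric with the chosen factorization in the inductive step, i.e. that nesting maxima of distances still yields the global maximum, which is immediate from associativity of $\max$. The argument uses nothing beyond the definitions of totally bounded metric space and the maximum metric $d_{\max}$.
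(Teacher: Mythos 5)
Your argument is correct and is essentially identical to the paper's own proof: fix $r>0$, cover each factor by finitely many $r$-balls, and observe that the $d_{\max}$-balls of radius $r$ centered at the product centers cover the product, since the maximum of $k$ quantities each less than $r$ is less than $r$. The closing remark about induction on $k$ is harmless but unnecessary, as the direct product-of-covers argument already handles arbitrary finite $k$.
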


\begin{proof}
Fix $r > 0$. For every $j\in \left\{1,\cdots,n \right\}$, there are $n_j \in \mathbb{N}$ and $x^{j}_{i_j} \in X_j$ for $1 \le i_j \le n_j$ such that 
$$
X_j = \bigcup\limits_{i_j =1}^{n_j} B_r (x^j_{i_j}).
$$
We can show that 
$$X_1 \times \cdots \times X_k = \bigcup\limits_{i_1,\cdots,i_n} B_r	\left( (x^1_{i_1}, \cdots, x^k_{i_k}) \right).$$
$(\supseteq)$ is trivial. \\
$(\subseteq)$ Let $(y_1,\cdots,y_k) \in X_{1} \times \cdots \times X_k$. Then for each $j$, there are $x^{j}_{i_j}$ such that $y_j \in B_r(x^{j}_{i_j})$. \\
$\implies d_{\max} \left( (y_1 , \cdots, y_k), (x^1_{i_1}, \cdots, x^{k}_{i_k}) \right) < r \implies (y_1,\cdots,y_k) \in \bigcup\limits_{i_1,\cdots,i_k} B_r	\left( (x^1_{i_1}, \cdots, x^k_{i_k}) \right)$. Therefore $(X_1 \times \cdots \times X_k, d_{\max})$ is a totally bounded metric space. 
\end{proof}

\noindent

\begin{lemma}[General version of persistent K\"unneth formula \uppercase\expandafter{\romannumeral1}]
\label{totally bounded Persistent Kunneth formula}
Let $(X_1,d_{X_1}),\cdots, (X_k,d_{X_k})$ be totally bounded metric spaces. Then,
\begin{equation}
\label{Eq:general kunneth}
\mathsf{bcd}_{n}^{\Rip}(X_1 \times \cdots \times X_k , d_{\max}) = \left\{ J^{n_1}_1 \cap \cdots \cap J^{n_k}_k : J^{n_j}_j \in \mathsf{bcd}_{n_j}^{\Rip}(X_j,d_{X_j}) \ \text{and} \ \sum\limits_{j=1}^{k} n_j = n \right\}\end{equation}
for all $n\in\mathbb{Z}_{\ge 0}$ and $d_{\max}$ is the maximum metric.
\end{lemma}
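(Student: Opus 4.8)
The plan is to reduce the totally bounded case to the finite case (Corollary \ref{Persistent Kunneth formula}) by a density-plus-stability argument. The key point is that the persistence diagram of a Vietoris--Rips complex is continuous in the Gromov--Hausdorff distance (Proposition \ref{app:stability theorem}), and that a totally bounded metric space can be approximated arbitrarily well by a finite subspace. So the first step is: for each $X_j$ and each $\delta>0$, pick a finite $\delta$-net $F_j^\delta \subseteq X_j$; then $d_{GH}(X_j, F_j^\delta) \le \delta$ (the correspondence being ``$x$ is matched to a net point within distance $\delta$''). By Proposition \ref{product totally} the product $X_1 \times \cdots \times X_k$ with $d_{\max}$ is totally bounded, and moreover one checks directly that $F_1^\delta \times \cdots \times F_k^\delta$ is a finite $\delta$-net of $(X_1 \times \cdots \times X_k, d_{\max})$, so $d_{GH}\big(X_1\times\cdots\times X_k,\ F_1^\delta\times\cdots\times F_k^\delta\big) \le \delta$ as well.

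Next I would apply Corollary \ref{Persistent Kunneth formula} to the finite spaces:
$$
\mathsf{bcd}_{n}^{\mathcal{R}}\big(F_1^\delta \times \cdots \times F_k^\delta, d_{\max}\big) = \left\{ I_1^{n_1} \cap \cdots \cap I_k^{n_k} : I_j^{n_j} \in \mathsf{bcd}_{n_j}^{\mathcal{R}}(F_j^\delta),\ \sum_j n_j = n \right\}.
$$
The left-hand side converges (in bottleneck distance) to $\mathsf{bcd}_{n}^{\mathcal{R}}(X_1\times\cdots\times X_k, d_{\max})$ as $\delta \to 0$, by Proposition \ref{app:stability theorem}. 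On the right-hand side, each $\mathsf{bcd}_{n_j}^{\mathcal{R}}(F_j^\delta)$ converges to $\mathsf{bcd}_{n_j}^{\mathcal{R}}(X_j, d_{X_j})$ in bottleneck distance, again by Proposition \ref{app:stability theorem}. The remaining task is to show that the ``intersection'' operation on diagrams, $ \big(\mathsf{bcd}_{n_1}(\mathcal{X}_1),\dots,\mathsf{bcd}_{n_k}(\mathcal{X}_k)\big) \mapsto \{I_1^{n_1}\cap\cdots\cap I_k^{n_k} : \dots\}$, is continuous with respect to bottleneck distance; intersection of bars $(a_j, b_j]$ is $(\max_j a_j, \min_j b_j]$ when this is nonempty, and $\max$ and $\min$ are $1$-Lipschitz, so a small perturbation of each diagram produces a small perturbation of the intersected diagram (bars that become empty map to the diagonal). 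Passing to the limit on both sides then yields \eqref{Eq:general kunneth}.

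The main obstacle I anticipate is the last continuity claim, and in particular the bookkeeping of multiplicities and of bars near the diagonal: one must verify that the near-optimal bottleneck matchings of the factor diagrams can be combined into a near-optimal matching of the product diagrams, and that bars whose intersection degenerates to the empty interval (or to a point on the diagonal) do not contribute spurious mass in the limit. Concretely, given $\varepsilon$-matchings $\phi_j : \mathsf{bcd}_{n_j}^{\mathcal{R}}(F_j^\delta) \to \mathsf{bcd}_{n_j}^{\mathcal{R}}(X_j)$, one forms the product matching on $k$-tuples $(I_1^{n_1},\dots,I_k^{n_k}) \mapsto (\phi_1(I_1^{n_1}),\dots,\phi_k(I_k^{n_k}))$ and checks it descends to a matching of the intersected diagrams with cost controlled by $\varepsilon$. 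A clean way to organize this is to note that the assignment respects the diagonal (if some $\phi_j$ sends a bar to the diagonal, the intersected bar is matched to the diagonal too) and then bound $\|(\max_j a_j, \min_j b_j) - (\max_j a_j', \min_j b_j')\|_\infty \le \max_j \|(a_j,b_j)-(a_j',b_j')\|_\infty$. This is elementary but needs to be stated carefully; once it is in place, the squeeze between the two convergent sides finishes the proof.
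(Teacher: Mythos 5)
Your proposal is correct and follows essentially the same route as the paper's proof: approximate each factor $X_j$ by a finite $r$-net, bound $d_{GH}$ of the products by $\max_j d_{GH}(X_j, X_j^r)$, and pass to the limit using Proposition \ref{app:stability theorem} together with the finite-space Corollary \ref{Persistent Kunneth formula}. The only difference is that you explicitly verify the bottleneck-continuity of the bar-intersection operation on the right-hand side (via the $1$-Lipschitz $\max/\min$ bound and the product matching), a step the paper's proof leaves implicit.
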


\begin{proof}
Fix $r > 0$. For every $j\in \left\{1,\cdots,n \right\}$, there are $n_j \in \mathbb{Z}_{\ge 0}$ and $x^{j}_{i_j} \in X_j$ for $1 \le i_j \le n_j$ such that $X_j = \bigcup\limits_{i_j =1}^{n_j} B_r (x^j_{i_j})$. Let $X^r_j = \left\{ x^j_{i_j} \right\}_{1 \le i_j \le n_j}$. Note that $X^r_j$ are finite and $d_{GH}(X_1 \times \cdots \times X_k, X_1^r \times \cdots \times X_k^r) \le \max\limits_{j} d_{GH}(X_j, X_j^r) < r$.  By Proposition \ref{product totally}, we can apply Proposition \ref{app:stability theorem}, i.e.
$$d_B\left(dgm_n^{\Rip}(X_1 \times \cdots \times X_k),dgm_n^{\Rip}(X_1^r \times \cdots \times X_k^r) \right) \le 2 d_{GH}(X_1 \times \cdots \times X_k,X_1^r \times \cdots \times X_k^r) < 2r.$$
Therefore $dgm_n^{\Rip}(X_1^r \times \cdots \times X_k^r)$ converges to $dgm_n^{\Rip}(X_1 \times \cdots \times X_k)$ as $r\rightarrow 0$ with respect to the bottleneck distance. Hence Theorem \ref{thm:persistent-kunneth} can be generalized to totally bounded metric spaces.    
\end{proof}

If we apply Theorem \ref{barcode of circle} and Lemma \ref{totally bounded Persistent Kunneth formula}, we obtain the exact formula of $\Psi_{f}$. Moreover we can show that  each bar in $\mathsf{bcd}_{n}^{\Rip}(\Psi_{f})$ represents the bar of the projected point cloud onto $P_{i_1} \oplus \cdots \oplus P_{i_k}$, where $k=1,\cdots, n$ and $\sum\limits_{\substack{L=1 \\ n_{i_L} \in \mathbb{N}}}^k n_{i_L} = n$.

\begin{lemma}
\label{cosine barcode}
Suppose that the time-series data is $f = \cos Lt$ or $f= \sin Lt$. Then we have the followings

\begin{equation}
\label{cosinebarcodeformula}
\mathsf{bcd}_{n}^{\Rip}\left(\Psi_{f} \right)=
\begin{cases}
\left\{(0, \infty) \right\}, & \mbox{if }n=0 \\
\left\{ \left( 2 \sin\left(\pi{k \over 2k +1}\right), 2 \sin\left(\pi {k+1 \over 2k+3}\right) \right] \right\}, & \mbox{if } n=2k+1, k\in \mathbb{Z}_{\ge 0} \\
\ \emptyset, & \mbox{otherwise} 
\end{cases}. 
\end{equation}

\end{lemma}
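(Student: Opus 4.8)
The plan is to reduce the claim to the known barcode of the unit circle $\mathbb{S}^1$ (Theorem \ref{barcode of circle}) by identifying the Liouville torus $\Psi_f$ of a sinusoidal signal with a scaled circle. First I would observe that when $f = \cos Lt$ or $f = \sin Lt$, only the $L$th Fourier mode is nonzero, so by Definition \ref{def:Liouville torus} the Liouville torus $\Psi_{f,N}$ degenerates: all action variables vanish except $I_L = (r_L^f)^2/2$, and in fact $r_L^f = 2|\hat f(L)| = 1$ for a pure sine or cosine of unit amplitude. Hence $\Psi_f$ is (isometric to) a single circle of radius $r_L^f = 1$, i.e. $\Psi_f \cong 1\cdot\mathbb{S}^1 = \mathbb{S}^1$, using the corollary stating $\Psi_f$ is isometric to $r_1^f\cdot\mathbb{S}^1 \times \cdots \times r_N^f\cdot\mathbb{S}^1$ together with the fact that a factor $0\cdot\mathbb{S}^1$ is a point and contributes only the trivial $(0,\infty)$ bar in degree $0$.

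Next I would invoke the isometry invariance of the Vietoris--Rips complex: since $\mathcal{R}_\epsilon(X)$ depends only on the metric, isometric spaces have identical Vietoris--Rips filtrations and hence identical persistent homology and barcodes. Therefore $\mathsf{bcd}_n^{\mathcal{R}}(\Psi_f) = \mathsf{bcd}_n^{\mathcal{R}}(\mathbb{S}^1)$ for every $n$. Plugging in the formula of Theorem \ref{barcode of circle} gives exactly the claimed right-hand side of \eqref{cosinebarcodeformula}: the single bar $(0,\infty)$ when $n=0$, the bar $\bigl(2\sin(\pi k/(2k+1)),\, 2\sin(\pi(k+1)/(2k+3))\bigr]$ when $n=2k+1$, and $\emptyset$ otherwise. (Alternatively one could feed the one-factor case of Lemma \ref{totally bounded Persistent Kunneth formula} — which is vacuous for $k=1$ — but the direct isometry argument is cleaner.)

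The only genuinely delicate point, and thus the main obstacle, is justifying the degenerate-factor bookkeeping carefully: one must confirm that in Definition \ref{def:Liouville torus} a harmonic oscillator with $I_L = 0$ really contributes a single point (not, say, an ill-defined object), and that taking the product metric space $\{pt\} \times \mathbb{S}^1$ with the maximum metric is isometric to $\mathbb{S}^1$ — which it is, since the singleton factor contributes nothing to pairwise distances. Once this is settled, the normalization $r_L^f = 1$ for $f = \cos Lt$ or $\sin Lt$ follows from $\hat f(L) = 1/2$, and everything collapses to Theorem \ref{barcode of circle}. I would therefore structure the written proof as: (i) reduce $\Psi_f$ to $r_L^f\cdot\mathbb{S}^1$ via the isometry corollary and the degenerate-factor observation; (ii) compute $r_L^f = 1$; (iii) apply isometry-invariance of Vietoris--Rips and Theorem \ref{barcode of circle} to conclude.
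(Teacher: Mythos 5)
Your proposal is correct and follows essentially the same route as the paper: the paper's proof likewise identifies $\Psi_f$ with $r_L^f\cdot\mathbb{S}^1=\mathbb{S}^1$ (since $r_L^f=1$) and then applies Theorem \ref{barcode of circle}. Your extra care about the degenerate factors with $I_j=0$ contributing only a point is a reasonable elaboration of a step the paper leaves implicit, but it does not change the argument.
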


\begin{proof}
Note that $\Psi_{f} = r_L^{f} \cdot \mathbb{S}^{1}$ and $ \mathsf{bcd}_{n}^{\Rip}\left( \textcolor{black}{\Psi_{f}} \right) = \mathsf{bcd}_{n}^{\Rip}\left( r_L^{f} \cdot \mathbb{S}^{1} \right) = \mathsf{bcd}_{n}^{\Rip}\left(\mathbb{S}^{1} \right)$ ($\because$ $r_{L}^{f} = 1$). By Theorem \ref{barcode of circle}, we obtain the formula (\ref{cosinebarcodeformula}).
\end{proof}
Since $\Psi_{f} = \pi_1 \Psi_{f} \times \cdots \times  \pi_N \textcolor{black}{\Psi_{f}}$ and $\mathsf{bcd}_{n}^{\Rip}\left( \pi_L \Psi_{f} \right) = r_L^f \cdot \mathsf{bcd}_{n}^{\Rip}\left(\mathbb{S}^{1} \right)$, according to Lemmas \ref{totally bounded Persistent Kunneth formula} and  \ref{cosine barcode}, we obtain the following theorem. 
\begin{theorem}
\label{cor:barcode}
$\mathsf{bcd}_{n}^{\Rip}(\Psi_{f}) = 	\left\{ J_1^{n_1} \bigcap \cdots \bigcap J_N^{n_N} : J_L^{n_{L}} \in \mathsf{bcd}_{n_L}^{\Rip}	\left(\pi_{L} \Psi_{f} \right) \ \text{and} \ \sum\limits_{L=1}^{N} n_L = n \right\}$, i.e.
$$
J_{L}^{n} = \begin{cases} (0,\infty), & \mbox{if }n=0 \\ \left(2r_L^f \sin\left(\pi {k \over 2k+1}\right), 2r_L^f \sin\left(\pi {k+1 \over 2k+3}\right)\right], & \mbox{if }n=2k+1 \\ \ \emptyset, & \mbox{otherwise}
\end{cases}.
$$
\end{theorem}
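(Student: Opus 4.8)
The plan is to exhibit $\Psi_f$ as a finite product of scaled round circles and then invoke the general persistent K\"unneth formula just established in Lemma \ref{totally bounded Persistent Kunneth formula}. The key observation, already recorded in the corollary following Theorem \ref{thm:sliding comparison}, is that $\Psi_f$ is isometric to $r_1^f\cdot\mathbb{S}^1\times\cdots\times r_N^f\cdot\mathbb{S}^1$; concretely, writing points of $P_1+\cdots+P_N$ in the orthonormal frame $\{\tilde x_L,\tilde y_L\}_{L=1}^N$ of Theorem \ref{N-torus}, the $L$-th factor $\pi_L\Psi_f$ is the Euclidean circle of radius $r_L^f$ in the plane $P_L$. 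As explained in Section \ref{sec:choice of metric}, I would equip $P_1+\cdots+P_N$ with the maximum metric relative to this orthogonal splitting, so that $(\Psi_f,d_{\max})$ becomes literally the product metric space $\pi_1\Psi_f\times\cdots\times\pi_N\Psi_f$ with the maximum metric --- the hypothesis under which Proposition \ref{product rips} and Lemma \ref{totally bounded Persistent Kunneth formula} operate.

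Next I would check the hypotheses of Lemma \ref{totally bounded Persistent Kunneth formula}: each factor $\pi_L\Psi_f$ is compact, hence totally bounded, so the lemma applies verbatim and yields
\[
\mathsf{bcd}_n^{\mathcal{R}}(\Psi_f)=\left\{J_1^{n_1}\cap\cdots\cap J_N^{n_N}:J_L^{n_L}\in\mathsf{bcd}_{n_L}^{\mathcal{R}}(\pi_L\Psi_f)\ \text{and}\ \sum_{L=1}^N n_L=n\right\}.
\]
It then remains to compute $\mathsf{bcd}_{n_L}^{\mathcal{R}}(\pi_L\Psi_f)=\mathsf{bcd}_{n_L}^{\mathcal{R}}(r_L^f\cdot\mathbb{S}^1)$. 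For this I would use the elementary fact that rescaling a metric by $\lambda>0$ replaces $\mathcal{R}_\epsilon$ by $\mathcal{R}_{\epsilon/\lambda}$, hence reparametrizes the persistence module by $\epsilon\mapsto\lambda\epsilon$ and sends each bar $(a,b]$ to $(\lambda a,\lambda b]$; thus $\mathsf{bcd}_{n}^{\mathcal{R}}(r_L^f\cdot\mathbb{S}^1)=r_L^f\cdot\mathsf{bcd}_{n}^{\mathcal{R}}(\mathbb{S}^1)$. Substituting the formula of Theorem \ref{barcode of circle} (equivalently Lemma \ref{cosine barcode} rescaled) gives exactly $J_L^n=(0,\infty)$ for $n=0$, $J_L^n=\left(2r_L^f\sin(\pi\tfrac{k}{2k+1}),\,2r_L^f\sin(\pi\tfrac{k+1}{2k+3})\right]$ for $n=2k+1$, and $J_L^n=\emptyset$ otherwise, which combined with the displayed formula is the assertion.

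I expect the only real subtlety to be in the first step: making sure that under the max metric of Section \ref{sec:choice of metric} the torus $\Psi_f$ is genuinely the categorical product of the circles $\pi_L\Psi_f$ in $\mathbf{Met}$ --- this is what licenses Proposition \ref{product rips} and therefore the K\"unneth formula --- and that one is allowed to pass from finite metric spaces (as in Corollary \ref{Persistent Kunneth formula}) to these compact ones, which is precisely what Lemma \ref{totally bounded Persistent Kunneth formula} supplies. The degenerate possibility that $r_L^f=0$ for some mode (a vanishing Fourier coefficient, so that $\pi_L\Psi_f$ is a single point with barcode $\{(0,\infty)\}$ in degree $0$ only) should be noted but is handled uniformly, since scaling by $0$ collapses every positive-length bar and the formula for $J_L^n$ returns $(0,\infty)$ in degree $0$ and $\emptyset$ otherwise. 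Everything else --- the scaling identity for barcodes and the bookkeeping of the intersections --- is routine.
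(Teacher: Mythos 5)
Your proposal is correct and follows essentially the same route as the paper: decompose $\Psi_f$ as the product $\pi_1\Psi_f\times\cdots\times\pi_N\Psi_f$ of scaled circles under the maximum metric, apply the totally-bounded persistent K\"unneth formula (Lemma \ref{totally bounded Persistent Kunneth formula}), and substitute $\mathsf{bcd}_{*}^{\mathcal{R}}(\pi_L\Psi_f)=r_L^f\cdot\mathsf{bcd}_{*}^{\mathcal{R}}(\mathbb{S}^1)$ from Theorem \ref{barcode of circle}. Your extra remark on the degenerate case $r_L^f=0$ is a sensible addition the paper omits, but it does not change the argument.
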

Clearly, $\mathsf{bcd}_{0}^{\Rip}(\Psi_{f}) = \left\{ (0, \infty) \right\}, \mathsf{bcd}_{1}^{\Rip}(\Psi_{f}) = \left\{ I_{1}, \cdots, I_{N} : I_{L} = \left(0, r_{L}^f \sqrt{3}  \right]  \right\}$. Notice that Lemma \ref{cosine barcode} indicates that the $2$-dimensional barcode of a circle is an empty set. Suppose that we set $n=2$. In  (\ref{Eq:general kunneth}), $\sum\limits_{L=1}^{N} n_L = 2$ implies $n_{i_1} = n_{i_2} = 1$ for $1 \le i_1 < i_2 \le N$ and $n_{L} = 0$ for $L \ne i_1,i_2$. Therefore 
\begin{eqnarray}
\mathsf{bcd}_{2}^{\Rip}(\Psi_{f}) &=& \Bigl\{ I_{i_1} \bigcap I_{i_2} :   I_{L} = \left(0, r_{L}^f \sqrt{3}  \right] \ \text{and} \ 1 \le i_1 < i_2 \le N \Bigr\} 
\nonumber \\
&=& \left\{ \left(0, \min	\left(r_{L_{i_1}}^f, r_{L_{i_2}}^f \right) \sqrt{3} \right] : \ \text{and} \ 1 \le i_1 < i_2 \le N \right\}. \nonumber 
\end{eqnarray}
But for $3$ or higher dimensions it is possible to have $n_{i_{j}} > 1$. Thus,  for $n \ge 3$, those $n$-dimensional barcodes have various type of bars.

Now we provide the interpretation of the derived exact barcode.

\begin{theorem}
\label{barcode meaning}
Each bar in $\mathsf{bcd}_{n}^{\Rip}(\Psi_{f})$ represents the bar of the projected point cloud onto $P_{i_1} \oplus \cdots \oplus P_{i_k}$, where $k=1,\cdots,n$ and $\sum\limits_{\substack{L=1 \\ n_{i_L} \in \mathbb{N}}}^k n_{i_L} = n$. That is, $\mathsf{bcd}_{n}^{\Rip}(\Psi_{f}) = \bigcup\limits_{1\le i_1 < \cdots <i_k \le N} \bigcup\limits_{1\le k \le n} \mathsf{bcd}_{n}^{\Rip}	\left( \pi_{i_1 \cdots i_k} \Psi_{f} \right)$.
\end{theorem}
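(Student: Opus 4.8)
The plan is to derive this purely combinatorially from the exact formula of Theorem~\ref{cor:barcode}, using only the observation that $J_L^0=(0,\infty)$ is the identity for the intersection operation on bars, together with one more application of the general Künneth formula (Lemma~\ref{totally bounded Persistent Kunneth formula}) to the projected tori. I will assume $n\ge 1$ throughout; for $n=0$ the right-hand union is empty while $\mathsf{bcd}_0^{\mathcal{R}}(\Psi_f)=\{(0,\infty)\}$, and this degenerate case is noted separately.

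First I would record the structural fact that every bar $b\in\mathsf{bcd}_n^{\mathcal{R}}(\Psi_f)$ is, by Theorem~\ref{cor:barcode}, of the form $b=J_1^{n_1}\cap\cdots\cap J_N^{n_N}$ with $\sum_{L=1}^N n_L=n$, and that since $J_L^0=(0,\infty)$ the factors with $n_L=0$ do not affect the intersection. Hence, writing $S:=\{L:n_L\ge 1\}=\{i_1<\cdots<i_k\}$ for the support of $b$, one has $b=J_{i_1}^{n_{i_1}}\cap\cdots\cap J_{i_k}^{n_{i_k}}$, and because each $n_{i_j}\ge 1$ while $\sum_j n_{i_j}=n$ we automatically get $1\le k\le n$. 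This is the only place any real thought is needed on the left-hand side.

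Next I would identify the projected point cloud metrically. By Theorem~\ref{thm:sliding comparison} and the isometry recorded thereafter, $\Psi_f$ is isometric to $r_1^f\cdot\mathbb{S}^1\times\cdots\times r_N^f\cdot\mathbb{S}^1$ with the $L$-th circle lying in the plane $P_L$; the planes $P_L$ are mutually orthogonal and $d_{max}$ on $P_1+\cdots+P_N$ is the maximum of the Euclidean distances in the blocks $P_L$, so the orthogonal projection $\pi_{i_1\cdots i_k}$ restricts on $\Psi_f$ to the coordinate projection onto the sub-torus, giving $\pi_{i_1\cdots i_k}\Psi_f=r_{i_1}^f\cdot\mathbb{S}^1\times\cdots\times r_{i_k}^f\cdot\mathbb{S}^1$ with the max metric again the maximum over the $k$ retained blocks. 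Applying Lemma~\ref{totally bounded Persistent Kunneth formula} to this product of $k$ circles, and Theorem~\ref{barcode of circle} rescaled by $r_{i_j}^f$ to each factor, yields $\mathsf{bcd}_n^{\mathcal{R}}(\pi_{i_1\cdots i_k}\Psi_f)=\{J_{i_1}^{m_1}\cap\cdots\cap J_{i_k}^{m_k}:\sum_{j=1}^k m_j=n\}$ with the $J_{i_j}^{m_j}$ exactly as in Theorem~\ref{cor:barcode}.

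The two inclusions are then immediate. For $\subseteq$, the bar $b$ with support $S=\{i_1<\cdots<i_k\}$ equals $J_{i_1}^{n_{i_1}}\cap\cdots\cap J_{i_k}^{n_{i_k}}$ with $\sum_j n_{i_j}=n$, hence lies in $\mathsf{bcd}_n^{\mathcal{R}}(\pi_{i_1\cdots i_k}\Psi_f)$, one of the terms of the right-hand union since $1\le k\le n$. For $\supseteq$, any bar $J_{i_1}^{m_1}\cap\cdots\cap J_{i_k}^{m_k}$ of $\mathsf{bcd}_n^{\mathcal{R}}(\pi_{i_1\cdots i_k}\Psi_f)$ is turned into a bar of $\mathsf{bcd}_n^{\mathcal{R}}(\Psi_f)$ by extending the exponent vector with zeros on the complementary planes, using $J_L^0=(0,\infty)$ once more, and Theorem~\ref{cor:barcode} certifies membership. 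The one point deserving care — and the closest thing to an obstacle — is the difference between a set equality and a multiset equality: the right-hand union over-counts, since a bar whose support is a proper subset of $\{i_1,\ldots,i_k\}$ reappears (zero-padded) in $\mathsf{bcd}_n^{\mathcal{R}}(\pi_{i_1\cdots i_k}\Psi_f)$ for every superset of its support. I would therefore state the identity at the level of sets of bars (equivalently, attributing each bar to its support), and remark that the finer multiplicity count is precisely the one already furnished by Theorem~\ref{cor:barcode}.
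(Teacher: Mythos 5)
Your proof is correct and follows essentially the same route as the paper's: both reduce each bar of Theorem~\ref{cor:barcode} to its support using $J_L^{0}=(0,\infty)$ as the identity for intersection, and then recognize the resulting families, via the K\"unneth formula applied to the projected sub-torus $\pi_{i_1\cdots i_k}\Psi_f = \pi_{i_1}(\pi_{i_1\cdots i_k}\Psi_f)\times\cdots\times\pi_{i_k}(\pi_{i_1\cdots i_k}\Psi_f)$, as $\mathsf{bcd}_{n}^{\mathcal{R}}(\pi_{i_1\cdots i_k}\Psi_f)$. Your closing caveat about set versus multiset equality is well taken --- for instance, when $n\ge 3$ a bar such as $J_{i_1}^{3}$ recurs zero-padded in $\mathsf{bcd}_{3}^{\mathcal{R}}(\pi_{i_1 i_2}\Psi_f)$ for every $i_2\ne i_1$, so the right-hand union over-counts as a multiset --- and this is a point the paper's proof passes over silently.
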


\begin{proof}
$\sum\limits_{L=1}^{N} n_L = n$ from Theorem \ref{cor:barcode} and suppose that $n_{i_1}, \cdots, n_{i_k} >0$ and $n_j =0$ for $j \in \left\{1, \cdots, N \right\} \setminus \left\{ i_1, \cdots, i_k \right\}.$ 
Since $J^{n_j}_{j} = J^{0}_{j} = [0,\infty)$,
we have
$J_1^{n_1} \bigcap \cdots \bigcap J_N^{n_N} =J_{i_1}^{n_{i_1}} \bigcap \cdots \bigcap J_{i_k}^{n_{i_k}}.$ Therefore, we have the following 
\begin{flalign*}  
\mathsf{bcd}_{n}^{\Rip}(\textcolor{black}{\Psi_{f}}) &= \left\{ J_1^{n_1} \bigcap \cdots \bigcap J_N^{n_N} : J_L^{n_{L}} \in \mathsf{bcd}_{n_L}^{\Rip}	\left(\pi_{L}\textcolor{black}{\Psi_{f}}\right) \ \text{and} \ \sum\limits_{L=1}^{N} n_L = n \right\} \\
  &= \bigcup\limits_{1\le i_1 < \cdots <i_k \le N} \bigcup\limits_{1\le k \le n}\left\{ J_{i_1}^{n_{i_1}} \bigcap \cdots \bigcap J_{i_k}^{n_{i_k}} : J_{i_L}^{n_{i_L}} \in \mathsf{bcd}_{n_{i_L}}^{\Rip}	\left( \pi_{i_L} \Psi_{f} \right) \ \text{and} \sum\limits_{L=1}^{k} n_{i_L} = n \right\} \\&= \bigcup\limits_{1\le i_1 < \cdots <i_k \le N} \bigcup\limits_{1\le k \le n}\left\{ J_{i_1}^{n_{i_1}} \bigcap \cdots \bigcap J_{i_k}^{n_{i_k}} : J_{i_L}^{n_{i_L}} \in \mathsf{bcd}_{n_{i_L}}^{\Rip}	\left( \pi_{i_L} (\pi_{i_1 \cdots i_k} \Psi_{f}) \right) \ \text{and} \sum\limits_{L=1}^{k} n_{i_L} = n \right\}  \\&=\bigcup\limits_{1\le i_1 < \cdots <i_k \le N} \bigcup\limits_{1\le k \le n} \mathsf{bcd}_{n}^{\Rip}	\left( \pi_{i_1 \cdots i_k} \Psi_{f} \right).&&
\end{flalign*} 
{\color{black}The last equality utilizes the fact that $\pi_{i_1}(\pi_{i_1 \cdots i_k} \Psi_f) \times \cdots \times \pi_{i_k}(\pi_{i_1 \cdots i_k} \Psi_f) = \pi_{i_1 \cdots i_k} \Psi_f$ along with Theorem \ref{cor:barcode}.}
\end{proof}

\begin{example}
\label{onedimension persistent}
$\mathsf{bcd}_{1}^{\Rip}(\Psi_{f}) = \left\{ I_{1}, \cdots, I_{N} : I_{L} = \left(0, r_{L}^f \sqrt{3}  \right]  \right\} = \bigcup\limits_{1\le L \le N} \mathsf{bcd}_{1}^{\Rip}(\pi_L \textcolor{black}{\Psi_{f}}) $. Each bar $I_{L}$ in the barcode represents the barcode of the projected point cloud onto $L$-plane.
\end{example}

\begin{figure}[h]
    \centering
    \includegraphics{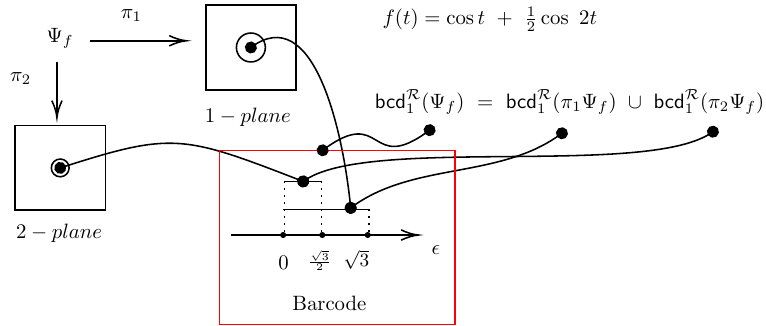}
    \caption{Schematic illustration of the 1-dimensional barcode}
    \label{Barcode interpretation}
\end{figure}

Figure \ref{Barcode interpretation} shows the corresponding barcode to the time-series data of  $f(t) = \cos t + {1 \over 2} \cos 2t$. In this case, the barcode of $\Psi_{f}$ is given by the two bars from the projections to 1-plane and 2-plane, corresponding to the Fourier modes $1$ and $2$, respectively.  That is, 
\begin{eqnarray}
\mathsf{bcd}_{1}^{\Rip}(\Psi_{f}) &=& \mathsf{bcd}_{1}^{\Rip}(\pi_{1}\Psi_{f}) \bigcup \mathsf{bcd}_{1}^{\Rip}(\pi_{2}\Psi_{f})\nonumber \\
&=& \left\{ \left(0, \sqrt{3}\right], 	\left(0,{\sqrt{3} \over 2}\right] \right\}.
\nonumber 
\end{eqnarray}
The figure shows how the actual barcode is decomposed into two bars, each from the projection. The circles in the black square boxes indicate that the bar in the projected space is not empty. Notice that the radii of the circles in the black square boxes are different. The radius of each circle is proportional to the size of the corresponding Fourier coefficient. That is, the radius of
{\color{black}$\pi_{1}\Psi_{f}$ is twice the radius of $\pi_{2}\Psi_{f}$}. This is also reflected in the barcode, $\mathsf{bcd}_{1}^{\Rip}(\Psi_{f})$ in the red square box. 

\begin{example}
\label{twodimension persistent}
We consider the two-dimensional barcode. That is, 
\begin{eqnarray} 
\mathsf{bcd}_{2}^{\Rip}(\Psi_{f}) &=& {\textstyle \bigcup\limits_{1\le i_1 \le N}} \mathsf{bcd}_{2}^{\Rip}(\pi_{i_1}\Psi_{f}) \ \bigcup \  {\textstyle \bigcup\limits_{1\le i_1 <i_2 \le N}} \mathsf{bcd}_{2}^{\Rip}(\pi_{i_1 i_2}\Psi_{f})
\nonumber \\
&=& 
\emptyset \quad \bigcup \  {\textstyle \bigcup\limits_{1\le i_1 <i_2 \le N}} \mathsf{bcd}_{2}^{\Rip}(\pi_{i_1 i_2}\Psi_{f})
\nonumber \\
&=& 
\left\{ I_{i_1} \bigcap I_{i_2} :   I_{L} = \left(0, r_{L}^f \sqrt{3}  \right] \ \text{and} \ 1 \le i_1 < i_2 \le N \right\}
\nonumber \\
&=& \Bigl\{ \left(0, \min	\left(r_{L_{i_1}}^f, r_{L_{i_2}}^f \right) \sqrt{3}  \right]: 1 \le i_1 < i_2 \le N \Bigr\} \nonumber .
\end{eqnarray}
For this example, each bar 
$I_{i_1} \cap I_{i_2}$ in $\mathsf{bcd}_{2}^{\Rip}(\Psi_{f})$ is corresponding to the projected point cloud onto $P_{i_1} \oplus P_{i_2}$. When we consider $\mathsf{bcd}_{2}^{\Rip}(\Psi_{f})$, we can neglect the Fourier coefficients of large amplitude. For example, let $f(t) = \cos t + \cos 2t$ and $g(t) = \cos t + 10 \cos 2t$. These two time-series data are apparently different. But we can regard these two time-series to be same with respect to 2-dimensional persistent homology.
\end{example}

The following proposition allows us to use the permutation symmetric property for time-series data. Note that the barcode is a combinatorial object and it is permutation symmetric (e.g., two barcodes, $\left\{ (0,1], (3,5] \right\}$ and $\left\{ (3,5], (0,1] \right\}$ are identical).
\begin{proposition}
\label{prop:permutation symmetric}
Let $\sigma : \left\{ 1, \cdots, N \right\} \rightarrow  \left\{ 1, \cdots, N \right\}$ be a bijection. Then if a time-series data $g$ satisfies $\hat{g}(n) = \hat{f}(\sigma(n))$, 
 we have $\mathsf{bcd}_{n}^{\Rip}(\Psi_{f}) = \mathsf{bcd}_{n}^{\Rip}(\Psi_{g})$.
\end{proposition}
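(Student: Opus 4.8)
The plan is to trace through the barcode formula of Theorem~\ref{cor:barcode} and observe that it depends on $f$ only through the multiset of circle radii $\{r_1^f, \dots, r_N^f\}$, not on the ordering of these radii. Recall that $r_L^f = 2|\hat f(L)|$, so the hypothesis $\hat g(n) = \hat f(\sigma(n))$ gives $r_n^g = 2|\hat g(n)| = 2|\hat f(\sigma(n))| = r_{\sigma(n)}^f$ for every $n$; in particular $\{r_1^g, \dots, r_N^g\} = \{r_1^f, \dots, r_N^f\}$ as multisets. The first step is therefore to write out, for $g$, the formula
\[
\mathsf{bcd}_{n}^{\mathcal{R}}(\Psi_{g}) = \left\{ J_1^{m_1,g} \cap \cdots \cap J_N^{m_N,g} : J_L^{m_L,g} \in \mathsf{bcd}_{m_L}^{\mathcal{R}}(\pi_L \Psi_g),\ \sum_{L=1}^N m_L = n \right\},
\]
where each $J_L^{m,g}$ is the interval from Theorem~\ref{cor:barcode} with $r_L^f$ replaced by $r_L^g = r_{\sigma(L)}^f$.

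The second step is to exhibit the bijection between the index-tuples defining $\mathsf{bcd}_n^{\mathcal{R}}(\Psi_g)$ and those defining $\mathsf{bcd}_n^{\mathcal{R}}(\Psi_f)$. Given a tuple $(m_1, \dots, m_N)$ with $\sum m_L = n$ appearing in the $g$-formula, set $n_L := m_{\sigma^{-1}(L)}$; then $\sum_L n_L = n$ as well, and $(n_1, \dots, n_N)$ is a valid tuple in the $f$-formula. Since $J_L^{m,g}$ depends only on $m$ and the single number $r_L^g = r_{\sigma(L)}^f$, we get $J_L^{m_L,g} = J_{\sigma(L)}^{m_L, f} = J_{\sigma(L)}^{n_{\sigma(L)}, f}$. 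Taking the intersection over $L$ and reindexing by $L' = \sigma(L)$,
\[
\bigcap_{L=1}^N J_L^{m_L,g} = \bigcap_{L=1}^N J_{\sigma(L)}^{n_{\sigma(L)},f} = \bigcap_{L'=1}^N J_{L'}^{n_{L'},f},
\]
so every bar of $\mathsf{bcd}_n^{\mathcal{R}}(\Psi_g)$ equals a bar of $\mathsf{bcd}_n^{\mathcal{R}}(\Psi_f)$ arising from $(n_1,\dots,n_N)$, and this correspondence $(m_L) \leftrightarrow (n_L)$ is a bijection on tuples (its inverse uses $\sigma$ in place of $\sigma^{-1}$). Hence the two multisets of bars coincide, which is exactly $\mathsf{bcd}_n^{\mathcal{R}}(\Psi_f) = \mathsf{bcd}_n^{\mathcal{R}}(\Psi_g)$.

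There is no real obstacle here; the only point that needs care is the bookkeeping of multiplicities, since a barcode is a \emph{multiset} and one must check that the index-tuple bijection preserves multiplicities rather than merely the underlying set of bars — but that is immediate from the fact that $(m_L) \mapsto (n_L)$ is a genuine bijection between the two indexing sets, so it carries the multiset $\{\bigcap_L J_L^{m_L,g}\}$ onto the multiset $\{\bigcap_{L'} J_{L'}^{n_{L'},f}\}$ with multiplicities intact. One could also phrase the whole argument more slickly via the isometry $\Psi_g \cong r_1^g \cdot \mathbb{S}^1 \times \cdots \times r_N^g \cdot \mathbb{S}^1 = r_{\sigma(1)}^f \cdot \mathbb{S}^1 \times \cdots \times r_{\sigma(N)}^f \cdot \mathbb{S}^1$, which is isometric to $\Psi_f$ by permuting the product factors, and then invoke that isometric metric spaces have identical Vietoris--Rips barcodes; I would include this one-line remark as an alternative proof.
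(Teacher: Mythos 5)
Your proof is correct and follows essentially the same route as the paper's: both arguments reduce to the observation that $r_L^g = r_{\sigma(L)}^f$ and then reindex the intersection formula of Theorem~\ref{cor:barcode} by $\sigma$ (the paper phrases the first step as an isometry of the projected circles plus stability, while you read it off the explicit interval formula, and your bookkeeping of the tuple bijection $(m_L)\mapsto(m_{\sigma^{-1}(L)})$ is if anything more careful than the paper's). Your closing remark --- that $\Psi_g$ and $\Psi_f$ are globally isometric as products of circles with permuted radii, so their Vietoris--Rips barcodes coincide outright --- is a genuinely shorter alternative worth keeping.
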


\begin{proof}
First we show that $\mathsf{bcd}_{n}^{\Rip}(\pi_{\sigma (L)} \textcolor{black}{\Psi_{f}}) = \mathsf{bcd}_{n}^{\Rip}(\pi_{L} \textcolor{black}{\Psi_{g}})$. Note that $\hat{g}(n) = \hat{f}(\sigma(n))$ implies $r_L^g = r_{\sigma (L)}^f$. Equation (\ref{eq:N-torus}) yields 
\begin{flalign*}  
\pi_{\sigma (L)} \textcolor{black}{\Psi_{f}} &= r_{\sigma (L)}^f 	\left( \cos (\sigma (L) t) \tilde{x}_{\sigma (L)} + \sin (\sigma (L) t) \tilde{y}_{\sigma (L)}\right) \\
&= r_{L}^g 	\left( \cos (\sigma (L) t) \tilde{x}_{\sigma (L)} + \sin (\sigma (L) t) \tilde{y}_{\sigma (L)}\right)  \ 
\end{flalign*} 
and 
\begin{flalign*} 
\pi_{L} \Psi_{g}(\mathbb{T}) &= r_L^g 	\left( \cos (Lt) \tilde{x}_{L} + \sin (Lt) \tilde{y}_{L}\right) . 
\end{flalign*} 
Therefore $\pi_{\sigma (L)} \Psi_{f}$ and $\pi_{L} \Psi_{g}$ are isometric. By  Proposition \ref{app:stability theorem}, we have $\mathsf{bcd}_{n}^{\Rip}(\pi_{\sigma (L)} \Psi_{f})=  \mathsf{bcd}_{n}^{\Rip}(\pi_{L}\Psi_{g})$. Finally, 
from Theorem \ref{cor:barcode}, 
\begin{flalign*}  
\mathsf{bcd}_{n}^{\Rip}(\Psi_{f}) &= 	\left\{ J_1^{n_1} \bigcap \cdots \bigcap J_N^{n_N} : J_L^{n_{L}} \in \mathsf{bcd}_{n_L}^{\Rip}	\left(\pi_{L} \Psi_{f})\right) \ \text{and} \ \sum\limits_{L=1}^{N} n_L = n \right\} \\
&= \left\{ J_1^{n_1} \bigcap \cdots \bigcap J_N^{n_N} : J_{\sigma (L)}^{n_{L}} \in \mathsf{bcd}_{n_L}^{\Rip}	\left(\pi_{\sigma (L)} \Psi_{f})\right) \ \text{and} \ \sum\limits_{L=1}^{N} n_L = n \right\} \\
&= \left\{ \tilde{J}_1^{n_1} \bigcap \cdots \bigcap \tilde{J}_N^{n_N} : \tilde{J}_{L}^{n_{L}} \in \mathsf{bcd}_{n_L}^{\Rip}	\left(\pi_{L} \Psi_{g})\right) \ \text{and} \ \sum\limits_{L=1}^{N} n_L = n \right\} \\
&= \mathsf{bcd}_{n}^{\Rip}(\Psi_{g})
\end{flalign*} 
where 
we set $\tilde{J}_{L}^{n_L} = J_{\sigma (L)}^{n_L}$. 
\end{proof}
\end{section}

\begin{section}{Application of multi-parameter theory and its interpretation}
\label{Sec : Application of multi-parameter theory and its interpretation}
As mentioned earlier, one-parameter persistent homology theory may not be sufficient to capture the important characteristics of the given data. For example, as shown in Lemma~\ref{cosine barcode}, $\cos L_{1}t$ and $\cos L_{2}t$ yield the same barcode. But they are physically different and one may need to distinguish them when it needed. 
For this reason, based on the results from the previous section, we propose a multi-parameter persistent homology method based
on the filtration with the Fourier bases.

Due to theoretical shortage of complete invariant in multi-parameter persistence theory, we consider an incomplete invariant, that is, the rank invariant (cf. Definition \ref{rank invariant}). 
In this section, we consider one-dimensional reduction of multi-parameter persistent homology and derive the exact barcode formula. We provide the detailed analysis of the proposed method. 
\subsection{Construction of  multi-parameter persistent homology}
Note that persistent homology is the method that matches simplicial complex to each point in the filtration space and record the changes of homology of simplicial complex with filtration. We will construct multi-parameter persistent homology in the similar way with each of the Fourier bases being the filtration coordinate.

\begin{construction}
\label{Def:multi-parameter}
For each parameter $\boldsymbol{\epsilon} = (\epsilon_1,\cdots,\epsilon_N) \in \mathbb{R}^N$, 
consider the product of Vietoris-Rips complexes
\[
\Rip_{\boldsymbol{\epsilon}}(\Psi_f)
:= \Rip_{\epsilon_1}(\pi_1 \Psi_f) \times \cdots \times \Rip_{\epsilon_N}(\pi_N \Psi_f).
\]
Then the pair
\(
\Bigl(
\{ H_{n}(\Rip_{\boldsymbol{\epsilon}}(\Psi_f)) \}_{\boldsymbol{\epsilon} \in \mathbb{R}^N},\;
\{ \iota_{*}^{\boldsymbol{\epsilon}, \boldsymbol{\epsilon}'} : H_{n}(\Rip_{\boldsymbol{\epsilon}}(\Psi_f)) 
\to H_{n}(\Rip_{\boldsymbol{\epsilon}'}(\Psi_f)) \}_{\boldsymbol{\epsilon} \le \boldsymbol{\epsilon}'}
\Bigr)
\)
is the $n$-dimensional multi-parameter persistent homology of $\Psi_f$,
where $\iota^{\boldsymbol{\epsilon},\boldsymbol{\epsilon}'}$ is the inclusion of complexes 
and $\iota_{*}^{\boldsymbol{\epsilon},\boldsymbol{\epsilon}'}$ is the induced map on homology.
\end{construction}

As we construct multi-parameter persistent homology, there are infinitely many one-dimensional reductions where persistent homology is calculated in the filtration space. One easy choice would be a line, which is defined with the direction vector with the origin (endpoint) vector. The following definition provides the definitions of the one-parameter reduction of multi-parameter persistent homology on a ray.

\begin{definition}
\label{one parameter reduction of multi-parameter persistent homology}
Let $\mathbf{a} = (a_{1},\cdots , a_{N})$ be the direction vector of a ray with each component $a_i > 0$, 
and let $\mathbf{b} = (b_1, \cdots , b_N) \in \mathbb{R}^{N}$ be the endpoint of the ray. 
For the ray $\ell(t) = \mathbf{b} + \sqrt{N}\, t \cdot \tfrac{\mathbf{a}}{\lVert \mathbf{a} \rVert}$ with $t \ge 0$ in the multi-parameter space,
define a filtration
\[
\Rip^{\ell}_{t}(\Psi_{f}) 
= \Rip_{\,b_1 + \sqrt{N} t \cdot \tfrac{a_1}{\lVert \mathbf{a} \rVert}}(\pi_1\Psi_{f})
\times \cdots \times
\Rip_{\,b_N + \sqrt{N} t \cdot \tfrac{a_N}{\lVert \mathbf{a} \rVert}}(\pi_N\Psi_{f}).
\]
This construction yields a one-parameter filtration with respect to $t$, for which the barcode is well defined. Denote the $n$-dimensional barcode of this filtration by $\mathsf{bcd}_{n}^{\Rip,\ell}(\Psi_{f})$.
\end{definition}

\subsection{Exact formula and interpretation of exact multi-parameter persistent homology}
\label{sec:EMPH on a ray}
In the Introduction, we explained the difficulty of dealing with multi-parameter persistent homology. Alternatively, we deal with fibered barcode, which is one-dimensional reduction of multi-parameter persistent homology. One-dimensional reduction involves the assignment of a ray characterized by both a direction vector and an endpoint. In this subsection, we derive the exact barcode formula on a ray and provide its interpretation. As shown below, the derived barcode implies that the choice of the direction vector can change the ratio of the considered modes, i.e. the weight of each mode resulting in the change of the barcode of the Liouville torus projected onto the $L$-plane, which consequently changes the overall barcode of the Liouville torus. 
Additionally, the endpoint vector can be used to assign threshold values of the Fourier modes.

\begin{proposition}[Stability theorem]
\label{stability theorem}
Let $(X_1, d_1^X), \cdots ,(X_k, d_k^X), (Y_1, d_1^Y), \cdots ,(Y_k, d_k^Y)$ be totally bounded metric spaces, $X=(X_1 \times \cdots \times X_k, d_{\max}^X)$ and $Y=(Y_1 \times \cdots \times Y_k, d_{\max}^Y)$ equipped with the maximum distance. For $\ell(t) = \mathbf{b} + \sqrt{k} t \cdot {\mathbf{a} \over \lVert \mathbf{a} \rVert}$, the following inequality holds. 
$$d_B\left(\mathsf{bcd}^{\Rip,\ell}_{n}(X),\mathsf{bcd}^{\Rip,\ell}_{n}(Y) \right) \le {2\lVert \mathbf{a} \rVert \over \sqrt{k}\min\limits_{L}a_L} \max\limits_{L}d_{GH}(X_L,Y_L)$$
\end{proposition}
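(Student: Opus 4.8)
The plan is to reduce the ray-barcode stability bound to the ordinary Gromov–Hausdorff stability bound of Proposition \ref{app:stability theorem} by two observations: first, that restricting a product multi-filtration to a ray only \emph{reparametrizes} each coordinate's single-parameter filtration affinely; second, that this reparametrization distorts bottleneck distances by a controllable multiplicative factor. Concretely, along the ray $\ell(t) = \mathbf{b} + \sqrt{k}\, t\, \mathbf{a}/\lVert\mathbf{a}\rVert$ the $L$-th coordinate scale is $b_L + \sqrt{k}\,t\,a_L/\lVert\mathbf{a}\rVert$, so by Definition \ref{one parameter reduction of multi-parameter persistent homology} and Proposition \ref{product rips} we have $\mathcal{R}^{\ell}_{t}(X) = \prod_{L} \mathcal{R}_{b_L + \sqrt{k}\,t\,a_L/\lVert\mathbf{a}\rVert}(X_L)$. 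Applying the general persistent Künneth formula (Lemma \ref{totally bounded Persistent Kunneth formula}), the ray-barcode of $X$ is obtained from the single-parameter barcodes $\mathsf{bcd}^{\mathcal{R}}_{n_L}(X_L)$ by first pulling each bar $(p,q]$ back through the affine map $s \mapsto (s - b_L)\lVert\mathbf{a}\rVert/(\sqrt{k}\,a_L)$ — giving a bar in the $t$-variable — and then intersecting across $L$ with $\sum n_L = n$.

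The key steps, in order, are: (i) record the affine reparametrization $\phi_L(s) = (s - b_L)\lVert\mathbf{a}\rVert/(\sqrt{k}\,a_L)$ for each $L$, noting it is orientation-preserving since $a_L > 0$, with slope $\lVert\mathbf{a}\rVert/(\sqrt{k}\,a_L) \le \lVert\mathbf{a}\rVert/(\sqrt{k}\min_L a_L)$; (ii) show that if $J$ and $J'$ are single-parameter barcodes of $X_L$ and $Y_L$ with $d_B(J,J') \le \delta_L$ realized by a matching $\psi$, then $\phi_L \circ \psi \circ \phi_L^{-1}$ matches $\phi_L(J)$ to $\phi_L(J')$ with cost at most $(\text{slope of }\phi_L)\cdot\delta_L$, because $\phi_L$ is affine with that slope and moving both endpoints of an interval; (iii) observe that the Künneth intersection operation $\{J_1^{n_1}\cap\cdots\cap J_k^{n_k}\}$ is $1$-Lipschitz for the bottleneck distance — a matching on each factor induces a matching on the intersected barcode whose cost is the max of the factor costs (this is essentially the stability already implicit in how Lemma \ref{totally bounded Persistent Kunneth formula} is used); (iv) combine: set $\delta_L = 2 d_{GH}(X_L, Y_L)$ from Proposition \ref{app:stability theorem}, so the ray-barcode matching cost is at most $\max_L (\text{slope}_L \cdot 2 d_{GH}(X_L,Y_L)) \le \frac{2\lVert\mathbf{a}\rVert}{\sqrt{k}\min_L a_L}\max_L d_{GH}(X_L,Y_L)$.

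I expect step (iii) to be the main obstacle: one must verify that intersecting barcodes factorwise is genuinely bottleneck-Lipschitz, i.e. that a near-matching of the factor persistence modules yields a near-matching of the Künneth product module. The cleanest route is not to argue combinatorially with intervals but to invoke the interleaving characterization of bottleneck distance: a $\delta$-interleaving of $H_{n_L}(X_L)$ with $H_{n_L}(Y_L)$ for every $L$ induces, by tensoring the interleaving morphisms and using the Künneth isomorphism, a $\delta$-interleaving of $H_n(\prod X_L)$ with $H_n(\prod Y_L)$ along the diagonal/ray direction; then the algebraic stability theorem (isometry theorem) converts interleaving distance back to bottleneck distance. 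An alternative, if one wants to stay elementary, is to directly exhibit the matching on intersected intervals $(p,\min_L q_L]$ and check the $\ell^\infty$-displacement of each intersected birth/death is bounded by the max of the factor displacements, treating unmatched (diagonal) bars with the usual half-length convention; this is routine but tedious. Either way, once the factorwise steps are assembled, step (iv) is immediate arithmetic, and the constant $\frac{2\lVert\mathbf{a}\rVert}{\sqrt{k}\min_L a_L}$ falls out exactly as stated.
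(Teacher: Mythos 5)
Your overall strategy --- affinely reparametrize each factor, apply a K\"unneth decomposition along the ray, and reassemble factorwise stability bounds --- is not the paper's argument, and as written it contains a circular dependency. The decomposition you invoke is not Lemma \ref{totally bounded Persistent Kunneth formula} (which concerns the simultaneous filtration with the \emph{same} scale $\epsilon$ in every factor) but its ray-restricted analogue, Lemma \ref{multi Persistent Kunneth formula}; and in the paper that lemma is \emph{deduced from} Proposition \ref{stability theorem}: for totally bounded, non-finite factors the ray K\"unneth barcode formula is obtained only by approximating each $X_L$ by a finite $X_L^r$ and controlling the resulting error with precisely the inequality you are trying to prove. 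So you cannot take the barcode-level description of $\mathsf{bcd}_n^{\mathcal{R},\ell}(X)$ as an input, and your steps (ii)--(iv), which manipulate matchings of those barcodes, do not yet have a foundation. The rescue is the interleaving variant you sketch at the end: a correspondence argument gives a $2d_{GH}(X_L,Y_L)$-interleaving of $H_{n_L}(\mathcal{R}_{\bullet}(X_L))$ and $H_{n_L}(\mathcal{R}_{\bullet}(Y_L))$ with no finiteness hypothesis; the affine reparametrization multiplies the interleaving constant by the slope $\lVert\mathbf{a}\rVert/(\sqrt{k}\,a_L)$; and tensoring the interleaving morphisms (using naturality of the K\"unneth isomorphism underlying Theorem \ref{Category Persistent Kunneth formula}) yields an interleaving of the product modules with constant $\max_L$ of the factor constants, whence the bottleneck bound by algebraic stability. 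If you go that way, your step (iii) is replaced by the easy fact that tensoring $\delta$-interleavings gives a $\delta$-interleaving --- but this must be said explicitly, because the combinatorial interval-intersection version of (iii) presupposes the barcode decomposition you do not yet have.

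For comparison, the paper's proof is shorter and works directly at the simplicial level, adapting Lemma 4.3 of Chazal--de Silva--Oudot: take a correspondence $C$ between $X$ and $Y$ whose coordinatewise distortion is $<\epsilon$ with $\epsilon$ close to $2\max_L d_{GH}(X_L,Y_L)$, check that the image under $C$ of a simplex of $\mathcal{R}^{\ell}_{t}(X)$ lies in $\mathcal{R}^{\ell}_{t+\lVert\mathbf{a}\rVert\epsilon/(\sqrt{k}\min_L a_L)}(Y)$ --- this is where the constant $\lVert\mathbf{a}\rVert/(\sqrt{k}\min_L a_L)$ appears, by exactly the same slope arithmetic as your step (i) --- and invoke their Proposition 4.2 to convert the correspondence into an interleaving of the ray-filtered Rips persistence modules. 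This route needs neither the K\"unneth formula nor any discussion of matchings, which is why the paper can then use Proposition \ref{stability theorem} as the engine behind Lemma \ref{multi Persistent Kunneth formula} rather than the other way around.
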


\begin{proof}
We prove the theorem with a similar argument as in the proof of Lemma 4.3 in \cite{chazal2014persistence}.
Let $C \subseteq X \times Y$ be a correspondence. By Proposition \ref{product totally}, $X$ and $Y$ are also totally bounded metric spaces.  Therefore, $d_{GH}(X,Y)$ is finite. Let $\epsilon > 2\max\limits_{L}d_{GH}(X_L,Y_L)$. $\sigma \in \Rip^{\ell}_{t}(X) \implies$ for every $x = (x_1,\cdots,x_k)$ and $\tilde{x} = (\tilde{x}_1,\cdots,\tilde{x}_k) \in \sigma$, $d^{X}_{L}(x_L,\tilde{x}_L) \le b_L + \sqrt{k}t\cdot {a_L \over \lVert \mathbf{a} \rVert}$ for every $L=1,\cdots,k$. Let $Y' \subseteq Y$ be any finite subset such that for every $y \in Y'$, there is $x \in \sigma$ such that $(x,y) \in C$. For any $y=(y_1,\cdots,y_k)$ and $\tilde{y} = (\tilde{y}_1,\cdots,\tilde{y}_k) \in Y'$, we obtain $d^Y_{L}\left( y_L, \tilde{y}_L \right) \le d^X_{L}\left( x_L, \tilde{x}_L \right) + \epsilon \le b_L+\sqrt{k}t \cdot {a_L \over \lVert \mathbf{a} \rVert} + \epsilon \le b_L+\sqrt{k}\left(t + {\lVert \mathbf{a} \rVert \epsilon \over \sqrt{k}\min\limits_{L}a_L} \right) \cdot {a_L \over \lVert \mathbf{a} \rVert}$ for every $L$. This implies that $Y' \in \Rip^{\ell}_{t+{\lVert \mathbf{a} \rVert\epsilon \over \sqrt{k}\min\limits_{L}a_L}}(Y)$. By Proposition 4.2 in \cite{chazal2014persistence}, $d_B\left(\mathsf{bcd}^{\Rip,\ell}_{n}(X),\mathsf{bcd}^{\Rip,\ell}_{n}(Y) \right) \le {\lVert \mathbf{a} \rVert \epsilon \over \sqrt{k}\min\limits_{L}a_L}\rightarrow {2\lVert \mathbf{a} \rVert \over \sqrt{k}\min\limits_{L}a_L} \max\limits_{L}d_{GH}(X_L,Y_L)$ as $\epsilon \rightarrow 2\max\limits_{L}d_{GH}(X_L,Y_L)$.
\end{proof}

\begin{lemma}[General version of persistent K\"unneth formula \uppercase\expandafter{\romannumeral2}]
\label{multi Persistent Kunneth formula}
Let $(X_1,d_{X_1}),\cdots, (X_k,d_{X_k})$ be totally bounded metric spaces. Then,
\begin{equation}
\mathsf{bcd}_{n}^{\Rip, \ell}(X_1 \times \cdots \times X_k , d_{\max}) = \left\{ J^{n_1,\ell}_1 \cap \cdots \cap J^{n_k,\ell}_k : J^{n_j,\ell}_j \in \mathsf{bcd}_{n_j}^{\Rip,\ell}(X_j,d_{X_j}) \ \text{and} \ \sum\limits_{j=1}^{k} n_j = n \right\}\end{equation}
for all $n\in\mathbb{Z}_{\ge 0}$ and $d_{\max}$ is the maximum metric.
\end{lemma}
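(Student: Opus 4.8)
The plan is to follow the structure of the proof of Lemma~\ref{totally bounded Persistent Kunneth formula}: first prove the formula when every $X_j$ is \emph{finite}, then recover the totally bounded case by approximating each $X_j$ by a finite net and passing to the limit, with the ray stability estimate of Proposition~\ref{stability theorem} playing the role that Proposition~\ref{app:stability theorem} plays there. For the finite case, fix the ray $\ell(t)=\mathbf{b}+\sqrt{k}\,t\,\mathbf{a}/\lVert\mathbf{a}\rVert$ with $\mathbf{a}>0$ and write $\ell_j(t)=b_j+\sqrt{k}\,t\,a_j/\lVert\mathbf{a}\rVert$, which is strictly increasing in $t$ since $a_j>0$. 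I would define functors $\mathcal{X}_j:\mathbb{R}\to\mathbf{Simp}$ by $\mathcal{X}_j(t)=\mathcal{R}_{\ell_j(t)}(X_j)$; these are monotone, and when $X_j$ is finite each $\mathcal{X}_j(t)$ is a finite complex, so $H_{n_j}(\mathcal{X}_j)$ is pointwise finite dimensional for all $0\le n_j\le n$. By construction of the ray restriction of the product Vietoris--Rips multi-filtration (Definitions~\ref{Def:multi-parameter} and~\ref{barcode of multi-parameter}), the product functor $t\mapsto\mathcal{X}_1(t)\times\cdots\times\mathcal{X}_k(t)$ coincides with $\mathcal{R}^{\ell}_{t}(X_1\times\cdots\times X_k)$, and $\mathsf{bcd}_{n_j}(\mathcal{X}_j)=\mathsf{bcd}_{n_j}^{\mathcal{R},\ell}(X_j,d_{X_j})$. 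Since $\mathbb{R}$ is the poset of a separable totally ordered set, Theorem~\ref{Category Persistent Kunneth formula} applies with $\mathbf{P}=\mathbb{R}$ and $\mathbf{S}=\mathbf{Simp}$ and yields exactly the claimed formula for finite $X_1,\dots,X_k$.

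For the totally bounded case, fix $r>0$ and choose a finite $r$-net $X_j^r\subseteq X_j$ for each $j$, so that $d_{GH}(X_j,X_j^r)<r$; by Proposition~\ref{product totally} the products are again totally bounded, and passing to the max-metric product does not increase the Gromov--Hausdorff distance, so $\max_L d_{GH}(X_L,X_L^r)<r$ as well. Proposition~\ref{stability theorem} then gives
\[
d_B\!\left(dgm_{n}^{\mathcal{R},\ell}(X_1\times\cdots\times X_k),\,dgm_{n}^{\mathcal{R},\ell}(X_1^r\times\cdots\times X_k^r)\right)\le\frac{2\lVert\mathbf{a}\rVert}{\sqrt{k}\,\min_L a_L}\,\max_L d_{GH}(X_L,X_L^r)<\frac{2\lVert\mathbf{a}\rVert\,r}{\sqrt{k}\,\min_L a_L},
\]
which tends to $0$ as $r\to0$. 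On the right-hand side, each single-factor barcode $\mathsf{bcd}_{n_j}^{\mathcal{R},\ell}(X_j^r)$ is an affine reparametrization of $\mathsf{bcd}_{n_j}^{\mathcal{R}}(X_j^r)$ under $\epsilon\mapsto(\epsilon-b_j)\lVert\mathbf{a}\rVert/(\sqrt{k}\,a_j)$, so Proposition~\ref{app:stability theorem} gives $\mathsf{bcd}_{n_j}^{\mathcal{R},\ell}(X_j^r)\to\mathsf{bcd}_{n_j}^{\mathcal{R},\ell}(X_j)$ in the bottleneck distance, and hence the multiset $\{J_1^{n_1,\ell}\cap\cdots\cap J_k^{n_k,\ell}:\sum_j n_j=n\}$ assembled from the nets converges to the one assembled from the $X_j$. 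Since the finite case already equates the two sides for each $r$, letting $r\to0$ equates them for $X_1,\dots,X_k$, which is the asserted identity.

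The step I expect to require the most care is this limiting argument, not any of the algebra: one must verify that both the left-hand barcode $dgm_{n}^{\mathcal{R},\ell}(X_1\times\cdots\times X_k)$ and the right-hand ``intersection'' multiset are continuous with respect to the bottleneck distance under net refinement, even though the cardinalities of the barcodes change with $r$. This is handled via the diagonal-completion convention (Definition~\ref{bottleneck distance}) together with the fact that intersecting two intervals sharing a lower endpoint is $1$-Lipschitz in the endpoints -- precisely the technical point already settled in the proof of Lemma~\ref{totally bounded Persistent Kunneth formula}, so the argument transfers. The only genuinely new ingredient is that the ray stability constant $2\lVert\mathbf{a}\rVert/(\sqrt{k}\,\min_L a_L)$ is finite and independent of $r$, which is exactly what Proposition~\ref{stability theorem} supplies.
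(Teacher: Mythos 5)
Your proposal follows essentially the same route as the paper's own proof: apply the categorical persistent K\"unneth formula (Theorem \ref{Category Persistent Kunneth formula}) to the ray-restricted functors $t\mapsto\mathcal{R}_{\ell_j(t)}(X_j^r)$ on finite nets, then pass to the totally bounded case by letting $r\to 0$ using the ray stability estimate of Proposition \ref{stability theorem}. Your additional remarks on the convergence of the right-hand intersection multiset make explicit a point the paper leaves implicit, but the argument is the same.
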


\begin{proof}
For each $j$, let $X_j^r$ be a finite subset of $X_j$ such that $d_{GH}(X_j,X_j^r)<r$, 
whose existence is guaranteed by Lemma~\ref{totally bounded Persistent Kunneth formula}. 
Let $X = X_1 \times \cdots \times X_k$ and $X^r = X_1^r \times \cdots \times X_k^r$. 
For each $t \ge 0$, the filtration along the ray $\ell$ satisfies 
$\Rip_{t}^{\ell}(X^r) 
= \Rip_{b_1 + \sqrt{k}t \cdot \tfrac{a_1}{\lVert \mathbf{a} \rVert}}(X_1^r) 
\times \cdots \times 
\Rip_{b_k + \sqrt{k}t \cdot \tfrac{a_k}{\lVert \mathbf{a} \rVert}}(X_k^r)$. 
Thus the filtration $\Rip^{\ell}(X^r)$ is the product filtration 
$\Rip^{\ell}(X_1^r)\times \cdots \times \Rip^{\ell}(X_k^r)$. 
By Theorem~\ref{thm:persistent-kunneth}, the barcode of $\Rip^{\ell}(X^r)$ satisfies the K\"unneth formula.  
Finally, by Proposition~\ref{stability theorem}, we have 
$d_B\!\left(\mathsf{bcd}^{\Rip,\ell}_{n}(X),\; \mathsf{bcd}^{\Rip,\ell}_{n}(X^r)\right) 
\le \frac{2\lVert \mathbf{a} \rVert}{\sqrt{k}\,\min\limits_{L} a_L}
\max\limits_{L} d_{GH}(X_L,X_L^r)
\le \frac{2r\lVert \mathbf{a} \rVert}{\sqrt{k}\,\min\limits_{L} a_L}$. 
Letting $r \to 0$ yields the desired formula for totally bounded metric spaces.
\end{proof}

\begin{theorem}[Exact Multi-parameter Persistent Homology (EMPH)]
\label{direction barcode}
Consider a ray $\ell$ in a filtration space whose direction vector is $\mathbf{a} = (a_{1},\cdots , a_{N})$ with each component $a_i > 0$, and whose endpoint is $\mathbf{b} = (b_1, \cdots , b_N)$. Then we have \begin{equation}
\label{eq:multi barcode}
\mathsf{bcd}_{n}^{\Rip,\ell}( \Psi_{f}) = \left\{ J_{1}^{n_1, \ell}\bigcap \cdots \bigcap J_{N}^{n_N, \ell} : J_{L}^{n_L, \ell} \in  \mathsf{bcd}_{n_L}^{\Rip,\ell} 	\left( \pi_L \Psi_{f}\right)  \ \text{and} \ \sum\limits_{L=1}^{N} n_L = n \right\}
\end{equation}
i.e.
$$J_{L}^{n, \ell} = \begin{cases} \left({-b_L \over \sqrt{N}a_L / \lVert \mathbf{a} \rVert},\infty\right), & \mbox{if }n=0 \\ \left({2r_L^f \sin\left(\pi {k \over 2k+1}\right) - b_L \over \sqrt{N}a_L / \lVert \mathbf{a} \rVert}  , {2r_L^f \sin\left(\pi {k+1 \over 2k+3}\right) - b_L \over \sqrt{N}a_L / \lVert \mathbf{a} \rVert}\right], & \mbox{if }n=2k+1 \\ \ \emptyset, & \mbox{otherwise}
\end{cases}. $$
\end{theorem}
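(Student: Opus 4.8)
The plan is to reduce the statement to two facts already established: the general persistent K\"unneth formula on a ray (Lemma \ref{multi Persistent Kunneth formula}) and the Vietoris--Rips barcode of a circle (Theorem \ref{barcode of circle}). Since $\Psi_f$ is isometric to $r_1^f\cdot\mathbb{S}^1\times\cdots\times r_N^f\cdot\mathbb{S}^1$ equipped with the maximum metric, and each factor $\pi_L\Psi_f\cong r_L^f\cdot\mathbb{S}^1$ is totally bounded, Lemma \ref{multi Persistent Kunneth formula} applies directly with $X_L=\pi_L\Psi_f$ and $k=N$, giving $\mathsf{bcd}_{n}^{\mathcal{R},\ell}(\Psi_f)=\left\{ J_1^{n_1,\ell}\cap\cdots\cap J_N^{n_N,\ell}: J_L^{n_L,\ell}\in\mathsf{bcd}_{n_L}^{\mathcal{R},\ell}(\pi_L\Psi_f),\ \sum_{L=1}^N n_L=n\right\}$, which is exactly the first displayed identity \eqref{eq:multi barcode}. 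Thus it suffices to identify the single-factor barcodes $\mathsf{bcd}_n^{\mathcal{R},\ell}(\pi_L\Psi_f)=\{J_L^{n,\ell}\}$ explicitly.

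For one factor the ray restriction is, by Definition \ref{one parameter reduction of multi-parameter persistent homology}, the one-parameter filtration $t\mapsto \mathcal{R}_{\phi_L(t)}(\pi_L\Psi_f)$ with $\phi_L(t)=b_L+\sqrt{N}\,t\cdot a_L/\lVert\mathbf{a}\rVert$. Because $a_L>0$, the scalar map $\phi_L$ is a strictly increasing affine bijection of the poset $(\mathbb{R},\le)$, hence an isomorphism of index categories; precomposition with $\phi_L$ is therefore an isomorphism of one-parameter filtrations, and of the associated persistence modules, between the standard Vietoris--Rips filtration $\mathcal{R}_\bullet(\pi_L\Psi_f)$ and $\mathcal{R}^\ell_\bullet(\pi_L\Psi_f)$. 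By the structure theorem for pointwise finite-dimensional one-parameter modules the interval decomposition is transported under $\phi_L^{-1}$: each bar $(c,d]$ of $\mathsf{bcd}_n^{\mathcal{R}}(\pi_L\Psi_f)$ becomes the bar $(\phi_L^{-1}(c),\phi_L^{-1}(d)]$ of $\mathsf{bcd}_n^{\mathcal{R},\ell}(\pi_L\Psi_f)$, where $\phi_L^{-1}(\epsilon)=(\epsilon-b_L)/(\sqrt{N}a_L/\lVert\mathbf{a}\rVert)$. Pointwise finite-dimensionality is available since $\pi_L\Psi_f$ is totally bounded: one runs the finite-subset approximation of Lemma \ref{multi Persistent Kunneth formula} together with the stability bound of Proposition \ref{stability theorem}, exactly as in the proof of Lemma \ref{totally bounded Persistent Kunneth formula}.

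It then remains to recall the un-reparametrized single-circle barcode. Because $\pi_L\Psi_f$ is isometric to $r_L^f\cdot\mathbb{S}^1$ and rescaling a metric by $r_L^f>0$ rescales every Vietoris--Rips parameter by $r_L^f$, Theorem \ref{barcode of circle} yields $\mathsf{bcd}_0^{\mathcal{R}}(\pi_L\Psi_f)=\{(0,\infty)\}$, $\mathsf{bcd}_{2k+1}^{\mathcal{R}}(\pi_L\Psi_f)=\left\{\left(2r_L^f\sin\!\left(\pi{k \over 2k+1}\right),\,2r_L^f\sin\!\left(\pi{k+1 \over 2k+3}\right)\right]\right\}$, and $\emptyset$ in all other degrees. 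Feeding these endpoints through $\phi_L^{-1}$ produces precisely the claimed formula for $J_L^{n,\ell}$; substituting back into the K\"unneth product from the first paragraph finishes the proof.

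The main obstacle is the bookkeeping of the second paragraph: making rigorous that restricting the multi-filtration to the ray and then projecting onto one coordinate coincides with a monotone affine reparametrization of that coordinate's Vietoris--Rips filtration, and that barcodes transform by the corresponding change of variables $\phi_L^{-1}$. Once this functoriality is nailed down, together with the total-boundedness hypotheses needed to invoke Lemma \ref{multi Persistent Kunneth formula}, the rest is a direct substitution of the circle barcode of Theorem \ref{barcode of circle}, rescaled by $r_L^f$ and then shifted and scaled by $\phi_L^{-1}$.
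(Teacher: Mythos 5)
Your proposal is correct and follows essentially the same route as the paper's proof: apply the general persistent K\"unneth formula on a ray (Lemma \ref{multi Persistent Kunneth formula}) to reduce to the single factors $\pi_L\Psi_f \cong r_L^f\cdot\mathbb{S}^1$, observe that the ray restriction on each factor is the monotone affine reparametrization $t\mapsto b_L+\sqrt{N}\,t\,a_L/\lVert\mathbf{a}\rVert$ of the standard Rips filtration, and push the rescaled circle barcode of Theorem \ref{barcode of circle} through its inverse. The only difference is presentational (the paper computes the factor barcodes first and then applies the K\"unneth formula, and leaves the reparametrization step implicit via a $\min_t$ expression), so no further comment is needed.
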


\begin{proof}
Note that $\Rip_{t}^{\ell}\left( \pi_L \Psi_{f} \right) = \Rip_{b_L + \sqrt{N}t \cdot {a_L \over \lVert \mathbf{a} \rVert}}\left( \pi_L \Psi_{f} \right) = \Rip_{b_L + \sqrt{N}t \cdot {a_L \over \lVert \mathbf{a} \rVert}}\left( r_L^f \cdot \mathbb{S}^1\right)$. Theorem \ref{barcode of circle} implies
$$\mathsf{bcd}_{n}^{\Rip,\ell}(\pi_{L} \Psi_{f})=
\begin{cases}
\left\{ \left({-b_L  \over \sqrt{N}a_L / \lVert \mathbf{a} \rVert},\infty\right) \right\}, & \mbox{if }n=0 \\
\left\{ J_{L}^{2k+1, \ell} \right\}, & \mbox{if }n=2k+1 \\
\emptyset, &\mbox{ otherwise}
\end{cases},$$
where 
$$J_{L}^{2k+1, \ell} := \left( \min\limits_{t} b_L + \sqrt{N}t \cdot {a_L \over \lVert \mathbf{a} \rVert} \ge 2r_L^f \sin\left(\pi{k \over 2k +1}\right) , \min\limits_{t}b_L + \sqrt{N}t \cdot {a_L \over \lVert \mathbf{a} \rVert} \ge 2r_L^f \sin\left(\pi {k+1 \over 2k+3}\right) \right]. 
$$
And by Lemma \ref{multi Persistent Kunneth formula}, we obtain 
\begin{eqnarray}
\mathsf{bcd}_{n}^{\Rip,\ell}( \Psi_{f}) &=& \mathsf{bcd}_{n}^{\Rip,\ell}	\left( \pi_1 \Psi_{f} \times \cdots \times \pi_N \Psi_{f} \right) 
\nonumber \\
&=& \Bigl\{ J_{1}^{n_1, \ell}\bigcap \cdots \bigcap J_{N}^{n_N, \ell} : J_{L}^{n_L, \ell} \in  \mathsf{bcd}_{n_L}^{\Rip,\ell} 	\left( \pi_L \Psi_{f}\right)  \ \text{and} \ \sum\limits_{L=1}^{N} n_L = n \Bigr\}.\nonumber 
\end{eqnarray}
\end{proof}

\begin{corollary}
\label{general one-parameter}
The exact formula implies that the one-dimensional reduction of multi-parameter persistent homology of the given time-series data in the diagonal ray is equivalent to the usual one-parameter persistent homology of the time-series data, i.e. if $\mathbf{a}= (1,\cdots,1)$ and $\mathbf{b} = (0,\cdots,0)$. Then $\mathsf{bcd}_{n}^{\Rip,\ell}( \textcolor{black}{\Psi_{f}}) = \mathsf{bcd}_{n}^{\Rip}( \Psi_{f})$.
\end{corollary}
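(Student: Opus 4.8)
The plan is to substitute the diagonal ray's data directly into the EMPH barcode formula of Theorem \ref{direction barcode} and observe that it collapses to the formula of Theorem \ref{cor:barcode}. With $\mathbf{a} = (1,\cdots,1)$ we have $\lVert \mathbf{a} \rVert = \sqrt{N}$, so the normalizing factor appearing in every bracket simplifies: $\sqrt{N}\,a_L / \lVert \mathbf{a} \rVert = \sqrt{N}\cdot 1 / \sqrt{N} = 1$ for each $L$. Together with $\mathbf{b} = (0,\cdots,0)$, i.e.\ $b_L = 0$, this turns $J_L^{0,\ell}$ into $(0,\infty)$, turns $J_L^{2k+1,\ell}$ into $\left(2r_L^f\sin\left(\pi\tfrac{k}{2k+1}\right),\, 2r_L^f\sin\left(\pi\tfrac{k+1}{2k+3}\right)\right]$, and leaves the remaining cases empty. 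Hence $J_L^{n,\ell} = J_L^{n}$ for every $L$ and every $n \ge 0$, where $J_L^{n}$ is the bar appearing in Theorem \ref{cor:barcode}. Equivalently, $\mathsf{bcd}_{n_L}^{\mathcal{R},\ell}(\pi_L\Psi_f) = \mathsf{bcd}_{n_L}^{\mathcal{R}}(\pi_L\Psi_f)$ as multisets.

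Next I would feed this identity of the per-factor barcodes into the two Künneth-type product formulas. The right-hand side of \eqref{eq:multi barcode} is the multiset of intersections $J_1^{n_1,\ell}\cap\cdots\cap J_N^{n_N,\ell}$ indexed by compositions $\sum_{L=1}^N n_L = n$; replacing each $J_L^{n_L,\ell}$ by $J_L^{n_L}$ makes it literally the right-hand side of the formula in Theorem \ref{cor:barcode}. Therefore $\mathsf{bcd}_n^{\mathcal{R},\ell}(\Psi_f) = \mathsf{bcd}_n^{\mathcal{R}}(\Psi_f)$ for all $n$.

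A cleaner conceptual route, which I would also mention, avoids the bracket bookkeeping entirely: for the diagonal ray, $\ell(t) = \mathbf{0} + \sqrt{N}\,t\cdot \tfrac{\mathbf{a}}{\lVert \mathbf{a}\rVert} = (t,\cdots,t)$, so by Definition \ref{one parameter reduction of multi-parameter persistent homology} and Proposition \ref{product rips} we get $\mathcal{R}^\ell_t(\Psi_f) = \mathcal{R}_t(\pi_1\Psi_f)\times\cdots\times\mathcal{R}_t(\pi_N\Psi_f) = \mathcal{R}_t(\Psi_f)$. Thus the one-parameter family obtained from the diagonal ray agrees term by term with the ordinary Vietoris--Rips filtration of $\Psi_f$ at the same parameter value, and hence induces the same persistent homology and the same barcode. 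The factor $\sqrt{N}$ built into Definition \ref{one parameter reduction of multi-parameter persistent homology} is precisely what makes the diagonal direction reproduce the unrescaled filtration rather than a reparametrization of it.

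There is essentially no hard step here: the only point requiring care is tracking the normalization constant $\sqrt{N}$ and confirming that it cancels exactly along the diagonal; once that is checked, the result is a direct substitution into the formulas already established in Theorem \ref{direction barcode} and Theorem \ref{cor:barcode}.
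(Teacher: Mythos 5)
Your proposal is correct and matches the paper's (implicit) argument: the corollary is stated as a direct consequence of Theorem \ref{direction barcode}, obtained exactly by substituting $\mathbf{a}=(1,\cdots,1)$, $\mathbf{b}=\mathbf{0}$ and noting that $\sqrt{N}a_L/\lVert\mathbf{a}\rVert=1$ collapses each $J_L^{n,\ell}$ to the $J_L^n$ of Theorem \ref{cor:barcode}. Your second, filtration-level observation that $\mathcal{R}^{\ell}_t(\Psi_f)=\mathcal{R}_t(\Psi_f)$ via Proposition \ref{product rips} is a clean bonus but amounts to the same cancellation of the $\sqrt{N}$ normalization.
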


Figure \ref{direction vector} shows the ray $\ell$ with the direction vector $\mathbf{a}$ and the endpoint vector $\mathbf{b}$. The diagonal ray (or the standard ray) is the ray with $\mathbf{a} = (1, \cdots, 1)$ and $\mathbf{b} = (0, \cdots, 0)$. 
\begin{figure}[hbt!]
    \centering
\includegraphics{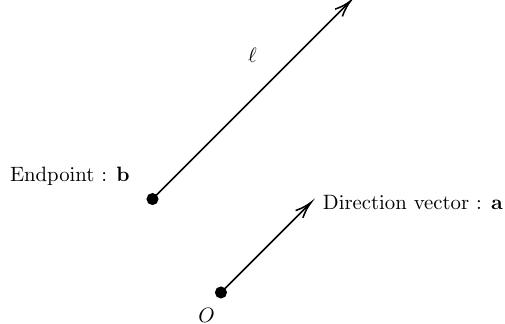}
    \caption{A ray in a multi-parameter space}
    \label{direction vector}
\end{figure}

\begin{corollary}
\label{multi barcode meaning}
If $\mathbf{b} = \mathbf{0}$, each bar in $\mathsf{bcd}_{n}^{\Rip,\ell}(\Psi_{f})$ represents the bar of the projected point cloud onto $P_{i_1} \oplus \cdots \oplus P_{i_k}$ for $k=1,\cdots,n$. That is, $\mathsf{bcd}_{n}^{\Rip,\ell}(\Psi_{f}) = \bigcup\limits_{1\le i_1 < \cdots <i_n \le N} \bigcup\limits_{1 \le k \le n}\mathsf{bcd}_{n}^{\Rip,\ell}	\left( \pi_{i_1 \cdots i_k} \Psi_{f} \right)$.
\end{corollary}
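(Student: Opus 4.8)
The plan is to mirror the proof of Theorem \ref{barcode meaning}, substituting the ray-version Theorem \ref{direction barcode} for Theorem \ref{cor:barcode} and Lemma \ref{multi Persistent Kunneth formula} for Lemma \ref{totally bounded Persistent Kunneth formula}. The one genuinely new ingredient is to see why the hypothesis $\mathbf{b}=\mathbf{0}$ is exactly what is needed: by Theorem \ref{direction barcode}, when $b_L=0$ the degree-zero bar is $J_L^{0,\ell}=\left(\tfrac{-b_L}{\sqrt{N}a_L/\lVert\mathbf{a}\rVert},\infty\right)=(0,\infty)$, while every nonzero-degree bar $J_L^{2k+1,\ell}$ has left endpoint $\tfrac{2r_L^f\sin(\pi k/(2k+1))}{\sqrt{N}a_L/\lVert\mathbf{a}\rVert}\ge 0$ (since $\mathbf{a}>0$ and the sine is nonnegative). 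Hence intersecting any finite collection of bars with a factor $(0,\infty)$ leaves it unchanged, which is precisely the step that permits discarding the coordinates of degree $0$.

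First I would take a generic element $J_1^{n_1,\ell}\cap\cdots\cap J_N^{n_N,\ell}$ of $\mathsf{bcd}_n^{\mathcal{R},\ell}(\Psi_f)$ with $\sum_L n_L=n$, and let $i_1<\cdots<i_k$ be the indices with $n_{i_j}>0$. Since $J_j^{0,\ell}=(0,\infty)$ for $j\notin\{i_1,\dots,i_k\}$ and the remaining factors have nonnegative left endpoint, this element equals $J_{i_1}^{n_{i_1},\ell}\cap\cdots\cap J_{i_k}^{n_{i_k},\ell}$. Regrouping the set in Theorem \ref{direction barcode} according to its active index set then gives
\[
\mathsf{bcd}_n^{\mathcal{R},\ell}(\Psi_f)=\bigcup_{1\le i_1<\cdots<i_k\le N}\ \bigcup_{1\le k\le n}\left\{J_{i_1}^{n_{i_1},\ell}\cap\cdots\cap J_{i_k}^{n_{i_k},\ell}:J_{i_L}^{n_{i_L},\ell}\in\mathsf{bcd}_{n_{i_L}}^{\mathcal{R},\ell}(\pi_{i_L}\Psi_f),\ \sum_{L=1}^k n_{i_L}=n\right\}.
\]

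Next I would use $\pi_{i_L}(\pi_{i_1\cdots i_k}\Psi_f)=\pi_{i_L}\Psi_f$ (projecting a product of planar circles onto one of its coordinate planes), so each $\mathsf{bcd}_{n_{i_L}}^{\mathcal{R},\ell}(\pi_{i_L}\Psi_f)$ may be rewritten with $\pi_{i_L}(\pi_{i_1\cdots i_k}\Psi_f)$, and then invoke the factorization $\pi_{i_1}(\pi_{i_1\cdots i_k}\Psi_f)\times\cdots\times\pi_{i_k}(\pi_{i_1\cdots i_k}\Psi_f)=\pi_{i_1\cdots i_k}\Psi_f$ as metric spaces with the maximum metric. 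Since each factor $\pi_{i_L}\Psi_f=r_{i_L}^f\cdot\mathbb{S}^1$ is (totally bounded and) compact, Lemma \ref{multi Persistent Kunneth formula} applies to this product and identifies the inner set above with $\mathsf{bcd}_n^{\mathcal{R},\ell}(\pi_{i_1\cdots i_k}\Psi_f)$, yielding the claimed decomposition $\mathsf{bcd}_n^{\mathcal{R},\ell}(\Psi_f)=\bigcup_{1\le i_1<\cdots<i_k\le N}\bigcup_{1\le k\le n}\mathsf{bcd}_n^{\mathcal{R},\ell}(\pi_{i_1\cdots i_k}\Psi_f)$.

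The main obstacle is merely bookkeeping. One must check that the inner sets indexed by $\{i_1,\dots,i_k\}$ with all $n_{i_L}\ge 1$ and the full K\"unneth sets $\mathsf{bcd}_n^{\mathcal{R},\ell}(\pi_{i_1\cdots i_k}\Psi_f)$ (which also allow some $n_{i_L}=0$) produce the same value once the outer union over all subsets and all $k$ is taken — the terms with a vanishing $n_{i_L}$ simply reappear in the contribution of a smaller active index set, using again that $J_{i_L}^{0,\ell}=(0,\infty)$ — and that the identity holds as multisets with multiplicities added. One should also emphasize that $\mathbf{b}=\mathbf{0}$ is not cosmetic: if some $b_L\ne 0$, then $J_L^{0,\ell}$ is a translated half-line and no longer acts as an identity for intersection, so the reduction to the active coordinates fails and the projected-point-cloud interpretation breaks down.
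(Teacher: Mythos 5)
Your proposal is correct and follows essentially the same route as the paper, whose proof of this corollary is simply ``It can be proved similarly to Theorem \ref{barcode meaning}'' --- i.e., drop the degree-zero factors, regroup by active index set, and reassemble via the K\"unneth formula on $\pi_{i_1\cdots i_k}\Psi_f$. Your explicit observation that $\mathbf{b}=\mathbf{0}$ is exactly what makes $J_L^{0,\ell}=(0,\infty)$ an identity for intersection (and that a nonzero $b_L$ would break this) is a useful clarification the paper leaves implicit, but it does not change the argument.
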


\begin{proof}
It can be proved similarly to Theorem \ref{barcode meaning}.
\end{proof}

In \eqref{eq:multi barcode}, the barcode formula is determined by the simple relation involving Fourier coefficients. The following example shows the difference between the method that simply uses  Fourier coefficients and the method that uses $\mathsf{bcd}_{n}^{\Rip,\ell}(\Psi_{f})$.
It also shows the benefit of considering $\mathsf{bcd}_{n}^{\Rip,\ell}(\Psi_{f})$.

{\color{black}
\begin{example}
\label{ex:benefit}
Consider the following two time-series data

\begin{equation}
  \left\{  \begin{array}{l}  f_1  = \cos t + 1\\
      f_2 = \cos 5t 
      \end{array}
      \right. \nonumber .  
\end{equation}

Let $N = 5$. The Fourier coefficients of $f_1$ and $f_2$ are  $(1,1,0,0,0,0)$ and $(0,0,0,0,0,1)$ respectively. Dynamic time warping \cite{sakoe1978dynamic} measures the difference between two time-series data. The simple Fourier method and the dynamic time warping method judge $f_1$ and $f_2$ to be different, but they do not provide a perspective on how we can regard them as the same. 

The sliding window embedding method can regard $f_1$ and $f_2$ as the same if we choose $M=2$ and $\tau={2\pi \over 3}$, and distinguish them by choosing $M=1$ and $\tau={\pi \over 10}$. More explicitly, for $M=2$ and $\tau={2\pi \over 3}$, in Proposition \ref{embedding dimension}, $u_1 = \left( 1, -{1\over 2}, -{1\over 2} \right)$, $v_1 = \left( 0, {\sqrt{3} \over 2}, -{\sqrt{3} \over 2} \right)$, and $u_5 = \left( 1, -{1 \over 2}, -{1 \over 2} \right)$, $v_5 = \left( 0, -{\sqrt{3} \over 2}, {\sqrt{3} \over 2} \right)$. Therefore, $SW_{M,\tau} f_1$ and $SW_{M,\tau} f_2$ become circles with radius $\sqrt{3}$. Similarly, for $M=1$ and $\tau={\pi \over 10}$, $u_1 = \left( 1, \cos  {\pi \over 10} \right)$, $v_1 = \left( 0, \sin  {\pi \over 10}  \right)$, and $u_5 = \left( 1, 0 \right)$, $v_5 = \left( 0, 1 \right)$. Since $SW_{M,\tau} f_1$ is an ellipse and $SW_{M,\tau} f_2$ is a circle, their Vietoris-Rips barcodes are not the same. {\color{black}However, due to the lack of a barcode formula for sliding window embedding, it is difficult to adjust $M$ and $\tau$ to obtain the desired results in general. And the sliding window embedding method becomes computationally expensive as the number of Fourier modes increases.}

On the other hand, with the analysis of the Liouville torus, we have the exact barcode formula, which can overcome the shortcomings of the sliding window embedding. If we choose the standard ray $\ell_1$, that is, if the direction vector is $(1, 1, 1, 1, 1)$ and the endpoint is $(0,0,0,0,0)$, we obtain 

$$
\mathsf{bcd}_{1}^{\Rip, \ell_1}(\Psi_{f_1}) = 
\mathsf{bcd}_{1}^{\Rip, \ell_1}(\Psi_{f_2}) = \left\{ \left(0,\sqrt{3}\right] \right\}.
$$
In this sense, we can regard those two are the same. However, if we choose a non-standard ray $\ell_2$, e.g. the ray with the direction vector of  $(1,10^{-6},10^{-6},10^{-6},10^{-6})$ and the endpoint of  $(0,0,0,0,0)$, then we obtain the result that shows the difference between $f_1$ and $f_2$ as  
$$
\mathsf{bcd}_{1}^{\Rip,\ell_2}(\Psi_{f_1}) = \left\{\left(0,\sqrt{{3 \over 5}} \cdot \sqrt{1+4\cdot 10^{-12}}\right]\right\}$$
and 
$$
\mathsf{bcd}_{1}^{\Rip,\ell_2}(\Psi_{f_2}) = \left\{\left(0,\sqrt{{3 \over 5}} \cdot 10^6 \cdot \sqrt{1+4\cdot 10^{-12}}\right]\right\}. 
$$ 

Here note that we used a small value, such as $10^{-6}$, in the direction vector $(1,10^{-6},10^{-6},10^{-6},10^{-6})$ instead of zero. This is done in order to 
to avoid the case that the denominator of the exact barcode formula in Theorem \ref{direction barcode} vanishes. 
\end{example}
}

\begin{example}
\label{ex:choice}
Consider the time-series data used in the previous example, i.e.,
\(
f(t) = \cos t + \tfrac{1}{2} \cos 2t.
\)
In this case, we have two non-trivial parameters corresponding to the modes $1$ and $2$, which define a two-dimensional filtration space. First, suppose that we choose the \textit{diagonal ray} $\ell_{1}$ with direction vector $\mathbf{a} = (1,1)$
and the endpoint $\mathbf{b}=(0,0)$.
Using the exact formula, we obtain
\[
\mathsf{bcd}_{1}^{\Rip,\ell_1}(\Psi_{f}) 
= \left\{ \left(0, \sqrt{3}\right], \left(0, \tfrac{\sqrt{3}}{2}\right] \right\}.
\]

Next, for another ray $\ell_{2}$ with direction vector $\mathbf{a} = (2,1)$ and the endpoint $\mathbf{b}=(0,0)$,
the corresponding one-dimensional barcode is
\[
\mathsf{bcd}_{1}^{\Rip,\ell_{2}}(\Psi_{f}) 
= \left\{ \left(0, \tfrac{\sqrt{30}}{4}\right]_{(2)} \right\}.
\]

Figure~\ref{filtrationspace} schematically illustrates the barcodes restricted to these two rays.
In the left figure (for $\ell_{1}$), the two red solid lines represent the two bars in 
$\mathsf{bcd}_{1}^{\Rip,\ell_{1}}(\Psi_{f})$.
In the right figure (for $\ell_{2}$), the barcode consists of two identical bars.

\begin{figure}[hbt!]
    \centering
    \includegraphics{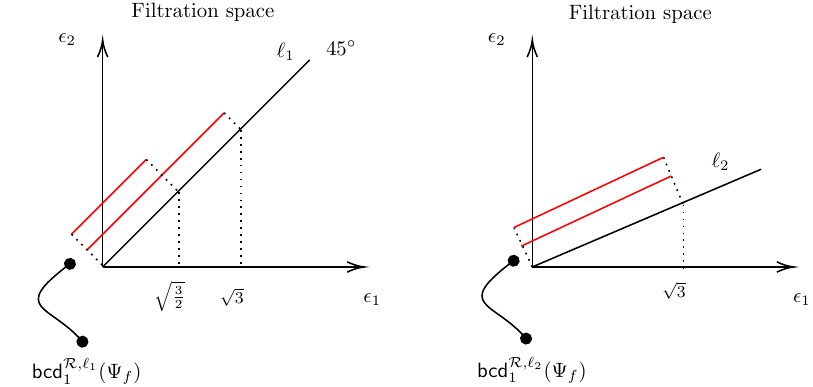}
    \caption{
Schematic illustration of the barcodes restricted to two different rays. 
The left figure corresponds to the ray $\ell_{1}$ with direction vector $\mathbf{a}=(1,1)$, 
where the two red solid lines in the filtration space represent the two bars in 
$\mathsf{bcd}_{1}^{\Rip,\ell_{1}}(\Psi_{f})$. 
The right figure corresponds to the ray $\ell_{2}$ with direction vector $\mathbf{a}=(2,1)$. 
Both cases are computed for $f(t)=\cos t+\tfrac{1}{2}\cos 2t$ with $\mathbf{b}=(0,0)$.
}

\label{filtrationspace}
\end{figure}

This example shows that choosing different rays may result in different barcode and provide different topological inference. In the following section, we will illustrate this observation with some real data.

\end{example}

\begin{example}
\label{motivation sec 4.3}
Let $f = \cos t + 20\cos 2t$ and $g = 2\cos t + 20\cos 2t$. Consider the ray $\ell_1$ with $\mathbf{a} = (1,1)$ and $\mathbf{b_1} = (\sqrt{3},0)$. Then $\mathsf{bcd}_{1}^{\Rip,\ell_1}(\Psi_{f}) = \left\{\left(0, 20 \sqrt{3}\right] \right\}$ and $\mathsf{bcd}_{1}^{\Rip,\ell_1}(\Psi_{g}) = \left\{\left(0,\sqrt{3}\right], \left(0, 20 \sqrt{3}\right] \right\}$. For the ray $\ell_2$ with $\mathbf{a} = (1,1)$ and $\mathbf{b_2} = (2\sqrt{3},0)$, we have $\mathsf{bcd}_{1}^{\Rip,\ell_2}(\Psi_{f}) = \left\{\left(0,20\sqrt{3}\right]\right\}$ and $\mathsf{bcd}_{1}^{\Rip,\ell_2}(\Psi_{g}) = \left\{\left(0,20\sqrt{3}\right]\right\}$. In this example, we can observe the followings.
\begin{enumerate}[(i)]
    \item If $\mathbf{b} \ne \mathbf{0}$, $\mathsf{bcd}_{1}^{\Rip,\ell}(\Psi_{f})$ may not necessarily be equal to $\mathsf{bcd}_{1}^{\Rip,\ell}(\pi_1 \Psi_{f}) \bigcup \mathsf{bcd}_{1}^{\Rip,\ell}(\pi_2 \Psi_{f})$ as in Example \ref{onedimension persistent}. \item We can neglect unnoticeable Fourier mode using the two rays $\ell_1$ and $\ell_2$ (the bar by the Fourier $1$-mode vanishes on $\ell_1$, but the bar by the $2$-mode does not vanish on $\ell_1$ and $\ell_2$). We can regard $f$ and $g$ as the same if we consider $\mathsf{bcd}_{1}^{\Rip,\ell_2}(\Psi_{f})$ and $\mathsf{bcd}_{1}^{\Rip,\ell_2}(\Psi_{g})$. A similar observation is made in Example \ref{twodimension persistent}: the prominent Fourier modes could be neglected in the inference by using the 2-dimensional barcode. This infers that varying the endpoint of the ray allows for the establishment of a threshold for each Fourier mode.
\end{enumerate}    

\end{example}

One of the advantages of the proposed method is that we can easily compute persistent homology on a curve in the filtration space with the exact barcode formula. Note that it is hard or impossible, in general, to compute persistent homology along a curved in the filtration space with arbitrary parameters. However, by using the Fourier bases as parameters for the filtration space and with the complete knowledge of the exact barcode in a line segment, we can easily estimate persistent homology in a curved in the filtration space. This method provides a high flexibility of choosing various rays and is useful in real applications. In our future work, we will further investigate multi-parameter persistent homology in curved rays. The following remark is on persistent homology in a curved ray.

\begin{remark}[Curved filtration in the multi-parameter space]
With the proposed exact method, it is possible to compute a curved filtration in the multi-parameter filtration space. In Theorem \ref{direction barcode}, we mentioned that the direction vector is related to the weights of frequencies. A curved ray means time (filtration parameter) varying weights of frequencies. With this, consider the following situation where we  regard both $\cos t$ and $\cos 2t$ to be the same while we consider  $2\cos t$ and $2\cos 2t$ to be different. More precisely, consider a curve 
$$c(t) = \begin{cases}
(t,t), & \mbox{if }0\le t \le \sqrt{3} \\
\left(t, {1 \over \sqrt{3}}t + \sqrt{3} -1\right), & \mbox{if }t \ge \sqrt{3}
\end{cases}.
$$
\textcolor{black}{Let $f(t) = \cos t$ and $g(t) = \cos 2t$.} Then, we have 
\begin{eqnarray}
\mathsf{bcd}_{1}^{\Rip,c}(\Psi_f) &=& \left\{ \left( 0, \sqrt{3} \right] \right\}, \nonumber \\ \mathsf{bcd}_{1}^{\Rip,c}(\Psi_g)) &=& \left\{ \left( 0, \sqrt{3} \right] \right\}, \nonumber \\
\mathsf{bcd}_{1}^{\Rip,c}(\Psi_{2f})) &=& \left\{ \left( 0, \sqrt{3} + \sqrt{2} \right] \right\}
\nonumber \\
\mathsf{bcd}_{1}^{\Rip,c}(\Psi_{2g})) &=& \left\{ \left( 0, \sqrt{3} + \sqrt{6} \right] \right\}. \nonumber 
\end{eqnarray}

\begin{figure}[hbt!]
    \centering
    \includegraphics[width=1 \linewidth]{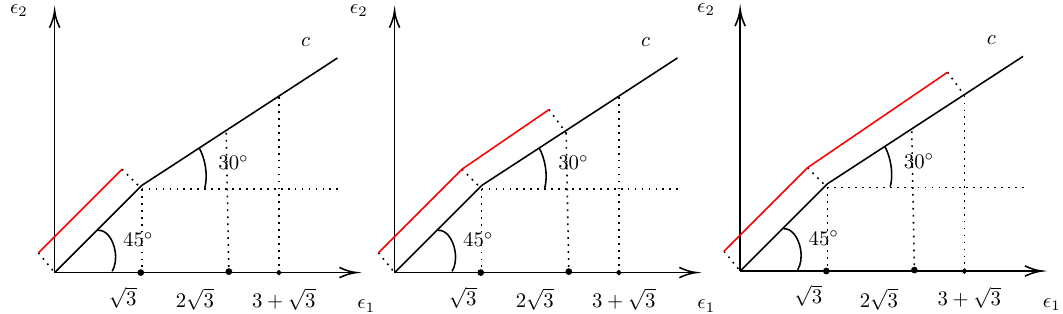}
     \caption{Curved rays in the filtration space. Left : $\cos t$ and $\cos 2t$, middle : $2\cos t$, Right : $2\cos 2t$}
    \label{curve in parameter}
\end{figure}
As shown above, the first two barcodes are exactly the same while the last two barcodes are different. Figure \ref{curve in parameter} shows the rays and the corresponding barcodes. In the left figure, the barcodes for $\cos t$ and $\cos 2t$ are shown while the middle and right figures show those for $2\cos t$ and $2\cos 2t$. The figure shows how those are distinguished on a curved ray in the filtration space. 
\end{remark}

{\color{black}\begin{remark}[Computational complexity of $\mathsf{bcd}_{n}^{\Rip,\ell}(\Psi_{f})$]
\label{remark:computational complexity}
Let $T$ be the length of time-series data, $n$ be the dimension of persistent homology, and $N$ be the degree of the truncated trigonometric polynomial. The complexity associated with the Fourier transform is $O(T\log T)$, and computation of all $J_{L}^{n_L, \ell}$ is $O(n \cdot N)$. The computational complexity of the  intersection of elements $J_{1}^{n_1, \ell}\bigcap \cdots \bigcap J_{N}^{n_N, \ell}$ is $O(N)$\footnote{The complexity of the intersection of two sets is $O(1)$. In the actual calculation, we find the maximum/minimum of birth/death time of $J_{L}^{n_L, \ell}$ for $1 \le L \le N$. The former becomes the birth time of $J_{1}^{n_1, \ell}\bigcap \cdots \bigcap J_{N}^{n_N, \ell}$, and the latter becomes the death time of $J_{1}^{n_1, \ell}\bigcap \cdots \bigcap J_{N}^{n_N, \ell}$. However, the complexity remains unchanged, as the complexity of the minimum/maximum operator is $O(N)$. \\ Source: \url{https://ics.uci.edu/~pattis/ICS-33/lectures/complexitypython.txt}.},  and we repeat this operation until we cover all cases of $\sum\limits_{L=1}^{N} n_L = n$. Therefore, the total computational complexity of $\mathsf{bcd}_{n}^{\Rip,\ell}(\Psi_{f})$ is $O(T\log T) + O(n \cdot N) + O\left(N \times \binom{N+n-1}{n}\right) = O(T\log T) + O\left(N \times \binom{N+n-1}{n}\right)$, where $\binom{N+n-1}{n}$ represents a combination. For example, for $n=1$, $O(T \log T) + O(N^2) \le O(T^2)$, and for $n=2$, $O(T \log T) + O(N^3) \le O(T^3)$.
\end{remark}}

\subsection{Exact multi-parameter persistent homology on a collection of rays}
\label{sec:EMPH on a collection of rays}
In the multi-parameter setting, while we have discussed the barcode $\mathsf{bcd}_{n}^{\Rip,\ell}(\Psi_f)$ obtained along a single ray, a natural follow-up question is how to integrate the information obtained from multiple rays. Example~\ref{motivation sec 4.3} demonstrated that short bars can be eliminated by adjusting the endpoints. In this section, as an example of utilizing barcodes obtained from multiple rays, we show that combining these barcodes allows for a parallel translation of the birth and death times of the bars.

\begin{definition}
Let $\mathcal{L} = 	\left\{\ell_1, \cdots, \ell_s \right\}$ be a collection of rays. Define
$$\mathsf{bcd}_{n}^{\Rip,\mathcal{L}}(\Psi_{f}) = \bigcup\limits_{i=1}^{s} \mathsf{bcd}_{n}^{\Rip,\ell_i}(\Psi_{f}).$$
\end{definition}

We now show that, for a ray \(\ell\) with direction vector \(\mathbf{a}=(a_1,\cdots,a_N)\) and endpoint \(\mathbf{b}=(b_1,\cdots,b_N)\), one can assign distinct birth time thresholds to each Fourier mode. To this end, we select an associated collection of rays and consider the union of their barcodes.

\begin{theorem}
\label{thm:collection of ray}
Let \(\ell\) have direction vector \(\mathbf{a}=(a_1,\cdots,a_N)\) and endpoint \(\mathbf{b}=(b_1,\cdots,b_N)\). 
Consider the collection of rays \(\mathcal{L}_{\ell}=\{\ell_1,\cdots,\ell_N\}\) associated with \(\ell\), 
where each \(\ell_L\) has direction \(\mathbf{a}^L=(a_1^L,\cdots,a_L,\cdots,a_N^L)\) and endpoint 
\(\mathbf{b}^L=(b_1^L,\cdots,b_L,\cdots,b_N^L)\), satisfying \(\|\mathbf{a}^L\|=\|\mathbf{a}\|\) and 
\(\max\limits_{i\ne L} \tfrac{r_L^f\sqrt{3} - b^L_i}{\sqrt{N}a^L_i / \lVert \mathbf{a}^L \rVert} \le 
\tfrac{-b_L}{\sqrt{N}a_L/ \lVert \mathbf{a}^L\rVert}\) for \(L=1,\cdots,N\). Then the EMPH on the collection of rays $\mathcal{L}_{\ell}$ associated with \(\ell\) is given by
\begin{equation}
\label{Eq:multiple lines}
\begin{aligned}
\mathsf{bcd}_{0}^{\Rip,\mathcal{L}_{\ell}}(\Psi_{f}) 
   &= \left\{\left(\tfrac{-b_L}{\sqrt{N} a_L / \lVert \mathbf{a} \rVert},\infty\right) : L=1,\cdots, N \right\}, \\[0.4em]
\mathsf{bcd}_{1}^{\Rip,\mathcal{L}_{\ell}}(\Psi_{f}) 
   &= \Bigl\{ I_{1}^{\ell}, \cdots, I_{N}^{\ell} : 
       I_{L}^{\ell} =\Bigl( \tfrac{-b_L}{\sqrt{N} a_L / \lVert \mathbf{a} \rVert}, 
                            \tfrac{r_L^f \sqrt{3} - b_L}{\sqrt{N} a_L / \lVert \mathbf{a} \rVert} \Bigr] \Bigr\}.
\end{aligned}
\end{equation}
\end{theorem}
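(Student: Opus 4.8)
The plan is to reduce everything to the EMPH formula of Theorem \ref{direction barcode} applied separately to each ray $\ell_L$, and then take the union over $L=1,\dots,N$; the only thing to check is that each $\ell_L$ contributes exactly one nontrivial bar, namely the one carried by the $L$-th Fourier mode. Throughout I will use that $\lVert\mathbf{a}^L\rVert=\lVert\mathbf{a}\rVert$ (so every normalizing factor $\sqrt{N}a^L_i/\lVert\mathbf{a}^L\rVert$ may be rewritten with $\lVert\mathbf{a}\rVert$), that $a^L_i>0$ for all $i$ (Definition \ref{one parameter reduction of multi-parameter persistent homology}), and that $2\sin(\pi/3)=\sqrt{3}$, so that on $\ell_L$ one has $J^{0,\ell_L}_i=(-b^L_i/(\sqrt{N}a^L_i/\lVert\mathbf{a}\rVert),\infty)$ and $J^{1,\ell_L}_j=(-b^L_j/(\sqrt{N}a^L_j/\lVert\mathbf{a}\rVert),\,(r^f_j\sqrt{3}-b^L_j)/(\sqrt{N}a^L_j/\lVert\mathbf{a}\rVert)]$, where for the index $L$ the relevant coordinates are $a^L_L=a_L$ and $b^L_L=b_L$.

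For $n=0$ the only composition of $0$ is the trivial one, so Theorem \ref{direction barcode} gives $\mathsf{bcd}_{0}^{\mathcal{R},\ell_L}(\Psi_f)=\{\bigcap_{i=1}^N J^{0,\ell_L}_i\}=\{(\max_i(-b^L_i/(\sqrt{N}a^L_i/\lVert\mathbf{a}\rVert)),\infty)\}$. Since $r^f_L\sqrt{3}\ge 0$ and $a^L_i>0$, for every $i\ne L$ we get $-b^L_i/(\sqrt{N}a^L_i/\lVert\mathbf{a}\rVert)\le (r^f_L\sqrt{3}-b^L_i)/(\sqrt{N}a^L_i/\lVert\mathbf{a}\rVert)\le -b_L/(\sqrt{N}a_L/\lVert\mathbf{a}\rVert)$, the last inequality being exactly the hypothesis on $\mathcal{L}_\ell$. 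Hence the maximum is attained at $i=L$ and $\mathsf{bcd}_{0}^{\mathcal{R},\ell_L}(\Psi_f)=\{(-b_L/(\sqrt{N}a_L/\lVert\mathbf{a}\rVert),\infty)\}$; the union over $L$ is the first line of \eqref{Eq:multiple lines}.

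For $n=1$ the compositions of $1$ are indexed by the mode $j\in\{1,\dots,N\}$ receiving the single $1$, so $\mathsf{bcd}_{1}^{\mathcal{R},\ell_L}(\Psi_f)=\{B^L_j:j=1,\dots,N\}$ with $B^L_j=J^{1,\ell_L}_j\cap\bigcap_{i\ne j}J^{0,\ell_L}_i$ and $\bigcap_{i\ne j}J^{0,\ell_L}_i=(\max_{i\ne j}(-b^L_i/(\sqrt{N}a^L_i/\lVert\mathbf{a}\rVert)),\infty)$. If $j=L$, the estimate from the $n=0$ case shows this maximum is $\le -b_L/(\sqrt{N}a_L/\lVert\mathbf{a}\rVert)$, which is the birth endpoint of $J^{1,\ell_L}_L$, so $B^L_L=J^{1,\ell_L}_L=I^{\ell}_L$. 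If $j\ne L$, then $L$ lies among the indices $i\ne j$, so the birth endpoint of $\bigcap_{i\ne j}J^{0,\ell_L}_i$ is $\ge -b_L/(\sqrt{N}a_L/\lVert\mathbf{a}\rVert)$, while the hypothesis on $\mathcal{L}_\ell$, read at the index $j$, forces the death endpoint of $J^{1,\ell_L}_j$ to be $\le -b_L/(\sqrt{N}a_L/\lVert\mathbf{a}\rVert)$; hence $B^L_j=\emptyset$. Therefore $\mathsf{bcd}_{1}^{\mathcal{R},\ell_L}(\Psi_f)=\{I^{\ell}_L\}$, and the union over $L=1,\dots,N$ gives the second line of \eqref{Eq:multiple lines}.

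The endpoint comparisons are routine once the normalizations are aligned; the step I expect to be the real crux is the vanishing of the off-diagonal bars $B^L_j$ for $j\ne L$, and this is precisely where the two design constraints on $\mathcal{L}_\ell$ enter: the $L$-th factor $J^{0,\ell_L}_L$ raises the birth time to $-b_L/(\sqrt{N}a_L/\lVert\mathbf{a}\rVert)$, while the inequality $\max_{i\ne L}(r^f\sqrt{3}-b^L_i)/(\sqrt{N}a^L_i/\lVert\mathbf{a}^L\rVert)\le -b_L/(\sqrt{N}a_L/\lVert\mathbf{a}^L\rVert)$ pushes every competing mode's death time below that value, so along $\ell_L$ only the $L$-th mode can survive in degrees $0$ and $1$. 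I would also note that these barcodes are multisets and that a degenerate bar (when $r^f_L=0$) is to be discarded, which does not affect the stated identities.
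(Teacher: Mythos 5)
Your proof is correct and follows essentially the same route as the paper's: apply the EMPH formula of Theorem \ref{direction barcode} to each ray $\ell_L$ separately, observe that the hypotheses collapse that ray's degree-$0$ and degree-$1$ barcodes to the single bar carried by the $L$-th mode, and take the union over $L$. You supply the endpoint comparisons that the paper leaves implicit; note only that killing the off-diagonal bar $B^L_j$ ($j\ne L$) requires the hypothesis with $r_j^f$ in the numerator rather than the literal $r_L^f$ appearing in the theorem statement --- this is the reading you in fact use, and the one under which the theorem (and the paper's terse proof) is correct.
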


\begin{proof}
From Eq.~(\ref{eq:multi barcode}), the EMPH along a single ray \(\ell_L\) is
\(
\mathsf{bcd}_{0}^{\Rip,\ell_L}(\Psi_{f}) 
   = \left\{ \bigcap\limits_{i=1}^N \left( \tfrac{-b_i^L}{\sqrt{N} a_i^L / \lVert \mathbf{a} \rVert},\infty \right) \right\} 
   = \Bigl\{ \left(\tfrac{-b_L}{\sqrt{N} a_L / \lVert \mathbf{a} \rVert},\infty \right) \Bigr\}
\)
and
\(
\mathsf{bcd}_{1}^{\Rip,\ell_L}(\Psi_{f}) 
   = \left\{ \left( \tfrac{-b_L}{\sqrt{N} a_L / \lVert \mathbf{a} \rVert}, 
                     \tfrac{r_L^f \sqrt{3} - b_L}{\sqrt{N} a_L / \lVert \mathbf{a} \rVert} \right] \right\}.
\)
By definition, the EMPH on the collection \(\mathcal{L}_\ell\) is obtained by taking the union of these barcodes over all \(L\), which completes the proof.
\end{proof}

\begin{example}
Suppose $f(t) = \cos t + 20\cos 2t$.  
Consider the ray $\ell$ with direction vector $\mathbf{a} = (1,1)$ and endpoint $\mathbf{b} = (\sqrt{3},0)$.  
Then one possible choice of the associated rays is
\[
\ell_1: \ \mathbf{a}^{1} = (1,1), \ \mathbf{b}^{1} = \left(\sqrt{3},2\sqrt{3}\right), 
\qquad
\ell_2: \ \mathbf{a}^{2} = (1,1), \ \mathbf{b}^{2} = \left(20\sqrt{3},0\right),
\]
which satisfy the hypothesis of Theorem~\ref{thm:collection of ray}.  
Figure~\ref{Fig_multiple_lines} illustrates the rays $\ell, \ell_1,$ and $\ell_2$.  
For the original ray $\ell$, we obtain
\[
\mathsf{bcd}_{1}^{\Rip,\ell}(\Psi_{f}) = \left\{\left(0, 20\sqrt{3}\right] \right\} \quad \text{(Example~\ref{motivation sec 4.3})}.
\]
In contrast, for the collection of rays associated with $\ell$, we obtain
\[
\mathsf{bcd}_{1}^{\Rip,\mathcal{L}_{\ell}}(\Psi_{f}) = \left\{\left(-\sqrt{3},0\right], \left(0, 20\sqrt{3}\right]\right\}.
\]
Thus, considering the collection of rays associated with $\ell$ corresponds not to discarding short bars as in Example~\ref{motivation sec 4.3}, but rather to a parallel translation of these bars.

\begin{figure}[hbt!]
    \centering
    \includegraphics[scale=0.7]{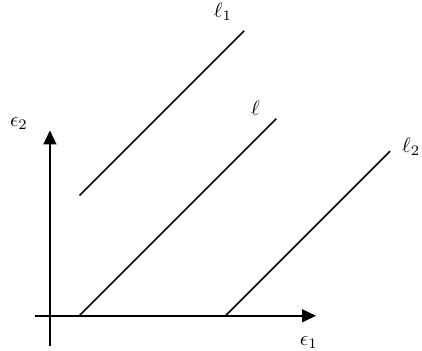}
    \caption{A ray $\ell$ with $\mathbf{a}=(1,1)$, $\mathbf{b}=(\sqrt{3},0)$ and its associated rays 
$\ell_{1}, \ell_{2}$.}

    \label{Fig_multiple_lines}
\end{figure}
\end{example}

If the endpoint vector is zero, Theorem~\ref{thm:collection of ray} reduces to the single ray case, as summarized in the following remark.
\begin{remark}
\label{for experiment}
If the endpoint of $\ell$ is zero vector, then $\mathsf{bcd}_{1}^{\Rip,\mathcal{L}_{\ell}}(\Psi_{f})= \mathsf{bcd}_{1}^{\Rip,\ell}(\Psi_{f})$. 
\end{remark}

\end{section}

\begin{section}{EMPH in machine learning workflows}
\label{Experiment}
The  EMPH method proposed in Section \ref{Sec : Application of multi-parameter theory and its interpretation} is highly efficient in terms of computational complexity and it provides a framework for variable topological inferences. 
Due to its significantly lower computational complexity compared to existing TDA methods and its capability for variable topological inferences, the EMPH can be implemented much more efficiently in machine learning workflows, thereby easily enabling topological inferences in such workflows. 

In order to implement TDA in a machine learning workflow, we need to vectorize the barcode. Persistence Landscape \cite{bubenik2015statistical} and persistence image \cite{adams2017persistence} have emerged as typical methods for transforming barcodes into vectors, serving as essential tools for combining TDA with machine learning. 
To apply the EMPH method in a machine learning workflow, we employ the following vectorization methods, i.e.  Betti sequence and persistence image.
\begin{enumerate}
    \item \textbf{(Betti sequence, \cite{umeda2017time})} Given an interval $I \subset \mathbb{R}$, consider $\epsilon_1 \le \epsilon_2 \le \cdots \le \epsilon_k$ representing equally spaced points within $I$. The $n$-dimensional Betti sequence $BS_n(B)$ of $n$-dimensional barcode $B$ is a $k$-dimensional vector defined as follows: $BS_n(B) = \bigl( \left| \left\{ [b,d) \in B : \epsilon_i \in [b,d) \right\} \right| \bigr)_{1 \le i \le k} \in \mathbb{R}^k$. This vectorization method discretizes the variation of the Betti numbers of simplicial complexes (e.g., $\Rip_{\epsilon}(X)$) with the  filtration parameter (e.g., $\epsilon$). The computation of $BS_n(B)$ is straightforward.

    \item \textbf{(Persistence image, \cite{adams2017persistence})} Let $\omega : \mathbb{R} \rightarrow \mathbb{R}$ be a weight function  and $\sigma \in \mathbb{R}$ a smoothing parameter. Given a square $S \subset \mathbb{R}^2$, consider equally subdivided squares $\left(S_{lm}\right)_{1 \le l,m \le k}$ of $S$. The $n$-dimensional persistence image $V_n(B)$ of  $n$-dimensional barcode $B$ is a $k^2$-dimensional vector defined as follows: $V_n(B) = \Bigl(\int_{(x,y) \in S_{lm}} \sum\limits_{[b,d) \in B} \omega(d-b) \cdot {1 \over 2\pi \sigma^2} e^{-\left[ (x-b)^2 + (y-(d-b))^2 \right] / 2\sigma^2} \ dxdy\Bigr)_{1\le l,m \le k} \in \mathbb{R}^{k^2}$. Here, the exponential term contributes to the stability of vectorization, indicating that a small perturbation of the barcode with respect to the bottleneck distance induces a small perturbation of its persistence image. In the following examples, we choose a weight function as $\omega(d-b) = d-b$, meaning that we assign a weight proportional to the ``persistence'' of homology.
\end{enumerate}

In practical cases, time-series data are discrete and finite, and we consider the domain of a time-series to be $\{0,\cdots,T-1\}$. The Fourier coefficients required for the  barcode computation are obtained using the fast Fourier transform. Algorithm \ref{alg:main} illustrates the simple EMPH usage workflow for time-series data analysis.

\begin{algorithm}[H]
\begin{algorithmic}[1]
\caption{EMPH of time-series data in a machine learning workflow}
\label{alg:main}

\STATE \textbf{Input:} $f_1, \cdots ,f_m : \left\{ 0, \cdots, T-1 \right\} \rightarrow \mathbb{R}$ (time-series data)
\vspace{0.1cm}
\STATE \textbf{Variables:}
$N \in \mathbb{Z}_{\ge 0}$ (degree of truncated Fourier series), $n \in \mathbb{Z}_{\ge 0}$ (dimension of barcode), $\ell = \left(\mathbf{a}, \mathbf{b}\right)$ (ray with direction vector $\mathbf{a}$ and endpoint $\mathbf{b}$) and $r \in \mathbb{Z}_{>0}$ (resolution for vectorization, i.e., vector dimension);

\vspace{0.1cm}

\FOR{$k = 1,\cdots ,m$}
\STATE Calculate the Fourier transforms $\hat{f}_k$ using the fast Fourier transform.

\STATE Calculate $\mathsf{bcd}_{n}^{\Rip,\ell}(\Psi_{f_k})$ (or $\mathsf{bcd}_{n}^{\Rip,\mathcal{L}_{\ell}}(\Psi_{f_k}))$ using Theorem \ref{direction barcode} (or Theorem~\ref{thm:collection of ray}).

\STATE Vectorize $\mathsf{bcd}_{n}^{\Rip,\ell}(\Psi_{f_k})$ (or $\mathsf{bcd}_{n}^{\Rip,\mathcal{L}_{\ell}}(\Psi_{f_k}))$ with resolution $r$.

\ENDFOR

\STATE
Integrate the vectorization of $\mathsf{bcd}_{n}^{\Rip,\ell}(\Psi_{f_k})$ into various machine learning techniques. 

\STATE \textbf{Ouput:}
Topological inferences for problems such as classification, clustering, etc. 

\end{algorithmic}
\end{algorithm}

\medskip

In Example~\ref{example 2}, we apply EMPH to a single ray, namely 
$\mathsf{bcd}_{n}^{\Rip,\ell}(\Psi_{f_k})$, following the setting of 
Section~\ref{sec:EMPH on a ray}. As noted in Remark~\ref{for experiment}, 
when the endpoint of $\ell$ is the zero vector we have 
$\mathsf{bcd}_{n}^{\Rip,\mathcal{L}_{\ell}}(\Psi_{f_k})
= \mathsf{bcd}_{n}^{\Rip,\ell}(\Psi_{f_k})$, 
so Example~\ref{example 2} can be regarded as a special case from the perspective of a collection of rays.  
In contrast, Examples~\ref{example 3} and \ref{example 4} also consider 
nonzero endpoints and perform experiments on both 
$\mathsf{bcd}_{n}^{\Rip,\ell}(\Psi_{f_k})$ and 
$\mathsf{bcd}_{n}^{\Rip,\mathcal{L}_{\ell}}(\Psi_{f_k})$ 
as defined in Sections~\ref{sec:EMPH on a ray} and 
\ref{sec:EMPH on a collection of rays}.  
For all subsequent examples, we fix the barcode dimension at $n=1$. To facilitate reproducibility, the implementation of the proposed EMPH methods is publicly available at \url{https://github.com/KeunsuKim/EMPH}.

\begin{example} 
\label{example 2}
In this example, we address the clustering problem of four different shapes (circle, square, star, and triangle) available in Kaggle\footnote{https://www.kaggle.com/datasets/smeschke/four-shapes}. To apply EMPH to this problem, we first transform the image data into time-series data. Specifically, we compute the center of each image using the \texttt{Scipy} library and extract its contour with the \texttt{scikit-image} library. From the extracted contour, we generate a time-series by rotating around the center at $1^\circ$ intervals and measuring the distance between the center and the contour, followed by normalization. Since the distances are uniformly sampled by angle, all time-series have a fixed length of $360$. Figure~\ref{Four shape} shows sample images of the four shapes and their corresponding time-series representations. In this study, we apply EMPH to the transformed time-series and show that different choices of rays can lead to distinct topological interpretations. In particular, this method can be used to reveal the existence of potential subclusters depending on the shape, and we experimentally demonstrate that differences between such subclusters can be quantitatively identified at both the image and time-series levels. For the clustering experiment, we use the one-dimensional barcode and its corresponding persistence image with a resolution of $2500$ and a bandwidth of $0.05$, and apply the $k$-means clustering method with $k$ set to $4$ or $5$.

\begin{figure}[hbt!]
    \centering
    \includegraphics[width=1 \linewidth]{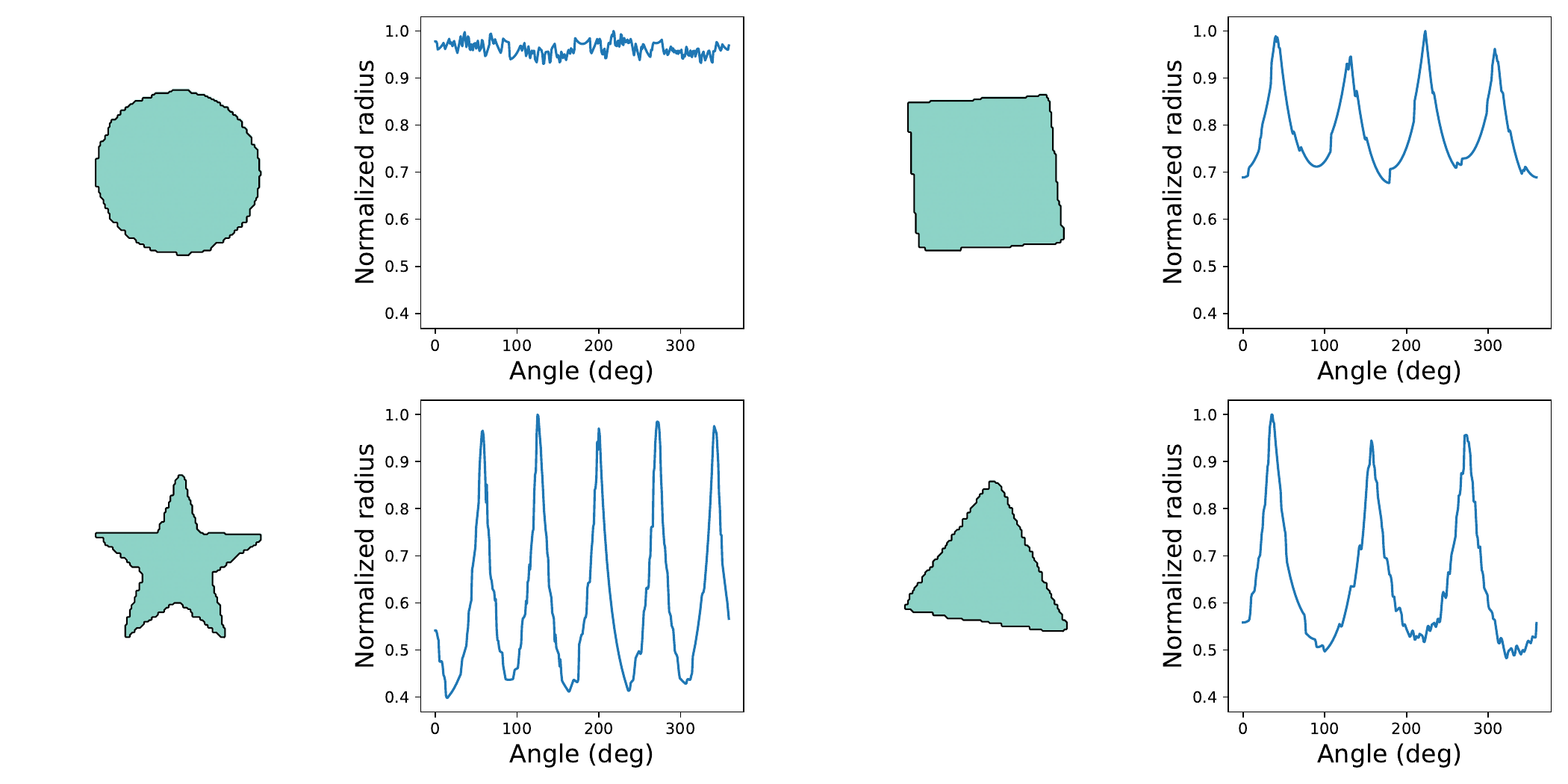}
    \caption{Images of four different shapes, i.e.,  circle, square, star,  and triangle and the corresponding time-series data with the length on the $y$-axis versus the rotation angle on the  $x$-axis. \textcolor{black}{Notice that the scales of each time-series data are different.}}
    \label{Four shape}
\end{figure}

In the case of $k=5$, we observe the emergence of a subcluster within shapes of the same type. To investigate the validity of this phenomenon, we quantitatively compared the subclusters both at the image level and at the time-series level. At the image level, we measured differences in Circularity and Eccentricity, and statistical analysis confirmed that the differences between subclusters were significant with respect to these descriptors (as indicated by $p$-values). At the time-series level, we examined the frequency domain representation, which further revealed distinctive patterns between the subclusters.

In order to quantify the morphological differences observed between the subclusters, 
we employed the \texttt{scikit\allowbreak-image} library, which provides a set of classical shape 
descriptors commonly used in image analysis~\cite{gonzalez2018image}. Among these, we focus on two descriptors that are particularly effective in capturing 
the differences between clusters: Circularity and Eccentricity.
 
\begin{enumerate}
    \item \textbf{Circularity.}  
    Circularity measures how close a shape is to a circle and is defined as  
    \[
        \text{Circularity} = \frac{4\pi A}{P^2},
    \]
    where $A$ is the area and $P$ is the perimeter. For a circle, this value is theoretically $1$, 
    and it decreases as the shape deviates from circularity.  
    Circularity is sensitive to boundary smoothness because, for the same area, jagged boundaries increase the perimeter $P$.
    Therefore, shapes with complex boundaries, such as stars, tend to have low circularity values.

    \item \textbf{Eccentricity.}  
    Eccentricity is defined as the eccentricity of the ellipse having the same second central 
    moments as the region. Using the eigenvalues $\lambda_{\max}$ (major axis) and 
    $\lambda_{\min}$ (minor axis) of the covariance matrix, it is given by  
    \[
        \text{Eccentricity} = \sqrt{1 - \frac{\lambda_{\min}}{\lambda_{\max}}}.
    \]
    This value is $0$ for a circle and increases as the shape becomes more elongated.
\end{enumerate}

\textbf{Subcluster analysis under the standard ray:}
Figure \ref{Four shape2} shows the clustering results with the standard ray. That is, with $\mathbf{a} = (1, 1, 1, 1, 1)$ and $\mathbf{b} = (0, 0, 0, 0, 0)$, the left figure presents the case of $k = 4$, while the right figure presents the case of $k = 5$ using $k$-means clustering. When $k = 4$, four clusters are formed as one would naturally expect. However, when $k = 5$, an unexpected subcluster appears within the square group. Figure \ref{fig:square cluster} illustrates the squares clustered into cluster 1 and cluster 2.

Table \ref{tab:square} compares the two square subclusters obtained under the standard ray. 
While the visual differences between the two subclusters are hardly noticeable, the quantitative analysis reveals clear distinctions. 
Cluster~1 exhibits significantly higher circularity ($p$-value: $0.000$) and eccentricity ($p$-value: $0.000$) than Cluster~2. 
In other words, the squares in Cluster~1 tend to have smoother boundaries and are more elongated, whereas those in Cluster~2 are less circular and less elongated.

Figure~\ref{fig:time_series_level_square} analyzes the differences between the two subclusters at the time-series level. 
While no clear distinction was observed in the time domain, replacing each data point 
$f_i$ with $\left(\vert \widehat{f_i}(1) \vert, \cdots, \vert \widehat{f_i}(5) \vert \right)$ and applying two-dimensional Principal Component Analysis (PCA) revealed a clear separation between the subclusters.

\textbf{Subcluster analysis under a non-standard ray:} Figure \ref{Four shape3} shows the clustering results with a non-standard ray, where we set $\mathbf{a} = (1, 1, 1, 1, 0.2)$ and $\mathbf{b} = (0, 0, 0, 0, 0)$. The left figure presents the case of $k = 4$, while the right figure presents the case of $k = 5$ using $k$-means clustering. When $k = 4$, four clusters are formed as one would naturally expect. However, when $k = 5$, an unexpected subcluster appears within the star group. Figure \ref{fig:star cluster} illustrates the stars clustered into cluster 2 and cluster 3.

Table \ref{tab:star} compares the two star subclusters obtained under the non-standard ray.
Unlike the star subclusters in Figure~\ref{fig:star cluster}, the star subclusters show visible differences in their average boundary shapes.
The average boundary of Cluster~2 appears more irregular and less smooth compared to that of Cluster~3, and it is also more elongated.
The quantitative analysis further confirms this observation: Cluster~2 exhibits significantly lower circularity ($p$-value: $0.000$) and significantly higher eccentricity  ($p$-value: $0.005$) than Cluster~3.

Figure~\ref{fig:time_series_level_star} analyzes the differences between the two star 
subclusters at the time-series level. While no clear distinction was observed in the 
time domain, examining the fifth Fourier coefficient $\lvert \widehat{f_i}(5) \rvert$ of each 
data point $f_i$ revealed a clear separation between the subclusters. This separation 
of subclusters can be interpreted as arising from the choice of the direction vector 
$\mathbf{a} = (1,1,1,1,0.2)$, which, according to Theorem~\ref{direction barcode}, greater weight is assigned to the length of the bar corresponding to the fifth-frequency component.

\begin{figure}[H]
    \centering
    \includegraphics[width=1.0\linewidth]{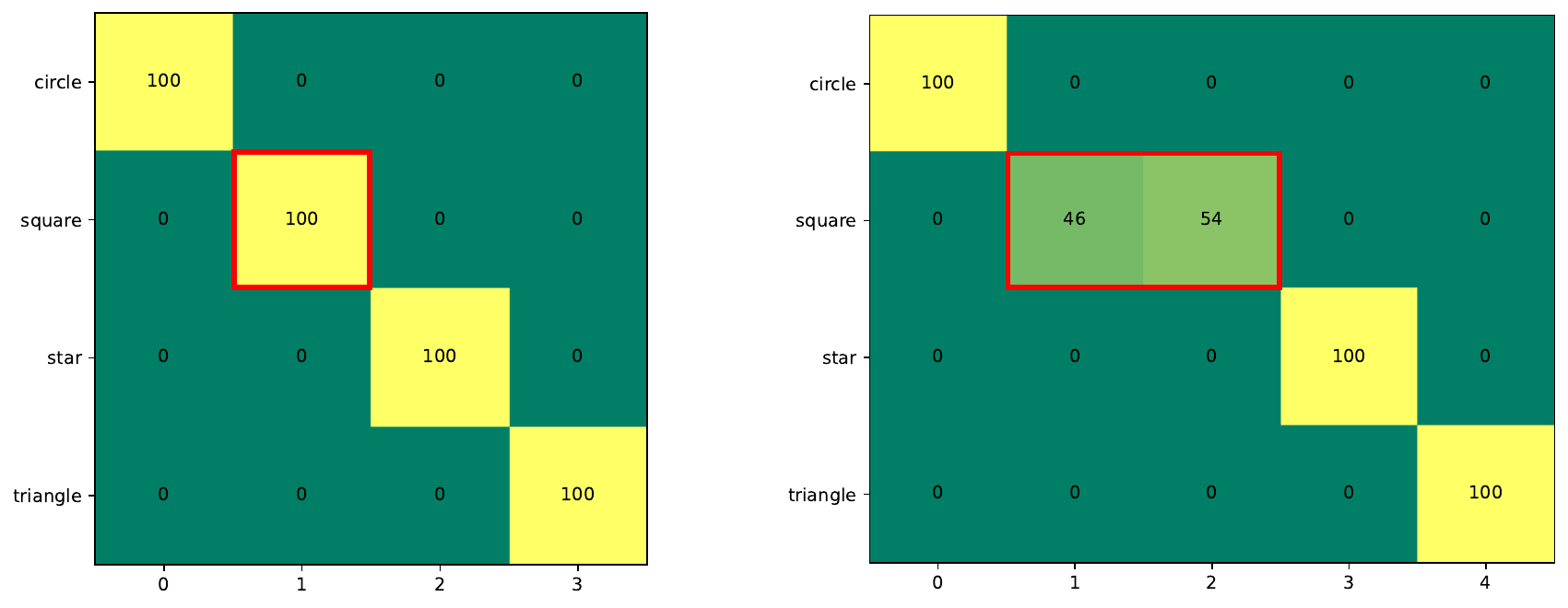}
\caption{{\bf The $k$-means clustering of four shape image groups with the standard ray.} 
Left: results with the standard ray $\mathbf{a} = (1,1,1,1,1)$ and $\mathbf{b} = (0,0,0,0,0)$ 
for $k=4$, where the four shape classes are cleanly separated as expected. 
Right: results for $k=5$, where the square group is further subdivided into two subclusters.}
    \label{Four shape2}
\end{figure}

\begin{figure}[H]
    \centering
    \includegraphics[width=1.0\linewidth]{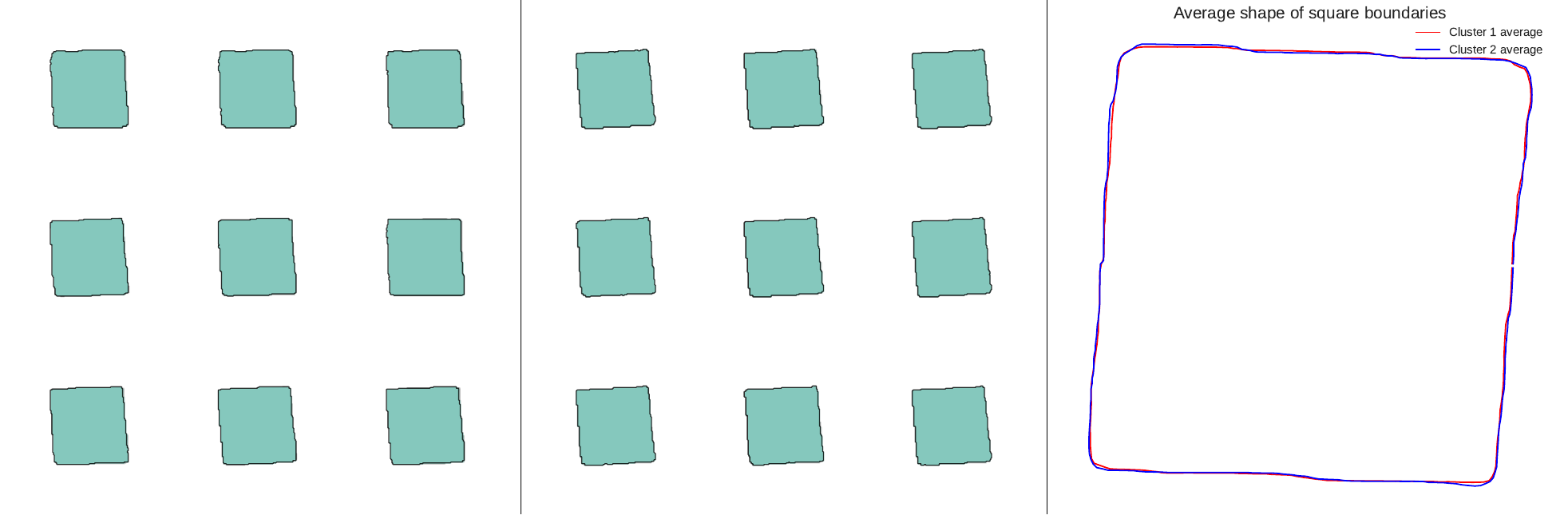}
\caption{{\bf Subclusters of the square shape.} 
These subclusters are obtained by further dividing the original square cluster shown in Figure~\ref{Four shape2}. Left: subcluster corresponding to Cluster 1, Center: subcluster corresponding to Cluster 2, Right: aligned average boundary plots.}

    \label{fig:square cluster}
\end{figure}

\begin{table}[H]
    \centering
    \begin{tabular}{lccc}
        \toprule
        Square & Cluster 1 & Cluster 2 & $p$-value \\
        \midrule
        Circularity   & $0.778 \pm 0.007$ & $0.772 \pm 0.011$ & 0.000 \\
        Eccentricity  & $0.325 \pm 0.017$ & $0.304 \pm 0.016$ & 0.000 \\
        \bottomrule
    \end{tabular}
    \caption{{\bf Comparison of square subclusters (standard ray).} 
Values are given as mean $\pm$ standard deviation. 
Cluster~1 exhibits significantly higher circularity ($p = 0.000$) and eccentricity ($p = 0.000$) compared to Cluster~2.}

    \label{tab:square}
\end{table}

\begin{figure}[H]
    \centering
    \includegraphics[width=0.85\linewidth]{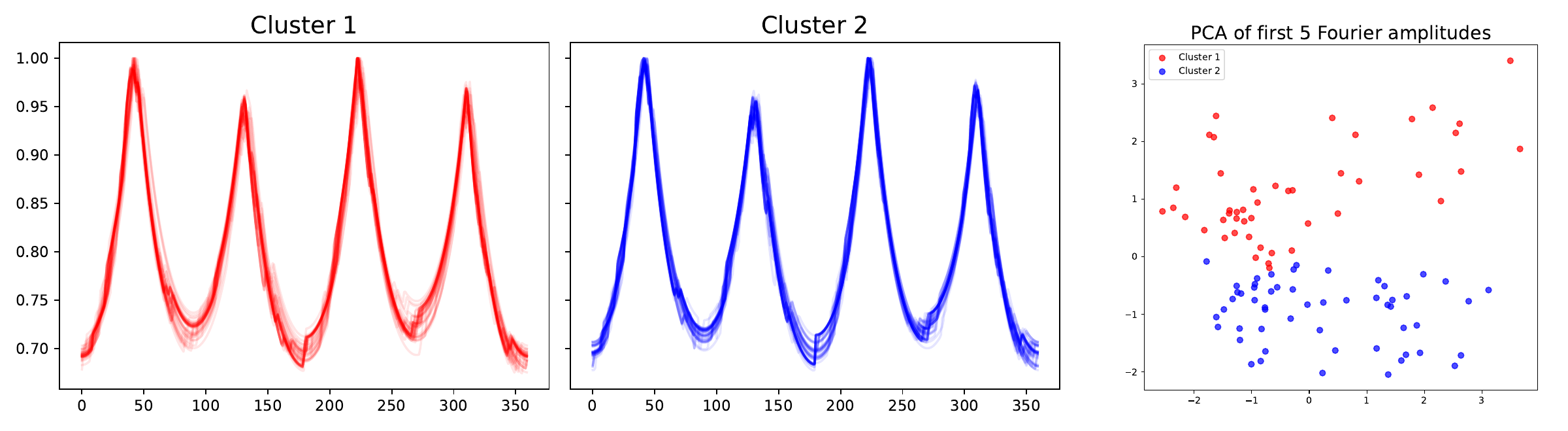}
    \caption{{\bf Subclusters of star shape at the time-series level.} 
Left: time-series of Cluster~1, Middle: time-series of Cluster~2, 
Right: two-dimensional PCA of the first five Fourier amplitudes 
(red: Cluster~1, blue: Cluster~2). 
While no clear distinction was observed in the time domain, 
the PCA representation reveals a clear separation between the two subclusters.}

    \label{fig:time_series_level_square}
\end{figure}


\begin{figure}[H]
    \centering
    \includegraphics[width=1.0\linewidth]{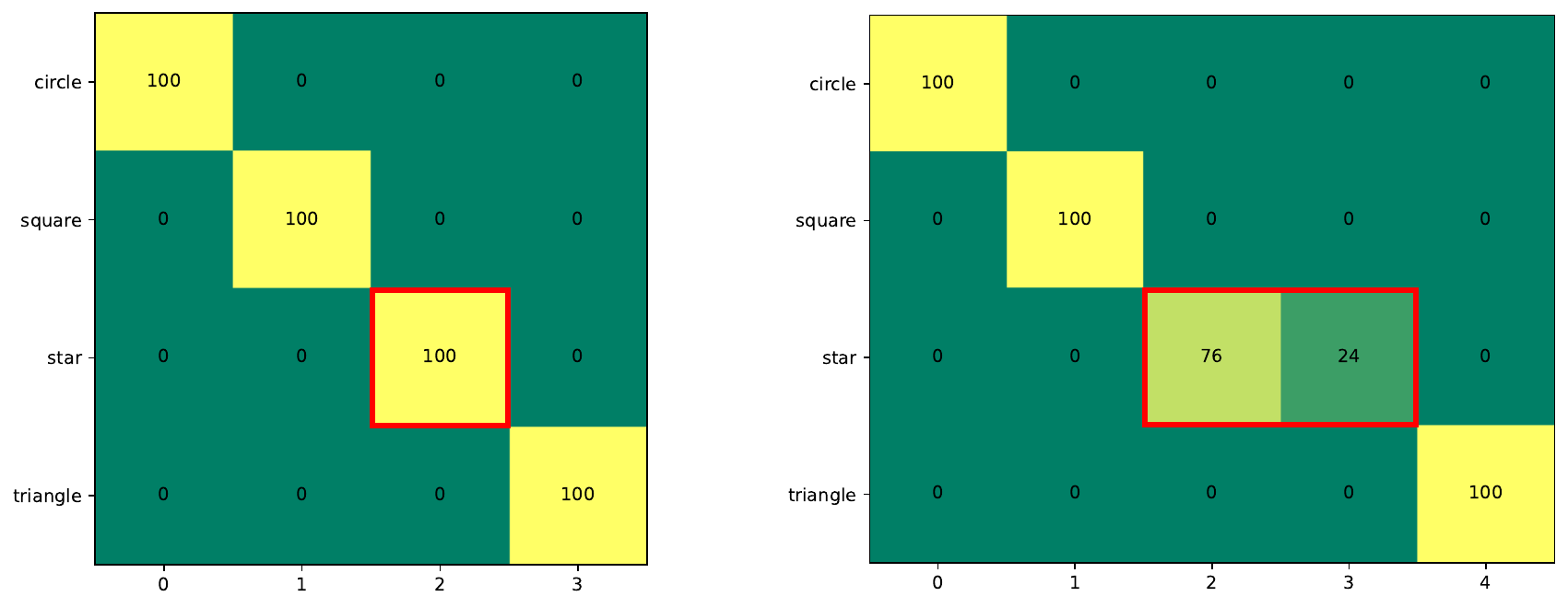}
\caption{{\bf The $k$-means clustering of four shape image groups with a non-standard ray.} 
Here the direction vector is set to $\mathbf{a} = (1,1,1,1,0.2)$ and $\mathbf{b} = (0,0,0,0,0)$. 
Left: results for $k=4$, where four shape classes are obtained as expected. 
Right: results for $k=5$, where the star group is further subdivided into two subclusters.}

    \label{Four shape3}
\end{figure}

\begin{figure}[H]
    \centering
    \includegraphics[width=1.0\linewidth]{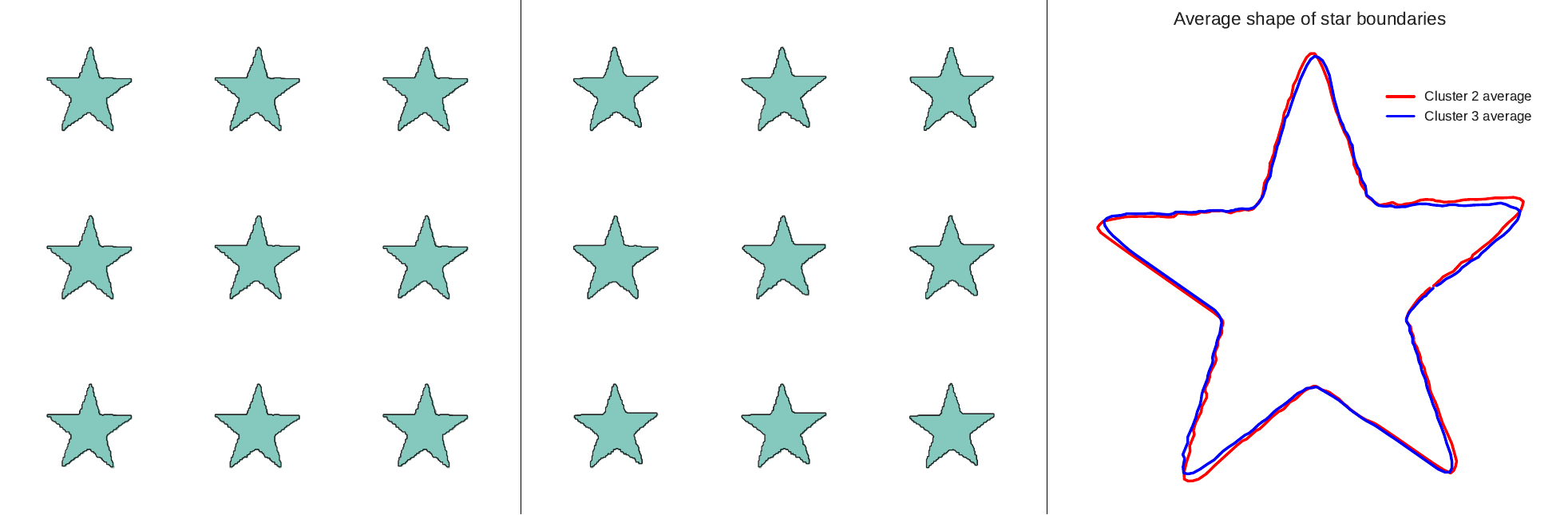}
    \caption{{\bf Subclusters of star shape.} 
These subclusters are obtained by further dividing the original star cluster shown in Figure~\ref{Four shape3}. Left: subcluster corresponding to Cluster 2, Center: subcluster corresponding to Cluster 3, Right: aligned average boundary plots.}
    \label{fig:star cluster}
\end{figure}

\begin{table}[H]
    \centering
    \begin{tabular}{lccc}
        \toprule
        Star & Cluster 2 & Cluster 3 & $p$-value \\
        \midrule
        Circularity   & $0.270 \pm 0.004$ & $0.275 \pm 0.004$ & 0.000 \\
        Eccentricity  & $0.306 \pm 0.021$ & $0.288 \pm 0.027$ & 0.005 \\
        \bottomrule
    \end{tabular}
    \caption{{\bf Comparison of star subclusters (non-standard ray).} 
Values are reported as mean $\pm$ standard deviation.
Cluster~2 exhibits significantly lower circularity ($p = 0.000$) and significantly higher eccentricity ($p = 0.005$) compared to Cluster~3.} 
    \label{tab:star}
\end{table}

\begin{figure}[H]
    \centering
    \includegraphics[width=0.9\linewidth]{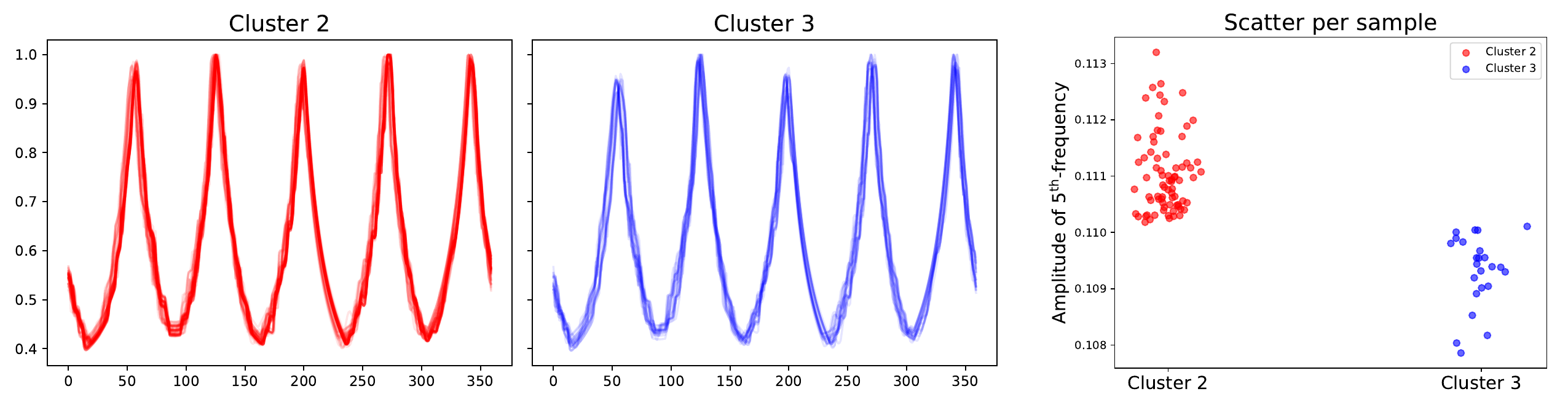}
    \caption{{\bf Subclusters of star shape at the time-series level.} 
Left: time-series of Cluster~2, Middle: time-series of Cluster~3, 
Right: scatter plot of the fifth Fourier coefficient amplitude (red: Cluster~2, blue: Cluster~3). 
While no clear distinction was observed in the time domain, 
the separation in the fifth-frequency component reveals a clear distinction between the two subclusters.}

    \label{fig:time_series_level_star}
\end{figure}

\end{example}

\subsubsection*{\textbf{Methods used in Examples~\ref{example 3} and \ref{example 4}:}}
Example~\ref{example 2} demonstrated the ability of EMPH to reveal hidden subcluster structures through clustering experiments. 
In Examples~\ref{example 3} and \ref{example 4}, we instead evaluate its classification performance by comparing EMPH with several existing approaches. 
The methods used in our experiments are summarized below (*SWE: Sliding Window Embedding with Vietoris–Rips complex using the specified norm).

\begin{itemize}
    \item \textbf{SWE-based methods:}

In sliding window embeddings, both the embedding dimension $M+1$ and the delay parameter $\tau$ are crucial, and several strategies have been proposed for their optimal selection. For example, \cite{perea2015sliding} and its application \cite{perea2015sw1pers} recommend setting the embedding dimension $M$ to be at least twice the number of Fourier modes. They also suggest choosing the delay parameter as $\tau = \tfrac{2\pi}{(M+1)Z}$, where $Z$ denotes the expected number of periods of the time-series data. Other approaches include selecting $M$ by minimizing the number of false nearest neighbors \cite{kennel1992determining}, or determining $\tau$ as the first local minimum of the mutual information between a time-series and its $\tau$-delayed time-series \cite{fraser1986independent}.

In this section, we determine $M$ and $\tau$ by performing a grid search over multiple combinations, since the optimal values of these parameters may vary across data points. Moreover, in the method of \cite{perea2015sliding}, the optimal value of $Z$ is not known a priori, which further motivates this approach. Note that all experiments here use discrete time-series with time steps $t \in \{0,\cdots,T-1\}$, and $\tau$ is treated as an integer measured in samples.

For convenience, suppose a time-series is given as a function $f:\{0,\cdots,T-1\}\to\mathbb{R}$, where $f(t)=x_t$. We identify this time-series with $\mathbf{x} = (x_0,\cdots,x_{T-1})$.

    \begin{itemize}
    \item Method A: Given a time-series $\mathbf{x}=\left(x_0,\cdots,x_{T-1}\right)$, construct the sliding window embedding
\begin{equation}
    X = \bigl\{ (x_t, x_{t+\tau}, \cdots, x_{t+M\tau}) \in \mathbb{R}^{M+1} \;\big|\; 0 \le t \le T - M\tau - 1 \bigr\}.
    \label{eq:sliding}
\end{equation}
Build the Vietoris-Rips complex $\Rip_{\epsilon}(X)$ under the maximum metric, compute its barcode, transform it into a Betti sequence, and use this sequence as input to a Support Vector Machine (SVM) classifier.

  \item Method B: Same as Method A, except that the barcode is transformed into a persistence image.

    \item Method C: Same as Method A, but using the Euclidean metric instead of the maximum metric.

    \item Method D: Same as Method B, but using the Euclidean metric instead of the maximum metric.
\end{itemize}

    \item \textbf{Non-TDA methods:}
    \begin{itemize}
        \item DTW: Dynamic Time Warping (DTW) is a classical technique for measuring similarity 
between two time-series that may be misaligned in phase or exhibit local variations in speed. 
Given time-series data $\mathbf{x}=\left(x_0,\cdots,x_{T-1}\right), \mathbf{y}=\left(y_0,\cdots,y_{T-1}\right)$, the DTW distance is defined as
        \[
            \mathrm{DTW}(\mathbf{x},\mathbf{y}) = \min_{\pi} \sum_{(i,j)\in\pi} |x_i-y_j|,
        \]
        where $\pi$ ranges over alignment paths from $(0,0)$ to $(T-1,T-1)$ that move 
only right, up, or diagonally and never decrease in either index. Time-series classification is then performed by the $1$-nearest neighbor method with respect to this metric.

        \item Fourier: Compute the discrete Fourier transform $\hat{\mathbf{x}}(k) = \frac{1}{T} \sum\limits_{t=0}^{T-1} x_t e^{-2\pi i kt/T}$ and use the feature vector
        \[
            (|\hat{\mathbf{x}}(1)|,|\hat{\mathbf{x}}(2)|,\cdots,|\hat{\mathbf{x}}(N)|)
        \]
        as input to an SVM classifier. Here, $N$ is the preset maximum frequency and we also vary $N$ in the experiments.
    \end{itemize}
    
    \item \textbf{Proposed methods:}
    \begin{itemize}

\item EMPH-Betti (SR): In Algorithm~\ref{alg:main}, use 
$\mathsf{bcd}_{n}^{\Rip,\ell}(\Psi_{f_k})$ (single ray), vectorize by Betti sequences, and classify with an SVM.

\item EMPH-PI (SR): In Algorithm~\ref{alg:main}, use 
$\mathsf{bcd}_{n}^{\Rip,\ell}(\Psi_{f_k})$ (single ray), 
vectorize by persistence images, and classify with an SVM.

\item EMPH-Betti (CR): In Algorithm~\ref{alg:main}, use 
$\mathsf{bcd}_{n}^{\Rip,\mathcal{L}_{\ell}}(\Psi_{f_k})$ (collection of rays associated with $\ell$), 
vectorize by Betti sequences, and classify with an SVM.

\item EMPH-PI (CR): In Algorithm~\ref{alg:main}, use 
$\mathsf{bcd}_{n}^{\Rip,\mathcal{L}_{\ell}}(\Psi_{f_k})$ (collection of rays associated with $\ell$), 
vectorize by persistence images, and classify with an SVM.
    
\end{itemize}

\item \textbf{Hyperparameter tuning}

\begin{enumerate}
    \item (Methods A–D): For the sliding window methods (Methods A–D), we conducted a grid search. In Example~\ref{example 3} and Example~\ref{example 4}, the search space comprised embedding dimensions $(M+1)$ of 2, 3, 4, 5, 10, and 20; delay parameters $\tau$ of 1, 2, 3, 4, 5, 10, and 20 (subject to $T - M\tau - 1 > 0$, see Eq.~\eqref{eq:sliding}); resolutions (vectorized dimension) of 100 and 2500; and persistence image bandwidths of 0.1 and 1.

    \item (Fourier): 
For the Fourier method, we varied the maximum frequency $N$.
In Example~\ref{example 3}, $N$ was chosen from 10, 20, and 40. In Example~\ref{example 4}, $N$ was chosen from 20, 40, 60, 80, and 100.

\item (EMPH):
In Algorithm~\ref{alg:main}, the ray $\ell$ appears as a variable and therefore must be specified in advance.  
We treated the choice of rays as part of the overall hyperparameter search.  
In Example~\ref{example 3}, we randomly selected 100 rays, while in Example~\ref{example 4} we selected 500 rays.  
Each ray $\ell=(\mathbf{a},\mathbf{b})$ with $\mathbf{a}=(a_1,\cdots,a_N)$ and $\mathbf{b}=(b_1,\cdots,b_N)$ was generated componentwise as  
$a_L = 1 + 0.1 \cdot \epsilon_L$ with $\epsilon_L \in \{0,\cdots,10\}$ and  
$b_L = 0.1 \cdot \eta_L$ with $\eta_L \in \{0,\cdots,50\}$, chosen at random for each $L=1,\cdots,N$.  
In addition, in Example~\ref{example 3} we searched over resolutions of 100 and 2500,  
maximum Fourier frequency $N$ set to 10, 20, or 40, and persistence image bandwidths of 0.1 and 1.  
In Example~\ref{example 4}, the search space included resolutions of 100 and 2500,  
maximum Fourier frequency $N$ set to 20, 40, 60, 80, or 100, and persistence image bandwidths of 0.1 and 1.  
The reported results correspond to the best performance obtained within these search spaces.

\end{enumerate}

\end{itemize}

All experiments were conducted on a server equipped with an Intel Core i9-10850K CPU (10 cores, 20 threads, 3.60 GHz) and 64 GB of RAM.

\begin{example} 
\label{example 3}
In this example, we apply the proposed EMPH method to the Proximal Phalanx Outline Age Group Dataset from the UCR archive \cite{dau2019ucr}. The original dataset consists of image data from different age groups. The Proximal Phalanx consists of two components, one indicated in blue and the other in red. For each component, we consider its own center, and the Euclidean distance from the center to the outline is measured as the axis rotates counterclockwise from $0$ to $2\pi$. The resulting measurements are represented as a time-series. The right figure of Figure~\ref{fig:Proximal} shows the time-series obtained by concatenating the two componentwise time-series and then standardizing the result. In the experiment, three age groups were used: 0–6, 7–12, and 13–19 years old. The training and test sets consist of 400 and 205 samples, respectively. The task is to classify the given data into the three categories corresponding to the age groups.

\begin{figure}[hbt!]
    \centering
    \includegraphics[width=1 \linewidth]{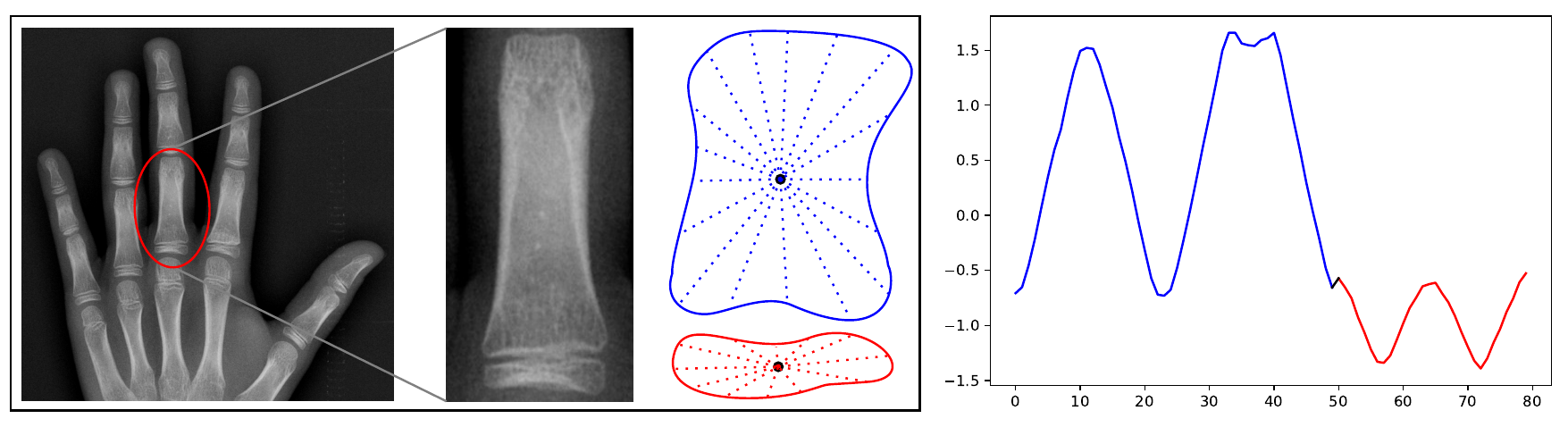}
    \caption[]{Left: Proximal phalanx outline image\footnotemark. Right: 
    Standardized time-series obtained by measuring the Euclidean distance between the center and the outline at each angle. \cite{bagnall2014predictive}.}
    \label{fig:Proximal}
\end{figure} 

\footnotetext{We used a hand X-ray image from the bone-age dataset created by Universidade Presbiteriana Mackenzie via Roboflow Universe (CC BY 4.0), \url{https://share.google/images/eVDXYlwyG4HgvjJ1q}.}

Tables~\ref{fig:Proximal_table12} and \ref{fig:Proximal_table12-3} summarize the classification results on the Proximal Phalanx Outline Age Group dataset using the best hyperparameters found by grid search. Table~\ref{fig:Proximal_table12} presents the performance of the SWE-based and DTW methods, while Table~\ref{fig:Proximal_table12-3} shows the results of the Fourier baseline and the EMPH methods. The direction vectors and endpoints given below Table~\ref{fig:Proximal_table12-3} correspond to the choices that yielded the highest accuracy. As the results show, the EMPH methods achieve the best overall accuracy.

\begin{table}[hbt!]
\centering
\begin{tabular}{|c||c|c|c|c|c|}
\hline
Method & Accuracy (\%) & Embedding dimension & Delay $\tau$ & Resolution & Bandwidth \\ \hline
Method A & 84.88 & 5 & 2 & 100 & -- \\ \hline
Method B & 86.34 & 5 & 2 & 2500 & 1.0 \\ \hline
Method C & 85.85 & 5 & 5 & 100 & -- \\ \hline
Method D & 85.85 & 2 & 5 & 100 & 1.0 \\ \hline
DTW      & 80.49 & -- & -- & -- & -- \\ \hline
\end{tabular}
\caption{{\bf SWE-based and DTW methods:} 
Classification accuracy on the Proximal Phalanx Outline Age Group dataset using the best hyperparameters obtained through grid search.}

\label{fig:Proximal_table12}
\end{table}

\begin{table}[hbt!]
\centering
\begin{tabular}{|c||c|c|c|c|c|}
\hline
Method & Accuracy (\%) & $N$ & Ray & Resolution & Bandwidth  \\ \hline
Fourier     & 84.39    & 20  & --  & -- & --\\ \hline
EMPH-Betti (SR)  & 86.34    & 10 & ($\mathbf{a}_1, \mathbf{b}_1$) & 100  & -- \\ \hline
EMPH-PI (SR)  & \textbf{88.78}    & 10 & ($\mathbf{a}_2, \mathbf{b}_2$) & 2500  & 0.1 \\ \hline
EMPH-Betti (CR)  & 86.34    & 10 & ($\mathbf{a}_3, \mathbf{b}_3$) & 100  & -- \\ \hline
EMPH-PI (CR)     & 87.80    & 10 & ($\mathbf{a}_4, \mathbf{b}_4$) & 100  & 1.0 \\ \hline
\end{tabular}

\caption{{\bf Fourier and EMPH methods:} 
Classification accuracy on the Proximal Phalanx Outline Age Group dataset using the best hyperparameters obtained through grid search. Here, $N$ is the preset maximum frequency, and Ray indicates the choice of direction vector and endpoint.}

\begin{eqnarray}
{\mathbf{a}_1}, {\mathbf{b}_1} &=& (1.4, 1.6, 1.4, 1.6, 1.7, 1.0, 1.8, 2.0, 1.5, 1.1), \quad (4.5, 3.1, 1.9, 3.6, 4.7, 2.8, 0.1, 0.7, 3.9, 0.2) \nonumber \\
{\mathbf{a}_2}, {\mathbf{b}_2} &=& (1.9, 1.3, 1.3, 1.2, 1.8, 1.2, 1.8, 1.7, 1.4, 1.5), \quad (4.5, 4.0, 0.8, 3.7, 2.0, 5.0, 1.1, 2.1, 4.7, 2.9) \nonumber \\
{\mathbf{a}_3}, {\mathbf{b}_3} &=& (1.2, 1.5, 1.1, 1.6, 1.1, 1.5, 1.7, 1.1, 1.9, 1.9), \quad (2.4, 3.6, 0.7, 0.9, 3.8, 1.9, 4.1, 4.7, 1.1, 3.3) \nonumber \\
{\mathbf{a}_4}, {\mathbf{b}_4} &=& (1.2, 1.7, 1.7, 1.7, 1.7, 1.1, 1.2, 1.3, 1.2, 1.1), \quad (1.8, 3.4, 3.1, 1.1, 4.3, 1.4, 4.6, 2.8, 4.8, 1.6) \nonumber
\end{eqnarray}        

\label{fig:Proximal_table12-3}
\end{table}


Figure~\ref{fig:Proximal PI bar} displays the CPU time (in milliseconds) for each method under the best hyperparameter settings. For each method, the reported value is the average CPU time over 10 runs required to compute up to the barcode. As shown in the figure, our proposed methods achieve significantly lower CPU times than the other methods. Note that the EMPH methods are evaluated under fixed rays. Consequently, as in our experiment, even when randomly exploring 100 rays $\ell$, a broad ray search is feasible without increasing the time burden relative to existing methods.
\begin{figure}[hbt!]
   \centering
  
        \includegraphics[width=0.8 \linewidth]{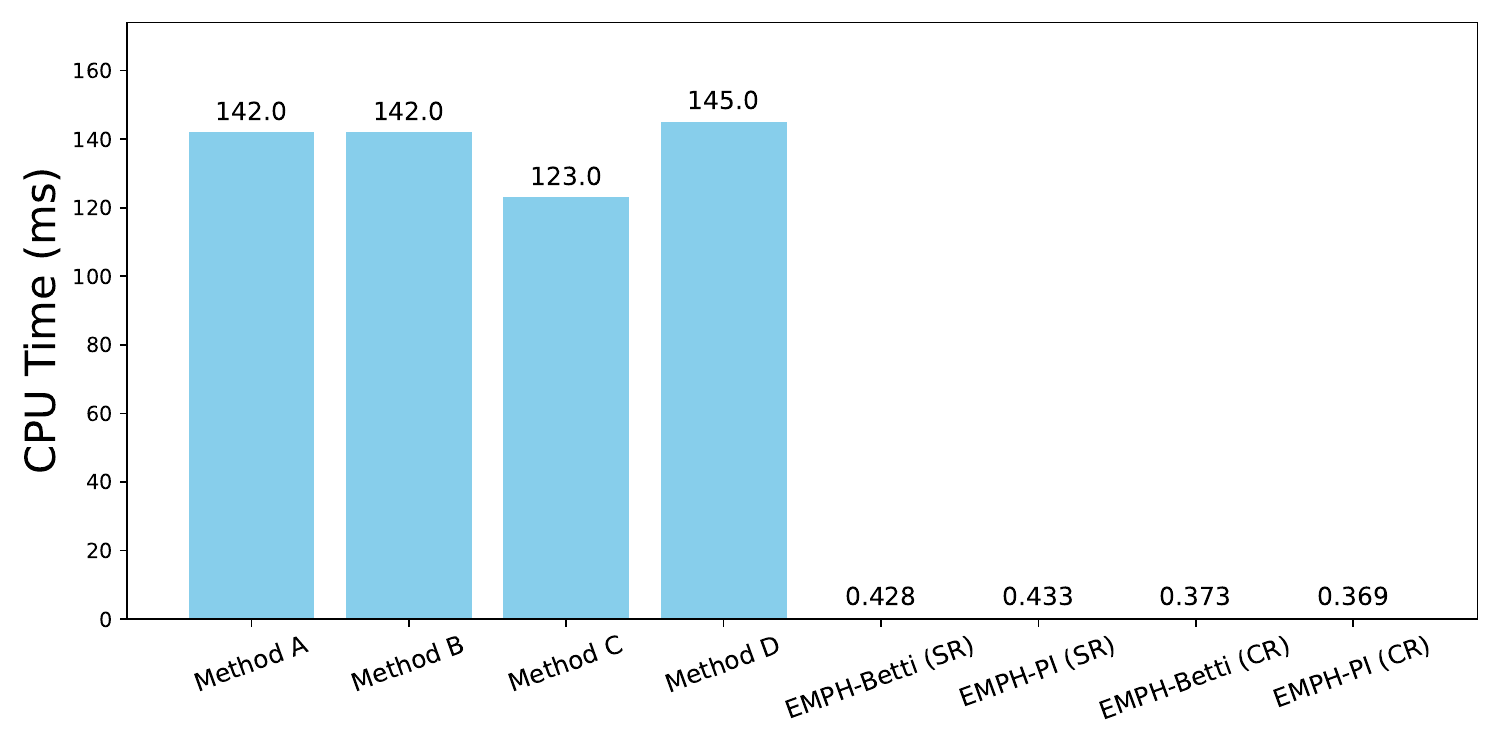}
    \caption{{\bf Average CPU time (ms)}: The average CPU time per computation for Methods A–D and the EMPH methods under the best hyperparameter settings. The average values were obtained by repeating the computation up to the barcode 10 times.}
    \label{fig:Proximal PI bar}
\end{figure}

\end{example}


\begin{example}{rs-fMRI data}
\label{example 4}
In this example, we use a resting state functional Magnetic Resonance Imaging (fMRI) dataset available in \url{https://github.com/laplcebeltrami/rsfMRI}, \cite{huang2020statistical}\footnote{We acknowledge that the fMRI dataset used in this research was provided by Dr. Moo K. Chung at the University of Wisconsin-Madison. We thank Dr. Chung for his valuable comments and suggestions on our proposed method.}. The dataset consists of time-series data collected from $100$ subjects across $6$ brain regions: the left and right orbital parts of the inferior frontal gyrus, the left and right hippocampus, and the left and right middle occipital regions. Each time-series has a length of $1200$, so the dataset has the format $(1200, 6, 100)$. Because of this length, computing persistent homology via the Vietoris–Rips complex with sliding window embedding becomes computationally very expensive. With a total of $600$ time-series, the computational cost is prohibitively high and cannot be handled within a reasonable time frame.

The purpose of this example is not to determine whether the six brain regions truly form distinct groups, but rather to provide a controlled experimental setting for comparing the performance of the two approaches. Since the fMRI dataset does not provide a predefined train/test split, we randomly partitioned the data into 80\% training and 20\% testing sets, and repeated the experiment five times with shuffled splits. Assuming that the fMRI data can be divided into six groups, Table~\ref{chung_original} presents the performance of the SWE-based and DTW methods. Table~\ref{chung_emph} shows the results of the Fourier baseline and the EMPH methods. The direction vectors and endpoints provided in Appendix \ref{appendix D} correspond to the choices that yielded the highest accuracy. As the results show, Method~D and EMPH-PI~(CR) achieved the best overall accuracy. Figures \ref{fig:Chung_SWE} and \ref{fig:Chung_EMPH} illustrate the persistence images obtained using the best parameters with Method D and EMPH-PI (CR), respectively. In each figure, the top row shows sample persistence images from six different brain regions of a single subject, and the bottom row presents the averaged persistence images computed over 100 subjects for each corresponding region.

\begin{table}[hbt!]
\centering
\begin{tabular}{|c||c|c|c|c|c|}
\hline
Method & Accuracy (\%) & Embedding dimension & Delay $\tau$ & Resolution & Bandwidth \\ \hline
Method A & 29.50 & 10 & 4 & 2500 & -- \\ \hline
Method B & 31.67 & 5 & 10 & 2500 & 0.1 \\ \hline
Method C & 30.67 & 20 & 2 & 2500 & -- \\ \hline
Method D & \textbf{32.50} & 10 & 3 & 2500 & 1.0 \\ \hline
DTW      & 10.67 & -- & -- & -- & -- \\ \hline
\end{tabular}
        \label{table-6}
            \caption{{\bf SWE-based and DTW methods:} 
Classification accuracy on the fMRI dataset using the best hyperparameters obtained through grid search (average over 5 experiments with randomly shuffled train/test splits).}

            \label{chung_original}
        \end{table}

         \begin{table}[hbt!]
\centering
\begin{tabular}{|c||c|c|c|c|c|}
\hline
Method & Accuracy (\%) & $N$ & Ray & Resolution & Bandwidth  \\ \hline
Fourier     & 23.83    & 60  & --  & -- & --\\ \hline
EMPH-Betti (SR)  & 29.00    & 100 & ($\mathbf{a}_1, \mathbf{b}_1$) & 2500  & -- \\ \hline
EMPH-PI (SR)  & 28.50    & 100 & ($\mathbf{a}_2, \mathbf{b}_2$) & 2500  & 0.1 \\ \hline
EMPH-Betti (CR)  & 29.50    & 100 & ($\mathbf{a}_3, \mathbf{b}_3$) & 100  & --  \\ \hline
EMPH-PI (CR)     & \textbf{32.50}   & 100  & ($\mathbf{a}_4, \mathbf{b}_4$) &  2500  & 1.0  \\ \hline
\end{tabular}
        \label{table-7}
            \caption{{\bf Fourier and EMPH methods:} 
Classification accuracy on the fMRI dataset using the best hyperparameters obtained through grid search (average over 5 experiments with randomly shuffled train/test splits).  Here, $N$ is the preset maximum frequency, and Ray indicates the choice of direction vector and endpoint.}

            \label{chung_emph}
        \end{table}

\begin{figure}[hbt!]
    \centering
    \includegraphics[width=0.81\linewidth]{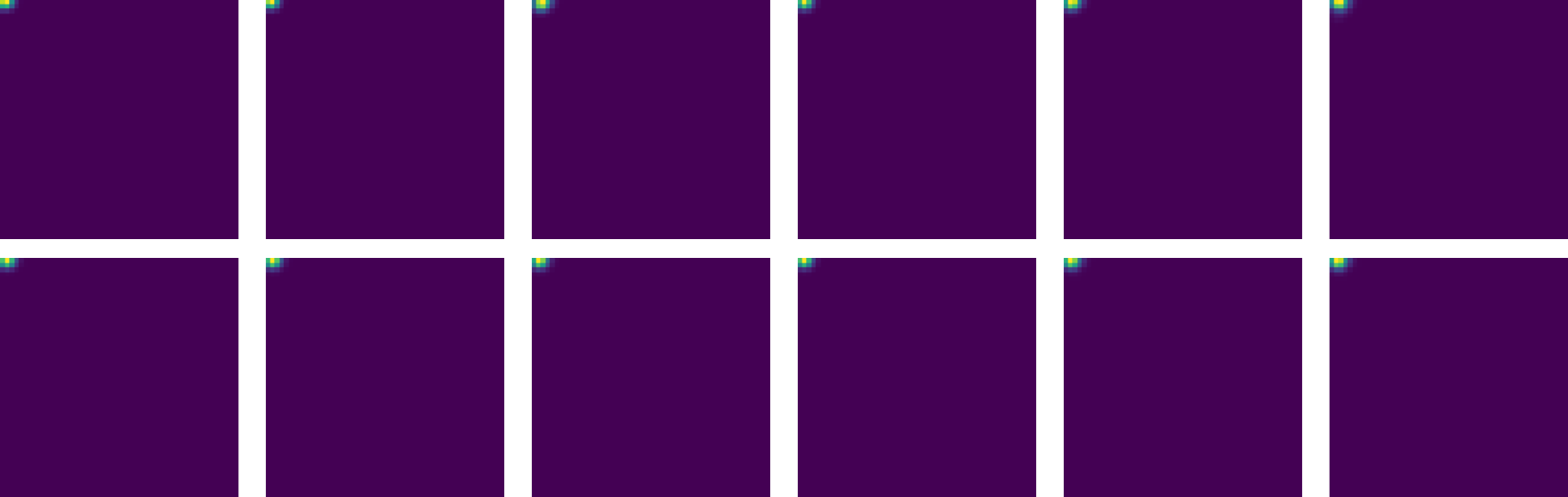}
    \caption{\textbf{Persistence images by Method D with best parameters:} The top row shows sample persistence images from six different brain regions of a single subject, and the bottom row displays the averaged persistence images computed over 100 subjects for each corresponding region.}
    \label{fig:Chung_SWE}
\end{figure}

\begin{figure}[hbt!]
    \centering
    \includegraphics[width=0.81\linewidth]{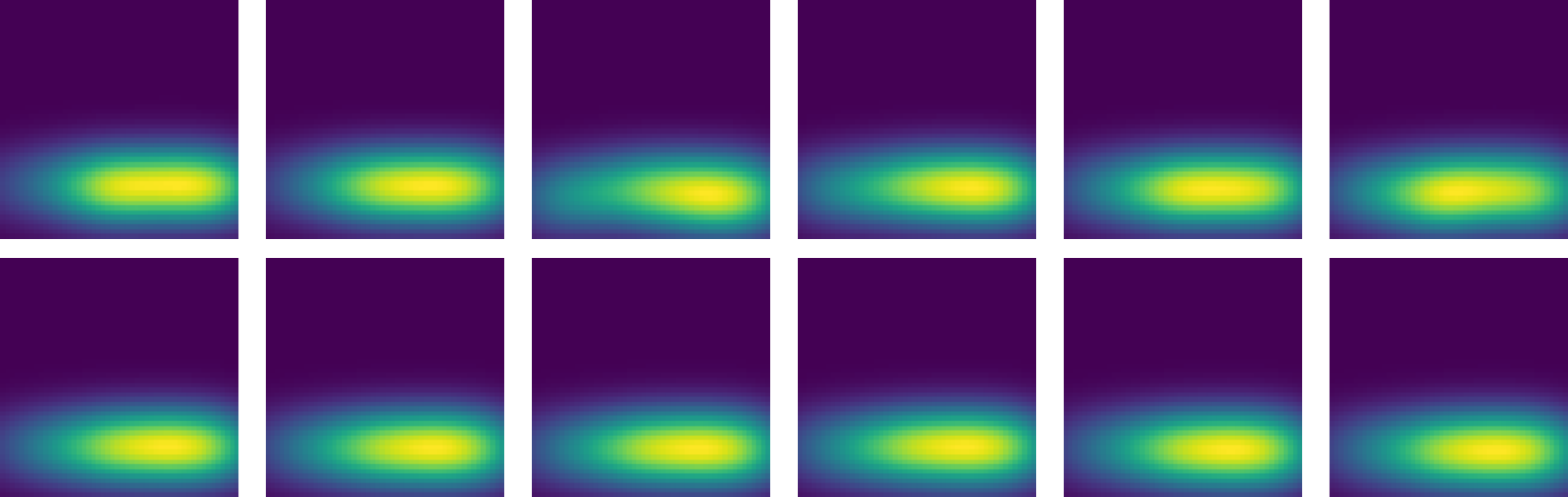}
    \caption{\textbf{Persistence images by EMPH-PI (CR) with best parameters:} The top row shows sample persistence images from six different brain regions of a single subject, and the bottom row displays the averaged persistence images computed over 100 subjects for each corresponding region.}
    \label{fig:Chung_EMPH}
\end{figure}

Figure~\ref{fig:Chung_PI_bar} displays the CPU time (in seconds) for each method under the best hyperparameter settings. For each method, the reported value is the average CPU time over 10 runs required to compute up to the barcode. As shown in the figure, our proposed methods achieve significantly lower CPU times than the other methods. Note that the EMPH methods are evaluated under fixed rays. Consequently, as in our experiment, even when randomly exploring 500 rays $\ell$, a broad ray search is feasible without increasing the time burden relative to existing methods. In particular, compared with Figure~\ref{fig:Proximal PI bar}, the gap in computational time between EMPH and the SWE-based methods becomes even larger, suggesting that the EMPH approach scales more efficiently as the length of the time-series increases.
 
\begin{figure}[hbt!]
    \centering
    \includegraphics[scale=0.6]{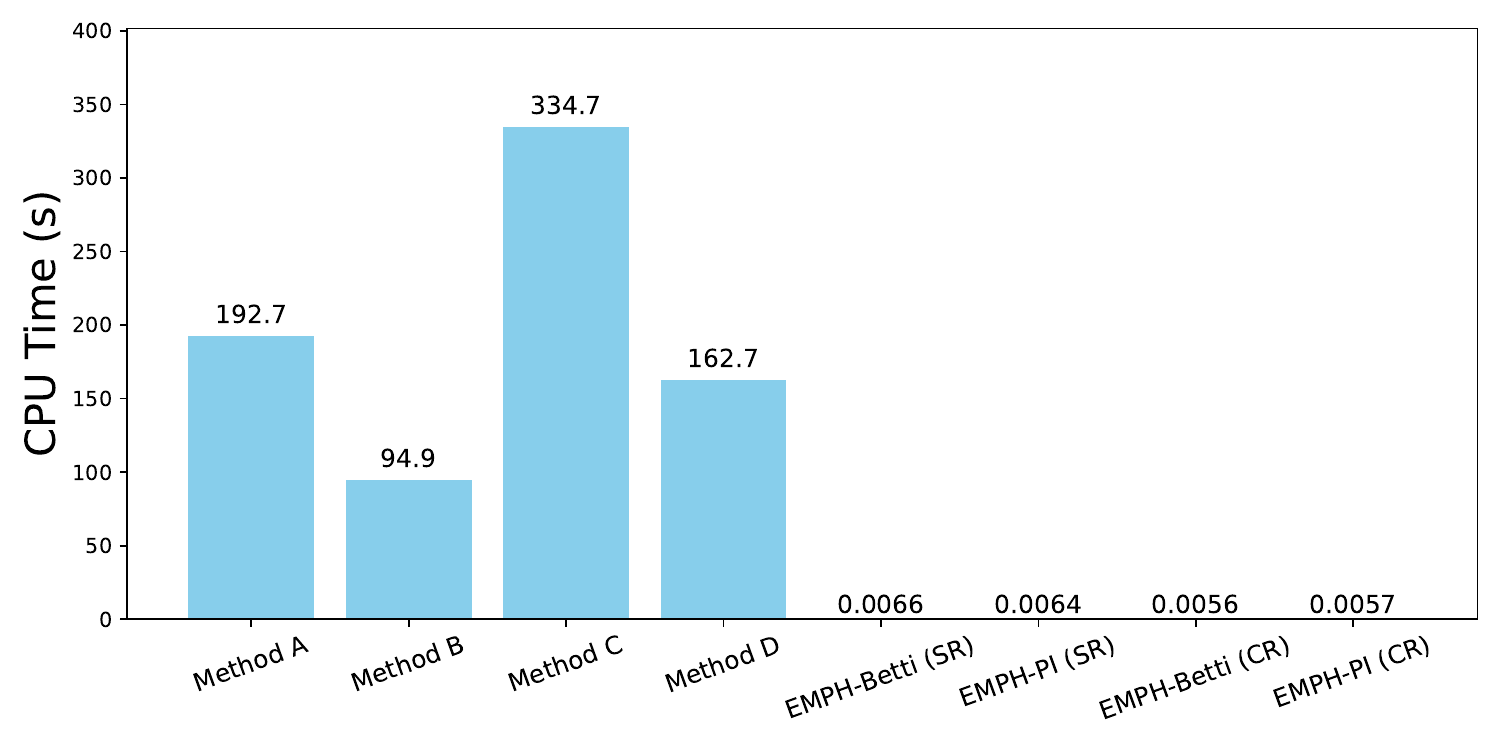}
\caption{{\bf Average CPU time (s)}: The average CPU time per computation for Methods A–D and the EMPH methods under the best hyperparameter settings. The average values were obtained by repeating the computation up to the barcode 10 times.}

    \label{fig:Chung_PI_bar}
\end{figure}

\end{example}
\end{section}

\section{Conclusion} 
\label{conclusion} 

In this work, we proposed the Exact Multi-parameter Persistent Homology (EMPH) method for time-series data analysis.
Assuming, as in Takens’ embedding theorem, that a time-series represents observations of an underlying dynamical system, we modeled this system as a Hamiltonian system of uncoupled one-dimensional harmonic oscillators.
Under this setting, one can consider the Liouville torus as the fundamental object of the Hamiltonian dynamics, and the computation of the persistent homology of the Vietoris–Rips complex constructed on this torus can be interpreted from the viewpoint of Fourier analysis.

EMPH provides a closed-form expression for the fibered barcode, which is an invariant of multi-parameter persistent homology.
Here, the fibered barcode is an invariant obtained by restricting the multi-parameter persistent homology along a specific ray, thereby reducing it to a one-parameter persistent homology.
This expression reveals that the choice of a ray corresponds to the weighting of the Fourier modes, establishing a direct connection between the fibered barcode and the spectral composition.
Varying the ray changes the mode weighting and generates a family of filtrations that reveal different topological features of the data.
This framework enables both efficient computation and variable topological inferences, providing a new bridge between Fourier analysis and multi-parameter persistent homology.

The proposed method is highly advantageous in that the computational cost of the proposed method is very low
as $O(T\log T) + O\left(N \times \binom{N+n-1}{n}\right)$ for $\mathsf{bcd}_{n}^{\Rip,\ell}(\Psi_{f})$. However, the usual one-parameter persistent homology method for time-series data through sliding window embedding and Vietoris-Rips complex on the high dimensional embedding space is highly costly  and in many cases is not usable especially when the size of time-series data is large.  As shown in the numerical results presented in this paper, the proposed EMPH method is comparable or superior in terms of accuracy compared to the one-parameter persistent homology based on sliding window embedding and Vietoris–Rips complex, and it is particularly outstanding in terms of computational cost. Due to its very low computational complexity, it is doable to generate as many ray vectors on the filtration space as desired.

In the paper, we showed that the result with the standard ray vector (the diagonal ray vector) is equivalent to the result by the one-parameter persistent homology of the Liouville torus. By having different ray vectors, one may have different topological inferences. For example, Example \ref{example 2} revealed hidden subclusters in the clustering problem and quantitatively analyzed their differences. In Example \ref{example 3}, the choice of an appropriate ray led to the best performance, while in Example \ref{example 4}, the collection of rays associated with a specific ray achieved the highest classification accuracy. These results indicate that not only the selection of an optimal ray but also the consideration of multiple rays deserves further investigation. Moreover, the proposed method provides a highly efficient way to compute persistent homology on a curved filtration. While calculating persistent homology on a curved filtration is generally difficult or even infeasible, our exact method makes this possible by approximating the curved filtration with line segments.

From the classification of the homotopy type of the Vietoris-Rips complex of a circle, we deduced the exact barcode formula. Our method motivates us to study the homotopy type of the Vietoris-Rips complex of other compact Riemannian manifolds. Meanwhile, we considered a sliding window embedding that translates a sinusoidal function into a circle, and we believe that exploring the basic functions and transformations that map to an $n$-sphere is also meaningful.

The subjects listed below represent potential research agendas aligning with the proposed method, which we will investigate in our future research.

\begin{enumerate}
    \item[$\bullet$] 
    Application of the proposed EMPH method to time-series data from real-world problems and further validation of its effectiveness. 

    \item[$\bullet$] 
    Extension of the proposed method on a curved filtration in the multi-parameter filtration space. 
    \item[$\bullet$] 
    Reduction of the computational complexity of $O\left(N \times \binom{N+n-1}{n}\right)$ by selecting proper frequencies and reducing the embedding dimension of $N$. 
    \sloppy 
    \item[$\bullet$] 
    Extension of the EMPH method to other Hamiltonian systems that are associated with other bases such as spherical harmonics rather than Fourier bases. 
    \item[$\bullet$]
    Filtration learning (e.g. \cite{hofer2020graph, bruel2020topology, leygonie2021framework, carriere2021optimizing})  for the optimal selection of rays or curves on the filtration space with the EMPH. 
\end{enumerate}

\vskip .1in 
{\bf Acknowledgement}
The authors thank Dr. Moo K. Chung for providing the fMRI dataset and his valuable comments and suggestions on the current work. This research was supported by National Research Foundation under the grant number 2021R1A2C3009648 and  also supported partially by  POSTECH Basic Science Research Institute under the grant number 2021R1A6A1A10042944 and the research grant from the NRF to the Center for the Gravitational-Wave Universe under the Grant Number 2021M3F7A1082053. This research was also partially supported by JST Moonshot R\&D under the Grant Number JPMJMS2021.

\titleformat{\section}
  {\normalfont\Large\bfseries}
  {Appendix \thesection.}
  {1em}
  {}
\begin{appendices}
\renewcommand{\thesection}{\Alph{section}}
\section{Notations} 

\begin{enumerate}
\item T : The length of time-series data, p.\pageref{not:length}

 \item $\Rip(X)$ : Vietoris-Rips complex of the metric space $X$, p.\pageref{Vietoris-Rips complex}.

    \item $\bcd_{n}^{\Rip}(X)$ : $n$-dimenisonal barcode of $\Rip(SW_{M,\tau}(f))$, p.\pageref{not:barcode}.

    \item bar : Element of barcode, p.\pageref{not:barcode}.
    
    \item
    $d_{B}$ : Bottleneck distance, p.\pageref{bottleneck distance}.
    
    \item $\mathbb{S}^{1}$ : Unit circle equipped with Euclidean metric, p.\pageref{barcode of circle}.

 \item $\mathbb{T}$ : $\mathbb{R} \big/ 2\pi\mathbb{Z}$, Domain of continuous time-series data, p.\pageref{slidingwindow}.

    \item $SW_{M,\tau}$ : Sliding window embedding, p.\pageref{slidingwindow}.

    \item $N$ : Truncation order of Fourier series. p.\pageref{meaning of N}

    \item $P_L$ : $L$-plane, p.\pageref{roundness}.
    
    \item $r_{L}^{f}$ : $2	\left| \hat{f}(L) \right|$ where $\hat{f}(L)$ is the $L$th Fourier coefficient of $f$, p.\pageref{N-torus}.
    
    \item $\psi_{f,N}, \psi_{f}$ : $\sqrt{{2 \over M+1}} C(SW_{M,\tau}S_N f (t))$, preprocessed point cloud , p.\pageref{N-torus}.

    \item $\Psi_{f,N}, \Psi_f$ : Liouville torus of time-series data $f$, p.\pageref{def:Liouville torus}.
    
    \item $\pi_{i_1 i_2 \cdots i_{N}} : \mathbb{R}^{M+1} \rightarrow P_{i_1} \oplus \cdots \oplus P_{i_{N}}$ : projection map p.$\pageref{projection map}$

    \item $d_{GH}$ : Gromov-Hausdorff distance p.\pageref{Gromov-Hausdorff distance}.

    \item $\Rip^{\ell}_{t} \left(\textcolor{black}{\Psi_{f}}\right)$ : One parameter reduction of multi-parameter persistent homology, p.\pageref{one parameter reduction of multi-parameter persistent homology}.
    
    \item $\mathsf{bcd}_{n}^{\mathcal{R},\ell}(\Psi_{f})$ : Exact Multi-parameter Persistent Homology (EMPH) along $\ell$, p.\pageref{one parameter reduction of multi-parameter persistent homology}.

    \item $\mathsf{bcd}_{n}^{\mathcal{R},\mathcal{L}_{\ell}}(\textcolor{black}{\Psi_{f}})$ : EMPH on the collection of rays $\mathcal{L}_{\ell}$ associated with $\ell$, p.\pageref{thm:collection of ray}.

     \item $\phi_H^t$ : Hamiltonian flow, p.\pageref{def:Hamiltonian flow}.

\end{enumerate}

\section{Elementary sympletic manifold theory}
\label{appendix:symplectic}
In this appendix, we will discuss basic Hamiltonian systems to better understand the meaning of the Liouville torus. For more details and reference, please see \cite{da2008lectures}.
\begin{definition}
A symplectic manifold is a pair $(M, \omega)$ where $M$ is a smooth manifold and $\omega$ is a non-degenerate closed two-form. A symplectic manifold corresponds to a phase space in classical mechanics.    
\end{definition}

\begin{proposition}[Darboux]
For a symplectic manifold $(M,\omega)$, there is a local chart $(q_1, \cdots, q_n, p_1, \cdots, p_n)$ such that 
$$\omega = \sum\limits_{i=1}^{n} dq_i \wedge dp_i.$$     
This local chart is called the canonical coordinates on $(M,\omega)$. This proposition tells us that every symplectic manifold is locally isomorphic.
\end{proposition}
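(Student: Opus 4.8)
The plan is to prove this by Moser's deformation argument (the ``Moser trick''), which reduces the statement to the pointwise symplectic linear-algebra normal form together with an ODE argument. First I would fix $p\in M$ and apply symplectic linear algebra to the single vector space $(T_pM,\omega_p)$: since $\omega_p$ is a nondegenerate antisymmetric bilinear form, there is a basis in which it has the standard form, and extending this basis to a coordinate chart centered at $p$ produces local coordinates $(q_1,\dots,q_n,p_1,\dots,p_n)$ in which, \emph{at the point $p$}, $\omega$ agrees with the constant-coefficient form $\omega_0:=\sum_{i=1}^n dq_i\wedge dp_i$. Setting $\omega_1:=\omega$, on a small ball $U$ around $p$ I would then consider the family $\omega_t:=\omega_0+t(\omega_1-\omega_0)$ for $t\in[0,1]$. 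Each $\omega_t$ is closed because $\omega_0$ and $\omega_1$ are, and since $\omega_t|_p=\omega_0|_p$ is nondegenerate and nondegeneracy is an open condition, after shrinking $U$ every $\omega_t$ is symplectic on $U$.

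Next, because $\omega_1-\omega_0$ is closed and $U$ is contractible, the Poincar\'e lemma gives a $1$-form $\sigma$ with $\omega_1-\omega_0=d\sigma$; subtracting an exact constant-coefficient correction I can further arrange $\sigma_p=0$. I would then define the time-dependent vector field $X_t$ on $U$ by $\iota_{X_t}\omega_t=-\sigma$, which is uniquely solvable since $\omega_t$ is nondegenerate, and note that $X_t(p)=0$. Let $\psi_t$ be the flow of $X_t$ with $\psi_0=\mathrm{id}$; since $p$ is a zero of $X_t$, on a (possibly smaller) neighborhood of $p$ this flow exists for all $t\in[0,1]$. Cartan's formula then yields $\frac{d}{dt}\psi_t^*\omega_t=\psi_t^*\!\left(\mathcal{L}_{X_t}\omega_t+\tfrac{d}{dt}\omega_t\right)=\psi_t^*\!\left(d\iota_{X_t}\omega_t+(\omega_1-\omega_0)\right)=\psi_t^*(-d\sigma+d\sigma)=0$, so $\psi_1^*\omega_1=\psi_0^*\omega_0=\omega_0$. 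Thus in the coordinates obtained by composing $(q_i,p_i)$ with $\psi_1$ we get $\omega=\sum_i dq_i\wedge dp_i$, which is the claim, and the final assertion that symplectic manifolds are locally isomorphic is then immediate.

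The main obstacle I anticipate is not any individual computation but the bookkeeping of domains: one must shrink $U$ repeatedly — to ensure nondegeneracy of all $\omega_t$, to apply the Poincar\'e lemma on a contractible set, and to guarantee that the flow $\psi_t$ is defined on the entire interval $[0,1]$ — and this last step genuinely relies on the normalization $\sigma_p=0$, since without $X_t(p)=0$ one only obtains a short-time flow and the argument breaks. A classical alternative is the inductive proof that constructs the conjugate pairs $(q_i,p_i)$ one at a time using Hamiltonian flows and the Frobenius theorem, but the Moser argument above is shorter and is the one I would present.
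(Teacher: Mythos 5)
Your Moser-trick argument is correct and complete: the pointwise symplectic linear-algebra normal form at $p$, the openness-plus-compactness argument giving nondegeneracy of the whole family $\omega_t$ on a shrunken neighborhood, the normalization $\sigma_p=0$ (which is what makes $X_t(p)=0$ and hence the flow defined up to time $1$ near $p$), and the Cartan-formula computation $\frac{d}{dt}\psi_t^*\omega_t=0$ are all in order. The paper states Darboux's theorem without proof, deferring to \cite{da2008lectures}, and the proof given there is exactly this local Moser deformation argument, so your route coincides with the standard one the paper relies on.
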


\begin{definition}[Hamiltonian vector field]
For a smooth function (usually referred to as the Hamiltonian) $H : M \rightarrow \mathbb{R}$, the Hamiltonian vector field $X_H$ is defined by the equation $dH = X_H \righthalfcup \omega.$ The well-definedness of $X_H$ follows from $\omega$ being non-degenerate. Recall that the interior multiplication is defined by $v \righthalfcup \omega = \omega(v, \cdot)$.
\end{definition}

\begin{definition}[Hamiltonian flow]
\label{def:Hamiltonian flow}
We call the flow of the Hamiltonian vector field $X_H$ the Hamiltonian flow, denoted by $\phi_H^t$. That is,  $\phi_H^t$ satisfies
$$
\begin{cases}
\phi_H^0 = id_M \\
{d\phi_H^t \over dt} = X_H \circ \phi_H^t
\end{cases}.$$
\end{definition}

\begin{remark} Note that if $H$ has the compact support, then $\phi_H^t$ is defined for every $t \in \mathbb{R}$. cf) Theorem 9.16. in \cite{lee2012smooth}.
    
\end{remark}

\begin{definition}[Poisson bracket]
For $f,g \in C^{\infty}(M,\mathbb{R})$,
$$\left\{ f,g \right\} := \omega(X_f, X_g)$$
is called Poisson bracket of $f$ and $g$.
\end{definition}

\begin{theorem}
\label{thm:conservity}
$\left\{ f,H \right\} = 0$ if and only if $f$ is constant along integral curves of $X_H$. More precisely, ${d \over dt}(f \circ \phi_H^t) = \left\{f,H \right\} \circ \phi_H^t = 0$.    
\end{theorem}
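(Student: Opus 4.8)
The plan is to compute the time derivative ${d \over dt}(f \circ \phi_H^t)$ directly, identify it with $\{f,H\} \circ \phi_H^t$, and then read off the equivalence. First I would fix $p \in M$ and let $\gamma(t) = \phi_H^t(p)$ be the integral curve of $X_H$ through $p$, defined on its maximal interval of existence (if $H$ does not have compact support the flow need not be complete, but this does not affect the argument — one only has to read ``constant along integral curves'' as constant on each maximal integral curve). By the chain rule and the defining ODE ${d\phi_H^t \over dt} = X_H \circ \phi_H^t$ of the Hamiltonian flow (Definition \ref{def:Hamiltonian flow}),
\begin{equation*}
{d \over dt}\bigl(f \circ \phi_H^t\bigr)(p) \;=\; df_{\gamma(t)}\bigl(\gamma'(t)\bigr) \;=\; df_{\gamma(t)}\bigl(X_H(\gamma(t))\bigr).
\end{equation*}

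Next I would simply unwind the definitions of the Hamiltonian vector field and the Poisson bracket. Since $df = X_f \righthalfcup \omega = \omega(X_f, \cdot)$, for any point $q$ we have $df_q\bigl(X_H(q)\bigr) = \omega_q\bigl(X_f(q), X_H(q)\bigr) = \{f,H\}(q)$ by the definition of $\{\cdot,\cdot\}$. Taking $q = \gamma(t) = \phi_H^t(p)$ and combining with the previous display yields the pointwise identity, and as $p$ was arbitrary,
\begin{equation*}
{d \over dt}\bigl(f \circ \phi_H^t\bigr) \;=\; \{f,H\} \circ \phi_H^t .
\end{equation*}
From here the equivalence is immediate: if $\{f,H\} \equiv 0$ then the right-hand side vanishes identically, so $t \mapsto f(\phi_H^t(p))$ has zero derivative and hence is constant for every $p$, i.e. $f$ is constant along the integral curves of $X_H$; conversely, if $f$ is constant along every integral curve, then for each $p$ the function $t \mapsto f(\phi_H^t(p))$ is constant, so its derivative at $t=0$ is $0$, and since $\phi_H^0 = id_M$ the identity at $t=0$ gives $\{f,H\}(p) = 0$ for all $p$.

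There is no genuinely hard step: the argument is a chain-rule computation followed by unwinding the definitions of $X_f$, $X_H$, and $\{\cdot,\cdot\}$. The only points requiring mild care are checking that differentiating $f \circ \phi_H^t$ in $t$ is legitimate (smooth dependence of the flow on time) and not tacitly assuming completeness of $\phi_H^t$, which is why I would phrase the geometric conclusion in terms of maximal integral curves.
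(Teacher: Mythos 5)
Your proof is correct. The paper itself states Theorem \ref{thm:conservity} without proof, treating it as a standard fact from the cited lecture notes on symplectic geometry, so there is no in-paper argument to compare against; your chain-rule computation combined with unwinding the definitions $df = X_f \righthalfcup \omega$ and $\{f,H\} = \omega(X_f, X_H)$ is exactly the standard derivation, and your sign conventions match the paper's. The remarks about maximal integral curves and evaluating at $t=0$ via $\phi_H^0 = id_M$ for the converse are appropriate and complete the equivalence cleanly.
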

    
\begin{example}[Hamiltonian equations]
For canonical coordinates on $(M,\omega)$, the Hamilton equations are given by
$$\begin{cases}
\dot{q_{i}}={\partial H \over \partial p_{i}} \\ \dot{p_{i}}=-{\partial H \over \partial q_{i}}    
\end{cases}$$
\begin{proof}
Let $X_{H} = \sum\limits_{i} {a_{i} {\partial \over \partial q_{i}} + b_{i} {\partial \over \partial p_{i}}}$. 

\begin{equation*}
X_{H} \righthalfcup \omega = \sum\limits_{i} X_{H} \righthalfcup (dq_{i} \wedge dp_{i}) = \sum _{i} (X_{H} \righthalfcup dq_{i}) \wedge dp_{i} - dq_{i} \wedge (X_{H} \righthalfcup dp_{i}) = \sum _{i} {a_{i} dp_{i} - b_{i} dq_{i}} \end{equation*}
\begin{equation*}
dH = \sum _{i}  {{\partial H \over \partial q_{i}} dq_{i} + {\partial H \over \partial p_{i}}dp_{i}}.
\end{equation*} Therefore $a_i = {\partial H \over \partial p_{i}}$ and $b_i = -{\partial H \over \partial q_{i}}$ imply $X_H = \sum\limits_{i}{{\partial H \over \partial p_{i}} {\partial \over \partial q_{i}} -{\partial H \over \partial q_{i}} {\partial \over \partial p_{i}}}$.
\end{proof}
\end{example}

\section{Elementary calculation of one-dimensional barcode of Vietoris-Rips complex of $\mathbb{S}^{1}$}
In \cite{adamaszek2017vietoris}  barcode formula of the Vietoris-Rips complex of a unit circle equipped with Euclidean metric $\mathbb{S}^{1}$ was suggeated. As mentioned in Section \ref{sec:Defintions}, cyclic graph $\vec{G}$ and its invariant winding fraction $wf(\vec{G})$ are used. As we saw in Theorem \ref{barcode of circle}, the Vietoris-Rips complex gives us redundant homology, that is, even if one-dimensional manifold $\mathbb{S}^1$, we can capture higher dimensional homology via persistent homology. Even though we give only a one-dimensional barcode formula, our proof is  elementary and good enough from the manifold inference perspective.
\begin{lemma}
\label{hexagon}
The birth time of the barcode of the Vietoris-Rips complex composed of vertices of regular hexagon is the length of the side of regular hexagon and the death time is the length of the shortest diagonal line.
\end{lemma}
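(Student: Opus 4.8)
The plan is to compute the Vietoris--Rips filtration on the six vertices by hand, stratifying $\epsilon$ according to the (only) three distinct pairwise distances, and then read off $H_1$ in each stratum. Label the vertices $v_0,\dots,v_5$ cyclically and let $s$ denote the side length. The pairwise distances take exactly the values $d(v_i,v_{i\pm1})=s$ on the six sides, $d(v_i,v_{i\pm2})=s\sqrt3$ on the six short diagonals, and $d(v_i,v_{i+3})=2s$ on the three long diagonals, so there are four regimes: $\epsilon\le s$, $s<\epsilon\le s\sqrt3$, $s\sqrt3<\epsilon\le 2s$, and $\epsilon>2s$. In each regime I would identify $\mathcal{R}_\epsilon$ with the clique (flag) complex of the corresponding ``distance $<\epsilon$'' graph, which is legitimate because the Vietoris--Rips complex is always a clique complex.

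For $\epsilon\le s$ there are no edges, so $\mathcal{R}_\epsilon$ is six points and $H_1=0$. For $s<\epsilon\le s\sqrt3$ the $1$-skeleton is the $6$-cycle $C_6$; any three of its vertices contain a non-adjacent pair, so there are no $2$-simplices and $\mathcal{R}_\epsilon\simeq\mathbb{S}^1$, giving $H_1$ of rank one. For $s\sqrt3<\epsilon\le 2s$ the $1$-skeleton joins each vertex to all others except its antipode, i.e.\ it is the octahedral graph $K_{2,2,2}$; any four of its six vertices contain an antipodal pair, so there is no $K_4$ and hence no simplex of dimension $\ge3$, the maximal simplices are exactly the eight ``rainbow'' triangles, and $\mathcal{R}_\epsilon$ is the boundary of the octahedron, homotopy equivalent to $S^2$, so $H_1=0$. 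Finally for $\epsilon>2s$ the complex is the full simplex $\Delta^5$, which is contractible. Thus $H_1(\mathcal{R}_\epsilon)$ has rank $1$ precisely for $s<\epsilon\le s\sqrt3$ and rank $0$ otherwise; since the module is pointwise at most one-dimensional and changes rank only at $s$ and $s\sqrt3$, it is the interval module on $(s,s\sqrt3]$. Hence the $1$-dimensional barcode is the single bar $(s,s\sqrt3]$, i.e.\ it is born at the side length and dies at the length of the shortest diagonal, as claimed; the half-open shape $(s,s\sqrt3]$ is exactly what the strict inequality in Definition~\ref{Vietoris-Rips complex} produces.

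The one genuinely non-routine point is the transition across $\epsilon=s\sqrt3$: one must check that adjoining the six short diagonals does kill the hexagonal $1$-cycle, and this rests on correctly identifying the resulting flag complex as the $2$-sphere rather than as something still carrying $H_1$. The clean way to see it is the pigeonhole observation that any four of the six vertices include an antipodal (non-adjacent) pair, which forbids $K_4$, forces dimension $\le 2$, and leaves exactly the octahedral $2$-complex. The remaining steps are straightforward clique-complex bookkeeping, and the passage from the list of ranks to the barcode is immediate from the classification of pointwise finite-dimensional one-parameter persistence modules.
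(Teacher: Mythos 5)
Your proof is correct, and it reaches the conclusion by a different route than the paper. The paper's argument is a direct chain-level computation: it exhibits the hexagonal $1$-cycle $[0,1]+[1,2]+\cdots+[5,0]$ at the side length, observes that no $2$-simplex exists until the short diagonals enter, and then kills the cycle at $\epsilon=s\sqrt{3}$ by writing it explicitly as $\partial\bigl([0,1,2]+[2,3,4]+[4,5,0]+[0,2,4]\bigr)$. You instead stratify the filtration by the three distinct pairwise distances and identify the homotopy type of the clique complex in each regime (discrete set, $C_6\simeq\mathbb{S}^1$, octahedron boundary $\simeq S^2$ via the pigeonhole argument excluding $K_4$, full simplex). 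Your version buys more: it verifies that $H_1$ has rank exactly one on $(s,s\sqrt{3}]$ and rank zero elsewhere, so the $1$-dimensional barcode consists of a single bar --- a point the paper's proof leaves implicit, since exhibiting one bounding chain does not by itself rule out other generators of $H_1$ appearing later. The paper's version is shorter and, more importantly for its subsequent use, produces the explicit bounding $2$-chain that is reused verbatim in the proof of Theorem C.2 for the regular $6k$-gon, where the analogous octahedral identification is less immediate. Both arguments are sound; your identification of the intermediate complex as $K_{2,2,2}$'s clique complex is the right key observation for the homotopy-theoretic route, and your handling of the half-open interval convention coming from the strict inequality in Definition 2.1 is correct.
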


\begin{proof}
In Figure \ref{hexagon_picture}, set $k=1$ to help to prove this theorem. Clearly, the birth time is the length of the side of regular hexagon and corresponding cycle is given by $[0,1]+[1,2]+[2,3]+[3,4]+[4,5]+[5,0]$. And this cycle is alive up to the length of the shortest diagonal line of hexagon since there is no $2$-simplex. For the length of the shortest diagonal line, $[0,1]+[1,2]+[2,3]+[3,4]+[4,5]+[5,0]$ is the boundary of $[0,1,2]+[2,3,4]+[4,5,0]+[0,2,4]$.
\end{proof}

\begin{theorem}
\label{6k barcode}
Let $P_T$ be a regular $T$-polygon with $T=6k$ $(k\in\mathbb{Z}_{>0})$, whose side length is $1$. Then the $0$- and $1$-dimensional barcodes of the Vietoris--Rips complex of $P_T$ are given by
\begin{eqnarray} \mathsf{bcd}_{0}^{\Rip}\left( P_T \right) &=& \left\{(0, \infty), \left(0, 2\sin {\pi \over T}\right]_{(T-1)} \right\}\nonumber \\ \mathsf{bcd}_{1}^{\Rip}\left( P_T \right) &=& \left\{\left(2\sin {\pi \over T}, \sqrt{3} \right] \right\}. \nonumber \end{eqnarray}
\end{theorem}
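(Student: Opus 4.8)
The plan is to compute $\mathcal{R}_\epsilon(P_n)$ explicitly as a combinatorial object and read off both barcodes directly, bypassing the winding-fraction machinery. Label the vertices of $P_n$ cyclically by $\mathbb{Z}/n\mathbb{Z}$, placed on the unit circle, so that $d(i,j)=2\sin\!\big(\tfrac{\pi}{n}\,\delta(i,j)\big)$ where $\delta(i,j)=\min(|i-j|,\,n-|i-j|)$ is the cyclic distance. Then, for each scale $\epsilon$, $\mathcal{R}_\epsilon(P_n)$ equals the clique complex $X_n^m$ of the circulant graph in which $i\sim j$ iff $\delta(i,j)\le m$, where $m=m(\epsilon)$ is the largest integer with $2\sin(\tfrac{\pi m}{n})<\epsilon$; the complex changes only at the values $\epsilon=2\sin(\tfrac{\pi j}{n})$, $1\le j\le n/2$. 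Since $n=6k$, the threshold $2\sin(\tfrac{2\pi k}{n})$ equals $2\sin\tfrac{\pi}{3}=\sqrt3$, and one checks $m(\epsilon)=1$ exactly for $2\sin\tfrac{\pi}{n}<\epsilon\le 2\sin\tfrac{2\pi}{n}$ and $m(\epsilon)\ge 2k$ exactly for $\epsilon>\sqrt3$. The $0$-dimensional statement is then immediate: $X_n^0$ consists of $n$ isolated points while $X_n^m$ is connected for every $m\ge1$ (it contains the $n$-cycle), so $n-1$ components die at $\epsilon=2\sin\tfrac{\pi}{n}$ and one survives forever.

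For $H_1$ I would first isolate a reduction valid in every $X_n^m$: whenever $j\le m$ the triple $\{i,i+1,i+j\}$ is a $2$-simplex, so modulo $B_1$ each edge $\{i,i+j\}$ is homologous rel endpoints to the length-one path $\sum_{t=0}^{j-1}\{i+t,i+t+1\}$; hence $C_1(X_n^m)=B_1+\langle\text{length-one edges}\rangle$. A one-line computation on the $n$-cycle graph gives $Z_1\cap\langle\text{length-one edges}\rangle=\Bbbk\,z$ with $z=\sum_i\{i,i+1\}$ the fundamental cycle. Consequently $H_1(X_n^m)$ is spanned by $[z]$, so $\dim H_1(X_n^m)\le1$ for every $m$, and $H_1(X_n^m)=0$ if and only if $z\in B_1(X_n^m)$. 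It therefore suffices to pin down exactly the range of $m$ in which $z$ bounds.

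For $m\le2k-1$ (that is, $2\sin\tfrac{\pi}{n}<\epsilon\le\sqrt3$) I would show $[z]\neq0$ by exhibiting a $1$-cocycle pairing nontrivially with it: fix a basepoint $x_0$ on the circle away from the vertices, orient each edge along its \emph{short} circular arc (well defined since $m<n/2$), and let $\phi(\{i,j\})$ be the signed number of times that arc crosses $x_0$. The geometric heart of the argument is that when $m<n/3$ every triangle of $X_n^m$ has its three vertices inside an arc spanning at most $m$ steps, so for each triangle $\sigma$ the three short arcs compose consistently and $\phi(\partial\sigma)=0$; thus $\phi$ is a cocycle, while $\phi(z)=\pm1$. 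Hence $H_1(X_n^m)\cong\Bbbk$, generated by $[z]$, throughout this range. For $m\ge2k$ (that is, $\epsilon>\sqrt3$) I would generalize the hexagon computation of Lemma~\ref{hexagon}: since $2k\le m$, the fans $F_j=\sum_{t=1}^{2k-1}\{j,j+t,j+t+1\}$ and the central triangle $\{0,2k,4k\}$ are chains of $X_n^m$, and a short calculation gives $z=\partial\big(F_0+F_{2k}+F_{4k}+\{0,2k,4k\}\big)$; so $z\in B_1$ and $H_1(X_n^m)=0$. Putting the pieces together, $H_1(\mathcal{R}_\epsilon(P_n))$ vanishes for $\epsilon\le2\sin\tfrac{\pi}{n}$ and for $\epsilon>\sqrt3$, and is one-dimensional with generator $[z]$ compatibly under the inclusion maps for $2\sin\tfrac{\pi}{n}<\epsilon\le\sqrt3$, which is precisely the single bar $\big(2\sin\tfrac{\pi}{n},\sqrt3\,\big]$.

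I expect the main obstacle to be the step $[z]\neq0$: making the short-arc orientation and the crossing-count cochain genuinely well defined and proving the no-wraparound fact that triangles of $X_n^m$ stay inside an arc of $\le m$ steps once $m<n/3$. This is exactly where the hypothesis $n=6k$ is used --- it makes $n/3=2k$ an integer, so the transition from the regime $m<n/3$ (where $[z]\neq0$) to $m\ge n/3$ (where $z$ bounds) occurs precisely at the threshold $\epsilon=\sqrt3$. A secondary, purely bookkeeping, point is to keep track of the half-open convention in Definition~\ref{not:barcode} so that, with $\mathcal{R}_\epsilon$ defined by strict inequality, the birth is recorded at $2\sin\tfrac{\pi}{n}$ and the death at exactly $\sqrt3$.
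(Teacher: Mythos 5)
Your proposal is correct, and it takes a genuinely different --- and in fact more complete --- route than the paper's own argument. The paper's proof (i) identifies the fundamental cycle $z=[0,1]+[1,2]+\cdots+[6k-1,0]$ with the inscribed hexagon cycle $[0,k]+[k,2k]+\cdots+[5k,0]$ modulo boundaries via a fan/telescoping computation, and then (ii) appeals to Lemma~\ref{hexagon} to conclude that this hexagon cycle does not bound until $\epsilon$ reaches $\sqrt{3}$. Step (ii) is delicate: Lemma~\ref{hexagon} only shows the hexagon cycle is non-bounding in the Vietoris--Rips complex of the six hexagon vertices, which is a \emph{subcomplex} of $\mathcal{R}_{\epsilon}(P_n)$, so non-bounding there does not by itself rule out a filling that uses the other $6k-6$ vertices; moreover the paper nowhere argues that $\dim H_1 \le 1$, i.e.\ that the single bar it exhibits is the entire one-dimensional barcode. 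Your two new ingredients address exactly these points: the reduction $C_1 = B_1 + \left\langle \text{length-one edges} \right\rangle$ pins $H_1$ down to the span of $[z]$ (hence at most one bar), and the crossing-number cocycle certifies $[z]\neq 0$ in the \emph{full} complex for all $m<n/3$, replacing the hexagon lemma entirely. Your filling for $m\ge 2k$ is essentially the paper's computation of $\partial t$ merged with the hexagon's own filling, so that part coincides. The two geometric facts you flag as the main obstacles are both true and routine: the short arc is well defined since every edge has cyclic length at most $m<n/2$; and for the no-wraparound claim, of the three arcs into which a triangle's vertices cut the cycle at least one has length at least $n/3>m$, and the pairwise-distance condition on that arc's endpoints then forces the other two arcs to have total length at most $m$. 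So the proposal is sound, and it supplies precisely the rigor the appendix's argument leaves implicit.
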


\begin{proof}
Clearly, $[0,1]+[1,2]+\cdots+[6k-1,0]$ is a $1$-cycle and $0$-boundary. So the birth time of this cycle and the death time of $0$-dim cycle are equal to the length of the side of $P_T$ ($= 2\sin {\pi \over T}$).
\begin{figure}[h]
    \centering
    \includegraphics[width=0.25\textwidth]{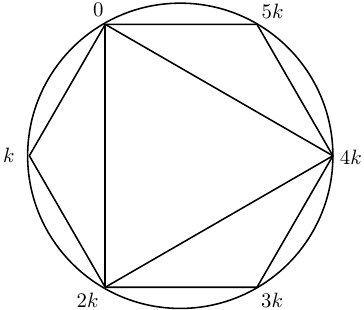}
    \caption{Hexagon used in the proof}
    \label{hexagon_picture}
\end{figure}

To calculate the death time of one-dimensional barcode, we use  Lemma \ref{hexagon}. Consider the time parameter bigger than the length of $[0,k]$. We can easily check that $$\partial ([0,1,k]+[1,2,k]+\cdots+[k-2,k-1,k]) = [0,1]+[1,2]+\cdots+[k-1,k]-[0,k].$$
This implies that 
$$\partial S  =  [0,1]+[1,2]+\cdots+[6k-1,6k] - ([0,k] + [k,2k] + \cdots + [5k,0])$$
where $S = [0,1,k]+[1,2,k]+\cdots+[k-2,k-1,k] + [k,k+1,2k] + [k+1,k+2,2k]+\cdots+[2k-2,2k-1,k] + \cdots $$
$$+ [6k-2,6k-1,6k]$.
Note that the cycle $[0,1]+[1,2]+\cdots+[6k-1,6k]$ is a boundary if and only if $[0,k] + [k,2k] + \cdots + [5k,0]$ is a boundary. 
In  Lemma \ref{hexagon}, We already proved that the cycle $[0,k] + [k,2k] + \cdots + [5k,0]$ is not a boundary (i.e. the death) until the time becomes the length of $[0,2k]$. Therefore the death time is $\sqrt{3}$.
\end{proof}

\begin{corollary}
\label{circle complete}
$\mathsf{bcd}_{0}^{\Rip}(\mathbb{S}^{1}) = \left\{\left(0, \infty \right)\right\}$ and $\mathsf{bcd}_{1}^{\Rip}\left(\mathbb{S}^{1}\right) = \left\{\left(0, \sqrt{3} \right] \right\}$.
\end{corollary}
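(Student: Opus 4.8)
The plan is to deduce Corollary \ref{circle complete} from Theorem \ref{6k barcode} by letting $k\to\infty$. Realize the regular $6k$-gon $P_{6k}$ as the set of its $6k$ vertices inscribed in the unit circle, so that $P_{6k}\subset\mathbb{S}^1$ inherits the Euclidean metric. Every point of $\mathbb{S}^1$ lies within angular distance $\tfrac{\pi}{6k}$ of some vertex, hence within Euclidean distance $2\sin\tfrac{\pi}{12k}$ of $P_{6k}$; therefore the Hausdorff distance between $P_{6k}$ and $\mathbb{S}^1$ inside $\mathbb{R}^2$, and a fortiori the Gromov--Hausdorff distance $d_{GH}(P_{6k},\mathbb{S}^1)$, tends to $0$ as $k\to\infty$. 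Since $P_{6k}$ is finite and $\mathbb{S}^1$ is compact, both are totally bounded, so Proposition \ref{app:stability theorem} applies and gives
$$
d_B\!\left(\mathsf{bcd}_j^{\mathcal{R}}(P_{6k}),\ \mathsf{bcd}_j^{\mathcal{R}}(\mathbb{S}^1)\right)\ \le\ 2\,d_{GH}(P_{6k},\mathbb{S}^1)\ \longrightarrow\ 0\qquad(k\to\infty),\ \ j=0,1.
$$

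Next I would identify the bottleneck limit of the left-hand diagrams using Theorem \ref{6k barcode}. In degree $0$, $\mathsf{bcd}_0^{\mathcal{R}}(P_{6k})=\left\{(0,\infty),\ \left(0,2\sin\tfrac{\pi}{6k}\right]_{(6k-1)}\right\}$; matching $(0,\infty)$ with $(0,\infty)$ and sending each short interval to the diagonal shows $d_B\big(\mathsf{bcd}_0^{\mathcal{R}}(P_{6k}),\{(0,\infty)\}\big)\le\sin\tfrac{\pi}{6k}\to 0$. In degree $1$, $\mathsf{bcd}_1^{\mathcal{R}}(P_{6k})=\left\{\left(2\sin\tfrac{\pi}{6k},\sqrt3\right]\right\}$, and pairing its single point with $(0,\sqrt3]$ shows $d_B\big(\mathsf{bcd}_1^{\mathcal{R}}(P_{6k}),\{(0,\sqrt3]\}\big)\le 2\sin\tfrac{\pi}{6k}\to 0$. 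Combining these with the stability estimate via the triangle inequality for $d_B$ yields $d_B\big(\mathsf{bcd}_0^{\mathcal{R}}(\mathbb{S}^1),\{(0,\infty)\}\big)=0$ and $d_B\big(\mathsf{bcd}_1^{\mathcal{R}}(\mathbb{S}^1),\{(0,\sqrt3]\}\big)=0$. Since the bottleneck distance is a genuine metric on the persistence diagrams occurring here — the tameness setting already underlying Proposition \ref{app:stability theorem} — these identities force $\mathsf{bcd}_0^{\mathcal{R}}(\mathbb{S}^1)=\{(0,\infty)\}$ and $\mathsf{bcd}_1^{\mathcal{R}}(\mathbb{S}^1)=\{(0,\sqrt3]\}$, which is the claim.

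The argument is essentially soft once Theorem \ref{6k barcode} is in hand; the only points requiring care are the conventions in the two bottleneck-matching computations — in particular that, for $k$ large, matching the $1$-dimensional point $\left(2\sin\tfrac{\pi}{6k},\sqrt3\right)$ to the diagonal costs roughly $\sqrt3/2$ and is therefore never optimal, so the limiting $1$-dimensional diagram of $\mathbb{S}^1$ must genuinely contain a bar converging to $(0,\sqrt3]$ rather than be empty — and the mild structural fact that a vanishing bottleneck distance between a finite diagram and $\mathsf{bcd}_j^{\mathcal{R}}(\mathbb{S}^1)$ upgrades to equality, which presupposes that the persistence module of $\mathcal{R}_\bullet(\mathbb{S}^1)$ is tame enough for its diagram to be well defined in the first place. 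I expect this last point to be the main (and, effectively, only) subtlety, and it is already implicit in the stability framework used throughout the paper.
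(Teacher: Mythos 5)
Your proposal is correct and follows essentially the same route as the paper's own proof: approximate $\mathbb{S}^1$ by the regular $6k$-gon, invoke Theorem \ref{6k barcode} together with the stability estimate of Proposition \ref{app:stability theorem}, and let $k\to\infty$. The paper states this in two lines, and your additional care with the explicit bottleneck matchings and the ``zero distance implies equality'' step merely fills in details the paper leaves implicit.
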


\begin{proof}
By Proposition \ref{app:stability theorem}, $d_B(\mathsf{bcd}_{0}^{\Rip}(P_T), \mathsf{bcd}_{0}^{\Rip}(\mathbb{S}^{1})) \le 2 \cdot d_{GH}	\left(P_T, \mathbb{S}^{1}\right)$ 
and $2 \cdot d_{GH}	\left(P_T, \mathbb{S}^{1}\right)\rightarrow 0$ as $T \rightarrow \infty$. The one-dimensional case can be proved in a similar way.
\end{proof}

\section{Direction Vectors and Endpoints Used in Example \ref{example 4}}
\label{appendix D}

\begin{eqnarray}
{\mathbf{a}_1}  &=&  (1.7, 1.8, 1.6, 1.9, 1.9, 1.3, 2.0, 1.6, 2.0, 1.0, 1.4, 1.4, 1.9, 1.5, 1.9, 2.0, 1.4, 1.4, 1.0, 1.8,  \nonumber \\ && 1.5, 1.5, 1.1, 1.5, 1.2, 1.8, 1.0, 1.6, 1.3, 1.7, 1.1, 1.2, 1.7, 1.5, 1.3, 1.8, 1.5, 1.2, 1.9, 1.8,  \nonumber \\ && 2.0, 1.5, 1.1, 1.1, 1.1, 1.6, 1.2, 2.0, 1.4, 1.8, 1.5, 1.1, 1.6, 1.8, 1.2, 1.4, 1.6, 1.7, 1.6, 1.3, 
\nonumber \\ && 1.6, 1.6, 1.8, 1.2, 1.1, 1.8, 1.7, 1.8, 1.0, 1.3, 1.8, 1.9, 1.8, 1.5, 1.3, 1.3, 1.1, 1.4, 1.2, 1.6, 
\nonumber \\ && 1.1, 1.6, 1.5, 1.2, 1.2, 1.4, 1.4, 1.5, 1.6, 1.4, 1.4, 1.1, 1.8, 1.3, 1.3, 1.5, 1.7, 1.3, 1.4, 1.5) \nonumber \\
{\mathbf{b}_1} & = & (4.7, 0.0, 1.1, 1.1, 2.0, 0.5, 3.4, 3.3, 0.6, 0.6, 0.6, 1.5, 2.8, 1.3, 4.7, 2.7, 4.7, 0.3, 0.7, 4.2, \nonumber \\ && 2.3, 1.4, 4.4, 3.9, 4.5, 0.2, 0.7, 2.7, 2.0, 1.1, 4.3, 0.3, 1.2, 1.7, 4.9, 3.8, 0.4, 1.8, 4.8, 2.3, \nonumber \\ && 4.4, 4.3, 4.7, 4.6, 2.3, 1.9, 1.5, 0.8, 0.8, 0.1, 1.3, 2.9, 2.8, 0.9, 0.8, 4.8, 4.1, 3.8, 2.8, 3.6, 
\nonumber \\ && 0.5, 2.1, 3.4, 2.0, 0.6, 2.8, 2.4, 3.5, 1.4, 0.7, 0.8, 3.3, 2.1, 3.7, 2.6, 2.8, 1.4, 0.4, 0.2, 3.3,
\nonumber \\ && 0.4, 4.6, 2.5, 3.5, 2.5, 2.5, 1.9, 1.6, 2.2, 0.3, 0.8, 1.7, 1.2, 0.4, 1.7, 4.4, 3.1, 2.3, 0.7, 4.0) \nonumber \\
{\mathbf{a}_2}  &=&  (1.7, 1.8, 1.6, 1.9, 1.9, 1.3, 2.0, 1.6, 2.0, 1.0, 1.4, 1.4, 1.9, 1.5, 1.9, 2.0, 1.4, 1.4, 1.0, 1.8,  \nonumber \\ && 1.5, 1.5, 1.1, 1.5, 1.2, 1.8, 1.0, 1.6, 1.3, 1.7, 1.1, 1.2, 1.7, 1.5, 1.3, 1.8, 1.5, 1.2, 1.9, 1.8, \nonumber \\ && 2.0, 1.5, 1.1, 1.1, 1.1, 1.6, 1.2, 2.0, 1.4, 1.8, 1.5, 1.1, 1.6, 1.8, 1.2, 1.4, 1.6, 1.7, 1.6, 1.3, 
\nonumber \\ && 1.6, 1.6, 1.8, 1.2, 1.1, 1.8, 1.7, 1.8, 1.0, 1.3, 1.8, 1.9, 1.8, 1.5, 1.3, 1.3, 1.1, 1.4, 1.2, 1.6, 
\nonumber \\ && 1.1, 1.6, 1.5, 1.2, 1.2, 1.4, 1.4, 1.5, 1.6, 1.4, 1.4, 1.1, 1.8, 1.3, 1.3, 1.5, 1.7, 1.3, 1.4, 1.5) \nonumber
\end{eqnarray}

\begin{eqnarray}
{\mathbf{b}_2} & = & (4.7, 0.0, 1.1, 1.1, 2.0, 0.5, 3.4, 3.3, 0.6, 0.6, 0.6, 1.5, 2.8, 1.3, 4.7, 2.7, 4.7, 0.3, 0.7, 4.2, 
\nonumber \\ && 2.3, 1.4, 4.4, 3.9, 4.5, 0.2, 0.7, 2.7, 2.0, 1.1, 4.3, 0.3, 1.2, 1.7, 4.9, 3.8, 0.4, 1.8, 4.8, 2.3, \nonumber \\ && 4.4, 4.3, 4.7, 4.6, 2.3, 1.9, 1.5, 0.8, 0.8, 0.1, 1.3, 2.9, 2.8, 0.9, 0.8, 4.8, 4.1, 3.8, 2.8, 3.6, 
\nonumber \\ && 0.5, 2.1, 3.4, 2.0, 0.6, 2.8, 2.4, 3.5, 1.4, 0.7, 0.8, 3.3, 2.1, 3.7, 2.6, 2.8, 1.4, 0.4, 0.2, 3.3,
\nonumber \\ && 0.4, 4.6, 2.5, 3.5, 2.5, 2.5, 1.9, 1.6, 2.2, 0.3, 0.8, 1.7, 1.2, 0.4, 1.7, 4.4, 3.1, 2.3, 0.7, 4.0) \nonumber \\
{\mathbf{a}_3}  &=&  (2.0, 1.2, 1.5, 1.9, 1.6, 1.6, 1.0, 1.7, 1.1, 1.9, 1.6, 1.1, 1.1, 1.7, 1.2, 2.0, 1.6, 1.7, 1.0, 1.8,  \nonumber \\ && 1.7, 1.5, 1.9, 1.8, 1.0, 1.1, 1.7, 1.4, 1.7, 1.9, 1.5, 1.9, 1.3, 1.7, 1.8, 1.7, 1.8, 1.7, 1.8, 1.1, 
\nonumber \\ && 1.7, 1.2, 1.4, 2.0, 1.9, 1.8, 1.6, 1.9, 1.3, 1.5, 1.2, 1.2, 1.8, 1.7, 1.9, 1.5, 1.1, 1.8, 1.3, 1.8, 
\nonumber \\ && 1.8, 1.2, 1.9, 1.4, 1.0, 1.2, 1.4, 1.0, 1.7, 2.0, 1.4, 1.0, 1.5, 1.7, 1.9, 1.9, 1.8, 1.5, 1.1, 2.0,
\nonumber \\ && 1.2, 1.4, 1.5, 1.1, 1.2, 1.7, 1.1, 1.1, 1.3, 1.8, 1.6, 1.7, 1.9, 1.9, 1.2, 1.8, 1.8, 1.5, 1.1, 1.4) \nonumber \\
{\mathbf{b}_3} & = & (3.1, 0.2, 2.3, 0.3, 4.9, 0.2, 1.2, 0.2, 4.2, 5.0, 2.4, 2.4, 2.0, 2.3, 4.6, 4.2, 3.3, 1.3, 1.0, 2.9, \nonumber \\ && 3.4, 3.9, 4.3, 0.5, 2.2, 0.8, 2.0, 2.0, 2.5, 3.7, 4.5, 0.4, 4.4, 4.8, 0.6, 4.3, 4.3, 4.1, 3.7, 3.6,  \nonumber \\ &&2.8, 3.4, 3.5, 0.2, 3.8, 2.2, 3.5, 3.9, 4.0, 4.1, 0.0, 2.0, 3.4, 4.0, 0.3, 3.1, 4.2, 4.3, 1.7, 4.6, 
\nonumber \\ && 2.1, 2.8, 4.3, 3.0, 1.5, 3.9, 1.0, 4.3, 3.4, 3.2, 0.2, 0.1, 4.8, 4.6, 3.9, 3.2, 4.4, 4.5, 0.3, 1.8, 
\nonumber \\ && 1.0, 0.4, 3.7, 3.0, 2.8, 2.2, 0.5, 0.2, 2.3, 4.5, 4.7, 2.0, 0.6, 4.7, 3.1, 5.0, 1.1, 3.4, 2.5, 3.7) \nonumber \\
{\mathbf{a}_4}  &=&  (1.4, 1.9, 1.5, 1.8, 1.4, 1.6, 1.4, 1.3, 1.6, 1.7, 1.9, 1.2, 1.7, 1.7, 1.9, 1.2, 1.9, 1.1, 1.4, 1.9, 
\nonumber \\ && 1.5, 1.3, 1.5, 1.5, 1.6, 1.5, 1.9, 1.4, 1.3, 1.9, 1.9, 1.5, 1.8, 1.2, 1.5, 1.8, 1.8, 1.1, 1.6, 1.5, 
\nonumber \\ && 1.5, 1.7, 1.3, 1.1, 1.9, 1.6, 2.0, 1.0, 1.4, 1.4, 1.9, 1.3, 1.5, 1.8, 1.3, 1.5, 1.4, 1.5, 1.9, 1.3, 
\nonumber \\ && 1.6, 2.0, 1.7, 1.9, 1.2, 1.7, 1.5, 1.4, 1.1, 1.2, 1.1, 1.8, 1.9, 1.6, 1.2, 1.0, 1.6, 1.0, 1.6, 1.3, 
\nonumber \\ && 1.3, 1.4, 1.9, 1.2, 1.0, 1.9, 1.3, 1.9, 1.1, 1.4, 1.7, 1.6, 1.2, 1.3, 1.6, 2.0, 1.9, 1.4, 1.1, 1.5)
\nonumber \\
{\mathbf{b}_4} & = & (1.2, 2.5, 2.7, 2.6, 1.4, 0.8, 1.5, 3.7, 4.0, 0.7, 3.7, 3.3, 0.7, 3.8, 0.4, 0.9, 1.5, 1.3, 0.2, 1.5, 
\nonumber \\ && 2.4, 1.2, 2.8, 3.4, 2.8, 1.2, 2.4, 0.6, 3.2, 0.5, 2.7, 3.1, 2.7, 4.2, 4.0, 0.3, 2.2, 2.5, 2.5, 3.5, 
\nonumber \\ && 1.9, 4.6, 4.2, 3.9, 3.2, 1.7, 4.5, 1.0, 3.7, 2.5, 1.8, 4.9, 1.8, 3.7, 5.0, 4.7, 4.3, 0.4, 2.2, 2.9, 
\nonumber \\ && 1.1, 1.7, 0.6, 4.2, 4.0, 3.5, 1.3, 4.9, 3.4, 0.6, 1.7, 2.1, 3.8, 5.0, 5.0, 4.1, 1.5, 4.4, 2.9, 3.2, 
\nonumber \\ && 3.3, 4.8, 1.1, 3.0, 4.3, 2.9, 0.7, 4.8, 4.8, 4.2, 4.4, 2.6, 2.6, 2.2, 2.3, 2.0, 3.0, 0.5, 1.7, 0.7)
\nonumber 
\end{eqnarray}

\end{appendices}

\vskip .1in
\normalem
\bibliographystyle{abbrv} 
\nocite*{}
\bibliography{reference}
\end{document}